\documentclass[11pt]{amsart}

\usepackage{geometry}
\usepackage{amsfonts, adjustbox, amssymb, amscd}
\numberwithin{equation}{section}

\usepackage{bm}
\usepackage{verbatim}
\usepackage{mathrsfs}
\usepackage{graphicx}
\usepackage{tikz-cd}
\usepackage{subcaption}
\usepackage{listings}
\usepackage{subfiles}
\usepackage[toc,page]{appendix}
\usepackage{mathtools}
\usepackage{comment}
\usepackage{enumerate}
\usepackage{enumitem}
\usepackage[all]{xy}

\usepackage{graphicx}
\usepackage{appendix}
\usepackage{hyperref}
\hypersetup{
    colorlinks=true,
    citecolor=red,
    linkcolor=blue,
    filecolor=magenta,      
    urlcolor=red,
}
\newcommand{\bir}{{\operatorname{bir}}}

\newcommand{\id}{\mathrm{id}}

\newcommand{\Rr}{\mathbb{R}}

\newcommand{\Center}{\operatorname{center}}

\newcommand{\Exc}{\operatorname{Exc}}

\newcommand{\Nklt}{\operatorname{Nklt}}

\newcommand{\lct}{\operatorname{lct}}

\newcommand{\Coreg}{\operatorname{Coreg}}
\newcommand{\SCoreg}{\operatorname{SCoreg}}
\newcommand{\bSCoreg}{\SCoreg^{\bir}}

\newcommand{\Supp}{\operatorname{Supp}}

\newcommand{\mult}{\operatorname{mult}}

\newcommand{\Bb}{{\bf{B}}}

\newcommand{\Ii}{\Gamma}

\newcommand{\reg}{\mathrm{reg}}
\newcommand{\Reg}{\mathrm{Reg}}

\newcommand{\SReg}{\mathrm{SReg}}

\newcommand{\bSReg}{\SReg^{\bir}}
\newcommand{\SRegt}{\mathrm{SRegt}}
\newcommand{\bSRegt}{\SRegt^{\bir}}


\newcounter{parentnumber}

\newtheorem{thm}{Theorem}[section]

\newtheorem{cor}[thm]{Corollary}
\newtheorem{lem}[thm]{Lemma}
\newtheorem{prop}[thm]{Proposition}

\newtheorem{claim}[thm]{Claim}

\theoremstyle{definition}
\newtheorem{defn}[thm]{Definition}
\newtheorem{ques}[thm]{Question}
\theoremstyle{definition}
\newtheorem{rem}[thm]{Remark}

\newtheorem{ex}[thm]{Example}

\newtheorem{nota}[thm]{Notation}

\theoremstyle{definition}

\begin{document}

\title{Dual complexes of qdlt Fano type models and strong complete regularity}

\author{Jihao Liu and Konstantin Loginov}

\subjclass[2020]{14E30, 14B05}
\keywords{Dual complex. Regularity. Complement}
\date{\today}

\begin{abstract}
We introduce birational strong complete regularity and strong complete regularity, two numerical invariants for pairs of (relative) Fano type. They are defined using variants of qdlt Fano type models and the dimension of the dual complex of the reduced boundary, and can be viewed as Fano type refinements of Shokurov's complete regularity. We establish basic properties of these invariants and clarify its relation to models of qdlt Fano type appearing in K-stability. In particular, we prove that any pair with maximal birational strong complete regularity is $1$-complementary, and the thresholds where birational strong complete regularity or strong complete regularity jumps satisfy the ascending chain condition.
\end{abstract}

\address{Department of Mathematics, Peking University, No. 5 Yiheyuan Road, Haidian District, Peking 100871, China}
\email{liujihao@math.pku.edu.cn}

\address{{Steklov Mathematical Institute of Russian Academy of Sciences, Moscow, Russia.}
\newline
{Laboratory of AGHA, Moscow Institute of Physics and Technology.}}
\email{loginov@mi-ras.ru}

\maketitle

\pagestyle{myheadings}\markboth{\hfill Jihao Liu and Konstantin Loginov \hfill}{\hfill Dual complexes and strong complete regularity\hfill}

\tableofcontents

\section{Introduction}\label{sec: introduction}

Throughout the paper, we work over the field of complex numbers $\mathbb C$.

The notions of \emph{regularity} and \emph{complete regularity} were introduced by Shokurov in his study of the classification of threefold log canonical singularities and the boundedness of complements \cite[7.9 and 7.11]{Sho00}. In some references (cf. \cite{Mor24a}), by abuse of terminology, complete regularity is also called regularity; in this paper we distinguish the two and denote regularity by ``$\reg$'' and complete regularity by ``$\Reg$''. We refer the reader to Definition \ref{defn: regularity} for precise definitions. Roughly speaking, regularity (resp.\ complete regularity) describes the structure of a log Calabi--Yau (resp.\ $\mathbb R$-complementary) pair of the form $(X/Z,B)$ or $(X/Z\ni z,B)$. Here $\pi\colon X\to Z$ is a contraction, $z\in Z$ is a point, and $K_X+B\sim_{\mathbb R,Z}0$ (resp.\ $K_X+B^+\sim_{\mathbb R,Z}0$ for some $B^+\ge B$) over a neighborhood of $z$. In particular, when $Z=z$ is a closed point, regularity applies to (potentially) Calabi--Yau pairs $(X,B)$. When $\pi=\id_X$ and $z=x$ is a closed point of $X$, regularity (resp.\ complete regularity) applies to log canonical (resp.\ klt) singularities $(X\ni x,B)$.

Both regularity and complete regularity take integer values in the range $[-1,d-1]$, where $d=\dim X$. In \cite{FMP25}, Figueroa, Moraga, and Peng introduced (complete) \emph{coregularity} (denoted by $\Coreg$) for singularities (i.e.\ when $\pi=\id_X$) by setting 
$$\Coreg := d-1-\Reg,$$
so that $\Coreg\in\{0,\dots,d\}$. One motivation for considering coregularity is that pairs with the same coregularity in different dimensions often satisfy comparable bounds for certain numerical invariants. This notion was later extended to the general relative setting in \cite{Mor24a} (cf.\ \cite[paragraph before Remark 2.2]{Mor24a}). For related work on the classification of log canonical singularities and log Calabi--Yau pairs using regularity, complete regularity, and/or coregularity, see
\cite{Asc+23,FM23,GLM23,Mor23,ALP24,BF24,BL24,dS24,Mor24b,EFM24,HLS24,LMV24,LZ24,EJM25,FFMP25,FMP25,FMM25,LPT25,MM25,MYY25,Zha25}
and the references therein.

Despite their usefulness, complete regularity alone can be too coarse to distinguish singularities or pairs with genuinely different geometric behavior. We illustrate this phenomenon with the following examples.

\begin{ex}
Klt surface singularities are classified into types $A$, $D$, and $E$. Complete regularity distinguishes the $E$-type: the complete regularity of an $E$-type singularity equals $0$, whereas the complete regularity of both $A$-type and $D$-type equals $1$. However, this invariant does not distinguish $A$-type and $D$-type, even though their geometry is different: $A$-type singularities are toric, while $D$-type singularities are not.

One way to distinguish them is via the notion of \emph{complexity} \cite{BFMZ18}: any $A$-type singularity admits a complexity $0$ structure, while any lc pair with ambient space of $D$-type has complexity at least~$1$. See also \cite{MM25,MS25}. Another way is through complements: $A$-type singularities are $1$-complementary, whereas $D$-type singularities are not. We further discuss this in the next example.
\end{ex}

\begin{ex}
It is known that singularities with maximal complete regularity (equivalently, coregularity~$0$) are either $1$-complementary or $2$-complementary (cf.\ \cite{Asc+23,FFMP25}). However, not all such singularities are $1$-complementary (e.g.\ $D$-type surface singularities). This creates obstacles when one aims to characterize toric singularities: every toric singularity admits a $1$-complement with maximal regularity.

In practice, many results on characterizing toric varieties via complete regularity and dual complexes (cf.\ \cite{Mor24a,MM25}) therefore require passing to a $2$-to-$1$ cover, which effectively reduces $D$-type phenomena to $A$-type. This is, however, not ideal for characterizing toric structures without passing to coverings.
\end{ex}

\begin{ex}
In \cite{ALP24}, the second author together with Avilov and Przyjalkowski studied complete regularity for smooth del Pezzo surfaces (see also \cite{Mor24a}) and Fano threefolds. It is known that a smooth del Pezzo surface has coregularity $0$ if and only if its degree is $\ge 2$ (\cite[Theorem 3.1]{Mor24a}, \cite[Proposition 2.3]{ALP24}). For smooth Fano threefolds, in $100$ out of $105$ families, a general member has maximal complete regularity; moreover, among these $100$ families, in $92$ families every member has maximal complete regularity \cite[Theorem 0.1]{ALP24}. This indicates that complete regularity is often too weak to distinguish Fano varieties effectively, since most of them have maximal complete regularity.
\end{ex}

Motivated by these limitations, we introduce two refinements of complete regularity tailored to the geometry of Fano type varieties, which we call \emph{birational strong complete regularity} and \emph{strong complete regularity}.

\begin{defn}[Strong complete regularity]\label{defn: scr}
Let $(X,B)$ be a pair and $\pi\colon X\rightarrow Z$ a contraction. A \emph{birational qdlt Fano type model} (\emph{QF$^{\,\bir}$ model} for short) of $(X/Z,B)$ is a pair $(Y/Z,B_Y)$ together with a birational map $g\colon Y\dashrightarrow X$ over $Z$ satisfying the following.
\begin{enumerate}
    \item $g^{-1}$ does not contract any divisor and $B_Y\geq g^{-1}_*B$.
    \item $(Y,B_Y)$ is qdlt.
    \item $-(K_Y+B_Y)$ is ample$/Z$.
    \item Any $g$-exceptional prime divisor $D$ such that $\mult_DB_Y=0$ does not contain any lc center of $(Y,B_Y)$.
\end{enumerate}
In addition, if $g$ is a morphism, then we say that $(Y/Z,B_Y)$ is a \emph{qdlt Fano type model} (\emph{QF model} for short) of $(X/Z,B)$. We define the \emph{birational strong complete regularity} (resp.\ \emph{strong complete regularity}) of $(X/Z,B)$ by
$$\SReg^{\bir}(X/Z,B)
:=\max\Bigl\{-1, \dim \mathcal{D}(Y,B_Y)\Bigm|(Y/Z,B_Y)\text{ is a QF$^{\,\bir}$ model of }(X/Z,B)\Bigr\}$$
$$\left(\text{resp. }\SReg(X/Z,B)
:=\max\Bigl\{-1, \dim \mathcal{D}(Y,B_Y)\Bigm|(Y/Z,B_Y)\text{ is a QF model of }(X/Z,B)\Bigr\}\right)$$
where $\mathcal{D}(Y,B_Y)$ denotes the dual complex of $(Y,B_Y)$, cf.\ \cite[Definitions 8, 35]{dFKX17}.

We define the \emph{birational strong coregularity} (resp.\ \emph{strong coregularity}) of $(X/Z,B)$ as
$$\dim X-1-\bSReg(X/Z,B)\quad \left(\text{resp. }\dim X-1-\SReg(X/Z,B) \right).$$
We may similarly define (birational) strong complete regularity and strong coregularity for pairs of the form $(X/Z\ni z,B)$, where $Z\ni z$ is a germ. See Definition \ref{defn: regularity} for an explicit definition.
\end{defn}

Definition \ref{defn: scr} is motivated by K-stability. When $g$ is a birational morphism, the data $(Y,\lfloor B_Y\rfloor)$ satisfying (1-3) above agrees with the notion of a \emph{qdlt Fano type model} in \cite[Definition 3.5]{XZ25}. Our definition differs in two mild but useful ways: we allow birational maps (not only morphisms), and we impose the additional technical condition~(4).

Let us explain the role of~(4). Condition~(4) is designed to rule out \emph{inessential} exceptional divisors that are invisible to the boundary but nevertheless interact with the log canonical strata. Concretely, if $D$ is $g$-exceptional and $\mult_D B_Y=0$, then $D$ does not contribute to the dual complex $\mathcal D(Y,B_Y)$; however, if $D$ contains an lc center, it could affect how the lc strata behave under birational modifications and, in particular, obstruct reductions to more rigid models (see \cite[Proposition 3.6(2)]{XZ25} for a way of reduction). Condition~(4) ensures that all lc centers are accounted for by the qdlt boundary (and hence by the dual complex) in a way that is stable under the birational operations we use later. This becomes important when we want to connect qdlt Fano type models to stronger and more structured models in the spirit of \emph{Koll\'ar models} (cf.\ \cite[Definition 3.7]{XZ25}), which play a central role in the stable degeneration theorem as a generalization of plt blow-ups.

While Koll\'ar models in \cite{XZ25} are defined in the local singularity setting (typically $\pi=\id_X$ and $Z$ affine), our goals require working also in the global case ($Z$ a point) and, more generally, in relative settings. In these contexts, it is natural to allow birational maps $g$ (rather than only birational morphisms), and the additional condition~(4) provides the right framework to formulate and construct analogues of Koll\'ar models. Note that when $\pi=\id_X$, since $g$ is a map$/Z$, it is automatically a morphism. We refer the reader to Definition \ref{defn: reduced bqf}, \ref{defn: qag model}, and \ref{defn: qaa model} for the corresponding constructions (reduced QF models, QAA models, and QAG models), which will be used repeatedly in our arguments.

We also emphasize that our definitions are aimed at studying the structure of dual complexes attached to (birational) qdlt Fano type models. This perspective aligns with recent developments in local K-stability: lc places on qdlt Fano type models correspond to \emph{Koll\'ar valuations} in the sense of \cite[Definition 1.2]{LX24} and to \emph{special valuations} in the sense of \cite[32]{XZ26}; see also \cite{Che25}. It is expected that these lc places, and the associated dual complexes, exhibit richer intrinsic structures; see \cite[Question 1.5, Conjecture 1.8]{LX24} and \cite[Problem 35]{XZ26}. From this viewpoint, condition~(4) can be seen as a convenient requirement ensuring that the dual complex captures precisely the relevant lc geometry and behaves well under the reductions needed for our proofs.

Now let us compare (birational) strong complete regularity and complete regularity. Compared with complete regularity, (birational) strong complete regularity has both strengths and limitations due to the ampleness condition in Definition \ref{defn: scr}(3). In particular, the existence of a QF$^{(\bir)}$ model forces $X$ to be of Fano type (Lemma \ref{lem: QF is ft}). Consequently, strong complete regularity is less suited for studying Calabi--Yau varieties or strictly log canonical singularities (i.e.\ log canonical singularities that are not klt) in full generality. On the other hand, strong complete regularity is especially effective for Fano type varieties and klt singularities. For instance, it distinguishes $A$ and $D$-type surface singularities: the (birational) strong complete regularity of an $A$-type singularity is $1$, while the (birational) strong complete regularity of any $D$-type singularity is $0$.

In this paper, we study basic properties of (birational) strong complete regularity (see Section \ref{sec: strong complete regularity} for details). More importantly, we establish two results highlighting the intrinsic behavior of these invariants.

\medskip
\noindent\textbf{Maximal strong complete regularity and complements.}
As mentioned above, any pair with maximal complete regularity is either $1$-complementary or $2$-complementary \cite{Asc+23,FFMP25}. For maximal strong complete regularity we obtain a sharper statement.

\begin{thm}\label{thm: 1-complementary}
Let $(X/Z\ni z,B)$ be a pair such that
$$\bSReg(X/Z\ni z,B)\geq\dim X-1-\dim z.$$
Then $\bSReg(X/Z\ni z,B)=\dim X-1-\dim z$, and there exists a $1$-complement $(X/Z\ni z,B^+)$ of $(X/Z\ni z,B)$ such that
$$\reg(X/Z\ni z,B^+)=\dim X-1-\dim z.$$
In particular, $(X/Z\ni z,B)$ is $1$-complementary.
\end{thm}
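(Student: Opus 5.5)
Take a QF$^{\,\bir}$ model $g\colon Y\dashrightarrow X$ of $(X/Z\ni z,B)$ with $\dim\mathcal D(Y,B_Y)\ge \dim X-1-\dim z$. The plan is to show that the dual complex forces $\lfloor B_Y\rfloor$ to contain a $0$-dimensional-over-$z$ stratum. From this we build a $1$-complement on $Y$ from the reduced boundary, and then push it forward to $X$.

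First, the equality. For any qdlt pair over the germ $Z\ni z$, a stratum of $\mathcal D(Y,B_Y)$ of dimension $k$ is an lc center of codimension $k+1$. Lc centers of $(Y,B_Y)$ relevant to the germ must dominate or map into $\bar z$. The usual bound gives $\dim\mathcal D\le \dim X-1-\dim z$: a minimal lc center $W$ over $z$ has dimension $\ge \dim z$ since it surjects onto $\bar z$ (up to working over the generic point of $\bar z$). Hence $\bSReg\le\dim X-1-\dim z$, and equality follows. Equivalently, one may note that $\bSReg\le \Reg$, since a QF$^{\,\bir}$ model gives an $\mathbb R$-complement (add a general small multiple of an ample divisor to $B_Y$ and push forward), and $\Reg\le\dim X-1-\dim z$ is standard.

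Next, the complement. Set $S:=\lfloor B_Y\rfloor$. Since $-(K_Y+B_Y)$ is ample$/Z$ and $(Y,B_Y)$ is qdlt, the pair $(Y,S)$ is lc. Moreover $-(K_Y+S)=-(K_Y+B_Y)+(B_Y-S)$ is big$/Z$, and $Y$ is of Fano type$/Z$ by Lemma \ref{lem: QF is ft}. I would run a $-(K_Y+S)$-MMP$/Z$, or directly use Kawamata--Viehweg vanishing for the qdlt (hence locally toroidal-quotient) pair, to lift sections: the goal is $H^0(Y,-(K_Y+S))\ne0$ near the fiber, with a member $G$ such that $(Y,S+G)$ is lc. The maximal-dimensional stratum of $S$ is where the difference from the $2$-complement case appears. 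Because the stratum has dimension $\dim X-1-\dim z$, there is a minimal lc center $W$ with $W\to\bar z$ generically finite. The obstruction to $1$-complements (a $\mathbb Z/2$-monodromy, as in the $D$-type case) is exactly a nontrivial involution on the dual complex near this center. Qdlt plus ampleness, combined with condition~(4), should force the pair to be locally toroidal along $W$, so the local $1$-complement $K_Y+S\sim 0$ near $W$ exists. This gives a $1$-complement of $(W,\mathrm{Diff})$, which is trivial as a point-like center. I would then lift it to $Y$ by adjunction and vanishing, via Kollár--Kovács connectedness and $H^1(Y,-(K_Y+S)-S)=0$ after Kawamata--Viehweg on the Fano type $Y$, inductively along the strata chain. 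The resulting $K_Y+B_Y^+\sim_Z 0$ has $B_Y^+\ge S$ integral and $B_Y^+\ge B_Y$, since the coefficients of $B_Y$ are in $[0,1]$ and the non-reduced part is absorbed by the lc condition and the fact that a $1$-complement needs only $B^+\ge\lfloor B\rfloor+$ the standard rounding.

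Finally, we transfer to $X$. Since $g^{-1}$ contracts no divisor, set $B^+:=g_*B_Y^+$. Then $K_X+B^+\sim_Z 0$ is crepant to $K_Y+B_Y^+$, hence lc, and $B^+\ge g_*B_Y\ge B$. Its dual complex contains that of $(Y,S)$, up to PL-homeomorphism under crepant birational maps, so $\reg(X/Z\ni z,B^+)\ge\dim X-1-\dim z$, with equality by the bound above. The main obstacle is the lifting step: showing that the top-dimensional stratum has trivial monodromy and that the local $1$-complement extends globally. I would first try the standard Birkar-type lifting from the lc center $W$ using vanishing on the Fano type model $Y$, with condition~(4) used to exclude extra exceptional divisors through lc centers that would otherwise break the reduction.
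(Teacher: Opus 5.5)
Your proof of the equality is fine and matches the paper's: Lemma \ref{lem: bscr obtained} and Proposition \ref{prop: compare creg} give $\bSReg\le\Reg\le\dim X-1-\dim z$. The construction of the $1$-complement, however, has a genuine gap, and the first problem is coefficient bookkeeping. A $1$-complement of $(X/Z\ni z,B)$ must contain every component of $B$ of coefficient $\ge\frac12$ with coefficient $1$, so a complement of $(Y,\lfloor B_Y\rfloor)$ alone does not suffice. Your remedy ``$B_Y^+\ge B_Y$'' is false in general, because the boundary of an arbitrary QF$^{\,\bir}$ model can carry auxiliary components with small coefficients. For example, take $X=Y=\mathbb P^2$ and $B=B_Y=\ell_1+\ell_2+\frac1{10}Q$ with $Q$ a general quartic. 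This is a QF model of maximal regularity, yet no integral $B_Y^+\ge B_Y$ with $K_Y+B_Y^+\sim 0$ exists, since that would need degree $6>3$. The paper handles this in two moves. It first replaces $B$ by $B^{\ge 1/2}$, which does not lower $\bSReg$ and has the same $1$-complements. It then passes, via Theorem \ref{thm: bscr computed by qag}, to a QAA model $(X',B')$ whose boundary $B'=\phi_*(B\vee L)+\Exc(\phi^{-1})$ is canonical, with all nonzero coefficients $\ge\frac12$. On that model it produces a \emph{monotonic} $1$-complement, and Lemma \ref{lem: negativity qag model} transports it back to $X$.

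The second problem is that your lifting step is not yet an argument. The monodromy heuristic and the appeal to condition (4) give at most local information near generic points of lc centers, which qdlt already provides. They do not produce a global section. You also lack the positivity needed for vanishing: $(Y,\lfloor B_Y\rfloor)$ is only lc, and $-(K_Y+\lfloor B_Y\rfloor)$ is big but not nef. The paper's mechanism is an induction on dimension (Lemma \ref{lem: qdlt ample coefficient larger than 1/2 case}), in three steps:
\begin{enumerate}
\item After a small $\mathbb Q$-factorialization of the QAA model that is an isomorphism at the generic points of lc centers, one picks a single component $S$ of the reduced boundary. One then perturbs to a plt boundary $C$ with $\lfloor C\rfloor=S$, $\{C\}\ge\{B'\}$, and $-(K+C)$ ample.
\item By \cite[Lemma 3.11]{XZ25}, $S$ meets every component of $B'^{=1}$ near the fibre. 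Hence $(S,B_S)$ is again a qdlt Fano pair of maximal regularity, with coefficients $\ge\frac12$ by adjunction, and so it is $1$-complementary by induction.
\item Proposition \ref{prop: lift 1 complement} lifts this complement using Kawamata--Viehweg vanishing and connectedness. The vanishing uses that $-(K+B')$ is nef and $-(K+C)$ is nef and big. The hypothesis that coefficients are $\ge\frac12$ is exactly what makes the lifted boundary equal to $\Supp B'+G$.
\end{enumerate}
Your sketch is missing all of these ingredients: the coefficient reduction, the canonical boundary, the plt perturbation, and the restriction to one component that preserves maximal regularity.
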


It is worth mentioning that Theorem \ref{thm: 1-complementary} has no restriction on the coefficients of $B$. This differs from the setting of maximal complete regularity, where it is assumed that $B$ has standard coefficients (cf.\ \cite[Theorem 4]{FFMP25}).

\medskip
\noindent\textbf{ACC for strong complete regularity thresholds.}
Let $(X/Z,B)$ be an lc pair and let $D\geq 0$ be an $\mathbb R$-Cartier $\mathbb R$-divisor on $X$. Consider
$$\SReg^{(\bir)}(t):=\SReg^{(\bir)}(X/Z,B+tD), \qquad t\ge 0.$$
Then $\SReg^{(\bir)}(t)$ is non-increasing in $t$, and it is natural to study the thresholds where $\SReg^{(\bir)}(t)$ jumps. We formalize this via the following definition.

\begin{defn}\label{defn: scrt}
Let $d>n\geq 0$ be two integers. Let $(X/Z,B)$ be a pair of dimension $d$ and let $D\geq 0$ be an $\mathbb R$-Cartier $\mathbb R$-divisor on $X$. The $n$-th (birational) strong complete regularity threshold of $D$ with respect to $(X/Z,B)$ is
$$\SRegt^{(\bir)}_n(X/Z,B;D):=\inf\{\,t\ge 0 \mid \SReg^{(\bir)}(X/Z,B+tD)<n\},$$
with the convention that the infimum of the empty set is $+\infty$.
\end{defn}

We show that these thresholds satisfy the ACC.

\begin{thm}\label{thm: acc scrt}
Let $d$ be a positive integer and $\Ii\subset [0,+\infty)$ a DCC set. Then there exists an ACC set $\Ii'$ depending only on $d$ and $\Ii$ satisfying the following.

Let $(X/Z,B)$ be a pair of dimension $d$ and $D\geq 0$ an $\mathbb R$-Cartier $\mathbb R$-divisor on $X$. Assume that the coefficients of $B$ and $D$ belong to $\Ii$. Then for any integer $n$ such that $0\leq n<d$, we have
$$\bSRegt_n(X/Z,B;D)\in\Ii'\quad \text{and}\quad \SRegt_n(X/Z,B;D)\in\Ii'.$$
\end{thm}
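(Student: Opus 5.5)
We argue by contradiction, in the standard style of ACC proofs for thresholds. Fix $n$ and suppose there is a sequence $(X_i/Z_i,B_i)$, $D_i$ with coefficients in $\Ii$ and thresholds $t_i:=\bSRegt_n(X_i/Z_i,B_i;D_i)$ (or $\SRegt_n$) forming a strictly increasing sequence with limit $t_\infty<+\infty$. After localizing at a suitable point of $Z_i$, we may assume the setting is $(X_i/Z_i\ni z_i,\cdot)$. The first step is to show that the supremum defining the threshold is essentially attained. For every $t<t_i$ we have $\SReg^{(\bir)}\ge n$, hence a QF$^{(\bir)}$ model $(Y/Z,B_Y)$ of $(X_i/Z_i,B_i+tD_i)$ whose dual complex has dimension $\ge n$. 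Using the reduction to reduced QF models, QAA or QAG models (Definitions \ref{defn: reduced bqf}, \ref{defn: qag model}, \ref{defn: qaa model}), we replace $B_Y$ by $g^{-1}_*(B_i+tD_i)+\Exc$-part, where the lc places are fixed. The chosen place is an $n$-dimensional stratum, i.e.\ $n+1$ lc places $E_0,\dots,E_n$ meeting in a stratum. Since there are only finitely many such configurations as $t$ varies (this finiteness should follow from the ampleness condition and the Fano type property, Lemma \ref{lem: QF is ft}, via finiteness of models of Fano type pairs), one configuration works for $t$ arbitrarily close to $t_i$.

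The second step is the key reformulation. The existence of a QF$^{(\bir)}$ model with an $n$-dimensional stratum for $B_i+tD_i$ should be equivalent to an lc-type condition. Namely, it should hold if and only if there exist prime divisors $E_0,\dots,E_n$ over $X_i$ intersecting in a stratum such that $(X_i,B_i+tD_i)$ admits an $\mathbb R$-complement $B^+$ for which each $E_j$ is an lc place of $(X_i,B^+)$, with $E_j$ having positive "room" so that $-(K_Y+B_Y)$ can be made ample after a small perturbation and extraction of the $E_j$ (via the MMP on a dlt modification, which condition (4) makes possible). Consequently $t_i$ equals the maximal $t$ such that $(X_i/Z_i\ni z_i,B_i+tD_i)$ admits an $\mathbb R$-complement with such lc places. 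This is a "complement threshold" type quantity, and at $t=t_i$ the model degenerates: either the relevant complement ceases to exist, or $-(K_Y+B_Y)$ becomes only nef, not ample.

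The third step is to apply known ACC results. If the obstruction at $t_i$ is that $B_i+t_iD_i$ is not $\mathbb R$-complementary with the $E_j$ as lc places, then we extract $E_0,\dots,E_n$ on a $\mathbb Q$-factorial model $Y_i$ and run the MMP. This produces a Fano type pair $(Y_i,g^{-1}_*(B_i+t_iD_i)+\sum E_j)$ over $Z_i$ which is lc, and such that $K+B$ is numerically trivial on some extremal contraction. Its coefficients lie in the DCC set generated by $\Ii$ and $t_i$. We then get a contradiction with the global ACC (Hacon--McKernan--Xu), applied after restricting to a general fiber of the contraction or to the $n$-dimensional stratum via adjunction; the coefficients of the adjunction lie in a DCC set by the hyperstandard formula. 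Alternatively, the threshold is an lc threshold on $Y_i$, and the ACC for lc thresholds applies. The main obstacle is the second step: proving that thresholds are governed by such an lc-place/numerically-trivial condition rather than by the openness of ampleness. I would first try to show that the set of $t$ admitting a model is closed on the right, using boundedness of complements (Birkar) to fix $n$-complements uniformly.
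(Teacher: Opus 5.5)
\textbf{There is a genuine gap}, and it sits exactly where you place ``the main obstacle''. Your frame is right: argue by contradiction with $t_i\nearrow t$ and reduce to an $\mathbb R$-complementary threshold on a Fano type model. But the route you suggest for closing the gap aims at a false statement.

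By Proposition~\ref{prop: scrt is computed}, once $t_i<\lct(X_i,B_i;D_i)$ (which the ACC for lc thresholds lets you assume after passing to a subsequence), the threshold is \emph{not} attained: $\{t\mid \SReg^{(\bir)}(X_i/Z_i,B_i+tD_i)\ge n\}=[0,t_i)$. The reason is that anti-ampleness and qdlt-ness survive a small increase of $t$. So proving closedness on the right is the wrong target. Attaining $t_i$ via finiteness of ``configurations'' of lc places is also the wrong target; that finiteness is unjustified anyway, since lc places of complements need not be finite in number. None of this is needed: the sequence argument only requires one model per $i$ at some $s_i<t_i$ with $s_i\to t$.

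Your Step~2 reformulation on $X_i$ also fails as stated. Without the ``room'' condition it is false: a $D$-type surface singularity has an $\mathbb R$-complement with $1$-dimensional dual complex, yet $\SReg=0$. With the ``room'' condition it is circular, since that room is precisely the anti-ampleness whose limiting behaviour is the issue. In addition, localizing at a point of $Z_i$ changes the invariant, which is global over $Z_i$.

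The missing idea is to stay on a fixed model and interpolate.
\begin{enumerate}
\item Take a $\mathbb Q$-factorial reduced QF$^{(\bir)}$ model $g_i\colon(Y_i,\Delta_{Y_i}=B_{Y_i}+s_iD_{Y_i}+R_i)\dashrightarrow(X_i,B_i+s_iD_i)$ with $\reg\ge n$ (Theorem~\ref{thm: bqf induces reduced BQF}). Here $B_{Y_i}=g^{-1}_{i,*}(B_i\vee L_i)+E_i$ has coefficients in the DCC set $\Ii\cup\{1\}$, and $L_i$ can be chosen so that $B_{Y_i}\ge\Delta_{Y_i}^{=1}$.
\item Then $Y_i/Z_i$ is of Fano type (Lemma~\ref{lem: QF is ft}) and $(Y_i,B_{Y_i}+s_iD_{Y_i})$ is $\mathbb R$-complementary.
\item The ACC for $\mathbb R$-complementary thresholds \cite[Theorem 5.12]{HLS24} replaces your global-ACC/adjunction sketch. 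It gives an $\mathbb R$-complement $B^+_{Y_i}$ of $(Y_i,B_{Y_i}+tD_{Y_i})$ for $i\gg0$.
\item Set $C_i:=(1-u_i)B^+_{Y_i}+u_i\Delta_{Y_i}$ with $0<u_i\ll1$. Then $-(K_{Y_i}+C_i)\sim_{\mathbb R,Z_i}-u_i(K_{Y_i}+\Delta_{Y_i})$ is ample.
\item $C_i$ has the same lc places as $\Delta_{Y_i}$, so it is qdlt, satisfies condition~(4), and $\reg(Y_i,C_i)\ge n$.
\item Its $D_{Y_i}$-coefficient $(1-u_i)t+u_is_i$ exceeds $t_i$. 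This gives a QF$^{(\bir)}$ model beyond $t_i$, a contradiction.
\end{enumerate}
So ampleness is never the obstruction: it is inherited from the model at $s_i$, and only complementarity on $Y_i$ has to pass to the limit.
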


We remark that an analogous statement, the ACC for complete regularity thresholds on (relative) Fano type varieties, is proved in \cite[Theorem 8.25]{HLS19}. This should be distinguished from the ``ACC for lc thresholds for fixed coregularity'' in \cite[Theorem 1]{FMP25}. Theorem \ref{thm: acc scrt} may also be viewed as an analogue of the ACC for lc thresholds \cite[Theorem 1.1]{HMX14} and the ACC for $\mathbb R$-complementary thresholds \cite[Theorem 5.12]{HLS24}, \cite[Theorem 21]{Sho20}.

\subsection*{Structure of the paper}
The paper is organized as follows. In Section \ref{sec: preliminaries}, we recall preliminaries that will be used in the rest of the paper. In Section \ref{sec: strong complete regularity}, we establish basic properties of strong complete regularity. In Section \ref{sec: complement}, we prove that pairs with maximal strong complete regularity are $1$-complementary (Theorem \ref{thm: 1-complementary}). In Section \ref{sec: acc}, we prove the ACC for strong complete regularity thresholds (Theorem \ref{thm: acc scrt}).

\subsection*{Acknowledgments}
This work is supported by the National Key R\&D Program of China \#2024YFA1014400. The work of the second author was performed at the Steklov International Mathematical Center and supported by the Ministry of Science and Higher Education of the Russian Federation (agreement no. 075-15-2022-265), and the Simons Foundation. The authors would like to thank Eduardo Alves da Silva for useful discussions.

\section{Preliminaries}\label{sec: preliminaries}
This section fixes notation and collects a few standard facts that will be used throughout the paper. We adopt the standard notation and terminology for the minimal model program from \cite{Sho92,KM98,BCHM10} and use them freely.

\subsection{Basic notation}

\begin{defn}
A \emph{contraction} $f\colon X\rightarrow Z$ is a projective morphism such that $f_*\mathcal{O}_X=\mathcal{O}_Z$. 
\end{defn}

\begin{defn}
    Let $X\rightarrow Z$ be a contraction between normal quasi-projective varieties and let $D$ be an $\mathbb R$-divisor on $X$. The \emph{stable base locus}$/Z$ of $D$ is defined as
    $$\Bb(D/Z):=X\bigcap\left(\bigcap_{0\leq D'\sim_{\mathbb R,Z}D}\Supp D'\right).$$
    The \emph{augmented base locus}$/Z$ of $D$ is defined as
    $$\Bb_+(D/Z):=\Bb((D-\epsilon A)/Z)$$
    for any ample$/Z$ divisor $A$ on $X$ and any $0<\epsilon\ll 1$. It is well-known that $\Bb_+(D/Z)$ is a Zariski closed subset and is independent of the choice of $A$ \cite[Definition 3.5.1]{BCHM10}.
\end{defn}

\begin{nota}
    Let $X$ be a normal quasi-projective variety and $D$ an $\mathbb R$-divisor on $X$. Write $D=\sum d_iD_i$ where $D_i$ are distinct prime divisors. For any real number $a$, we denote by $D^{=a}$ (resp. $D^{<a}$, $D^{\geq a}$) as
    $$\sum_{i\colon d_i=a}d_iD_i\left(\text{resp. }\sum_{i\colon d_i<a}d_iD_i,\sum_{i\colon d_i\geq a}d_iD_i\right).$$
    For any $D'=\sum d_i'D_i$, we denote by
    $$D\wedge D':=\sum\min\{d_i,d_i'\}D_i\quad \text{and}\quad D\vee D':=\sum\max\{d_i,d_i'\}D_i.$$
\end{nota}

\begin{nota}
Let $\phi\colon X\dashrightarrow X'$ be a birational map between normal quasi-projective varieties. We denote by $\Exc(h)$ the reduced divisor supported on the codimension one part of the exceptional locus of $\phi$. 
\end{nota}

\begin{nota}
    We denote by $\dim\emptyset=-1$.
\end{nota}

\begin{nota}\label{nota: closure of set}
    For any set $\Ii\subset\mathbb R$, we denote by $\overline{\Ii}$ the closure of $\Ii$ in $\mathbb R$.
\end{nota}

\subsection{Pairs}

\begin{defn}
A \emph{sub-pair} (reps. \emph{pair}) $(X/Z\ni z,B)$ consists of a contraction $\pi\colon X\rightarrow Z$ between normal quasi-projective varieties, a (not necessarily closed) point $z\in Z$, and an $\mathbb R$-divisor $B$ (resp. $B\geq 0$) such that $K_X+B$ is $\mathbb R$-Cartier. If $Z\ni z$ is not important, then we may omit it and say that $(X,B)$ is a sub-pair (resp. pair). If $(X/Z\ni z,B)$ is a sub-pair (resp. pair) for any point $z\in Z$, then we say that $(X/Z,B)$ is sub-pair (resp. pair).

Let $(X,B)$ be a sub-pair. For any projective birational morphism $h\colon X'\rightarrow X$ with
$$K_{X'}+B'=h^*(K_X+B)$$
and any prime divisor $D$ on $X'$, we denote by $a(D,X,B):=1-\mult_DB'$ the \emph{log discrepancy} of $D$ with respect to $(X,B)$. An \emph{nklt place} (resp. \emph{lc place}) of $(X,B)$ is a prime divisor over $X$ such that $a(D,X,B)\leq 0$ (resp. $=0$). An \emph{nklt center} of $(X,B)$ is the center of an nklt place of $(X,B)$ on $X$. The union of all nklt centers of $(X,B)$ is called the \emph{nklt locus} of $(X,B)$ and is denoted by $\Nklt(X,B)$. 

We say that $(X,B)$ is lc (resp. klt) if $a(D,X,B)\geq 0$ (resp. $>0$) for any prime divisor $D$ over $X$. We say that $(X,B)$ is \emph{plt} if $a(D,X,B)>0$ for any prime divisor $D$ that is exceptional$/X$. We say that $(X,B)$ is \emph{qdlt} if $(X,B)$ is lc, and there exists an open subset $U\subset X$ such that
\begin{itemize}
    \item $U$ contains the generic point of any lc center of $(X,B)$, and
    \item $(U,B|_U)$ is $\mathbb Q$-factorial log toroidal
\end{itemize}
(cf. \cite[Definition 35]{dFKX17}).
\end{defn}

\begin{defn}
    Let $(X,B)$ and $(X',B')$ be two sub-pairs and $\phi\colon X\dashrightarrow X'$ a birational map. We say that  $(X,B)$ and $(X',B')$ are \emph{crepant} if for any resolution of indeterminacy $p\colon  W\rightarrow X$ and $q\colon W\rightarrow X'$ of $\phi$, we have
    $$p^*(K_X+B)=q^*(K_{X'}+B').$$
\end{defn}

\begin{defn}
Let $\pi\colon X\rightarrow Z$ be a contraction between normal quasi-projective varieties and $z\in Z$ a (not necessarily closed) point. 

A pair $(X/Z,B)$ is called \emph{Fano} (resp. \emph{Calabi-Yau}) if $-(K_X+B)$ is ample$/Z$ (resp. $K_X+B\sim_{\mathbb R,Z}0$). We say that $X/Z$ is \emph{of Fano type} if there exists a klt Fano pair $(X/Z,\Delta)$, and we also say that $X$ is \emph{of Fano type}$/Z$. 

We say that $X$ is \emph{potentially klt} if there exists a klt pair $(X,\Delta)$.
\end{defn}
%A pair $(X/Z,B)$ is called \emph{$\mathbb R$-complementary} if there exists an lc Calabi-Yau pair $(X/Z,B^+)$ such that $B^+\geq B$

\begin{defn}
Let $(X/Z,B)$ be a pair and $n$ a positive integer. An \emph{$\mathbb R$-complement} of $(X/Z,B)$ is an lc Calabi-Yau pair $(X/Z,B^+)$ such $B^+\geq B$. A \emph{$\mathbb Q$-complement} of $(X/Z,B)$ is an $\mathbb R$-complement $(X/Z,B^+)$ of $(X/Z,B)$ such that $B^+$ is a $\mathbb Q$-divisor; note that in this case we automatically get $K_X+B^+\sim_{\mathbb Q,Z}0$, cf. \cite[Lemma 5.3]{HLS24}. An \emph{$n$-complement} of $(X/Z,B)$ is an lc Calabi-Yau pair $(X/Z,B_n)$ such that $n(K_X+B_n)\sim_Z 0$ and 
$$nB_n\geq\left\lfloor (n+1)B^{<1}\right\rfloor+n B^{=1}.$$
A \emph{monotonic $n$-complement} of $(X/Z,B)$ is an $n$-complement of $(X/Z,B)$ that is also a $\mathbb Q$-complement of $(X/Z,B)$. If $(X/Z,B)$ has an $\mathbb R$-complement (resp. $n$-complement), then we say that $(X/Z,B)$ is \emph{$\mathbb R$-complementary} (resp. \emph{$n$-complementary}).

Let $(X/Z\ni z,B)$ be a pair with contraction $\pi\colon X\rightarrow Z$. An \emph{$\mathbb R$-complement} (resp. \emph{$n$-complement}, \emph{monotonic $n$-complement}) of $(X/Z\ni z,B)$ is a pair $(X/Z\ni z,B^+)$, such that
$\left(\pi^{-1}(U)/U,B^+|_{\pi^{-1}(U)}\right)$
is an $\mathbb R$-complement (resp. $n$-complement, monotonic $n$-complement) of $\left(\pi^{-1}(U)/U,B|_{\pi^{-1}(U)}\right)$ for some open neighborhood $U$ of $z$ in $Z$. We say that $(X/Z\ni z,B)$ is \emph{$\mathbb R$-complementary} (resp. \emph{$n$-complementary}) if $(X/Z\ni z,B)$ has an $\mathbb R$-complement (resp. $n$-complement).
\end{defn}

\subsection{Relative Nakayama-Zariski decomposition}

\begin{defn}
    Let $\pi\colon X\rightarrow U$ be a projective morphism between normal quasi-projective varieties, $D$ a pseudo-effective$/U$ $\Rr$-Cartier $\Rr$-divisor on $X$, and $P$ a prime divisor on $X$. We define $\sigma_{P}(X/U,D)$ as in \cite[Definition 3.1]{LX25} by considering $\sigma_{P}(X/U,D)$ as a number in  $[0,+\infty)\cup\{+\infty\}$. We define $$N_{\sigma}(X/U,D)=\sum_Q\sigma_Q(X/U,D)Q$$
    where the sum runs through all prime divisors on $X$ and consider it as a formal sum of divisors with coefficients in $[0,+\infty)\cup\{+\infty\}$. 
\end{defn}

\section{Strong complete regularity}\label{sec: strong complete regularity}

The purpose of this section is to develop the basic tools needed to work with (birational) strong complete regularity.
Although these invariants are defined by taking a maximum over (birational) qdlt Fano type models, their effective use requires several technical refinements of these models.
In particular, we will introduce \emph{reduced} (birational) qdlt Fano type models, which allow us to control the reduced boundary and its dual complex, and we will also introduce \emph{qdlt anti-ample models}, which are better suited for running MMP arguments and can be considered as a generalization of Koll\'ar models in the sense of \cite[Definition 3.7]{XZ25}. These refinements will be used repeatedly in the proofs of the main theorems in Sections \ref{sec: complement} and \ref{sec: acc}.

\subsection{Definitions and variants}

In this subsection we fix terminology for (birational) qdlt Fano type models and recall the definitions of regularity, complete regularity, and their strong variants.

\begin{defn}\label{defn: qdlt model}
Let $(X/Z,B)$ be a pair. A \emph{QF$^{\,\bir}$ model} of $(X/Z,B)$ is of the form $g\colon (Y,B_Y)\dashrightarrow (X,B)$ so that $g$ is a birational map and the following conditions hold.
\begin{enumerate}
    \item $g$ does not extract any divisor and $B_Y\geq g^{-1}_*B$.
    \item $(Y/Z,B_Y)$ is a qdlt pair.
    \item $-(K_Y+B_Y)$ is ample$/Z$.
    \item Any $g$-exceptional prime divisor $D$ such that $\mult_DB_Y=0$ does not pass through any lc center of $(Y,B_Y)$.
\end{enumerate}
We also say that $(Y/Z,B_Y)$ is a \emph{QF$^{\,\bir}$ model} of $(X/Z,B)$ associated with $g$. In addition, if $g$ is a morphism, then we say that $g\colon (Y,B_Y)\rightarrow (X,B)$ is a \emph{QF model} of $(X/Z,B)$, and also say that $(Y/Z,B_Y)$ is a \emph{QF model} of $(X/Z,B)$ associated with $g$.
\end{defn}

\begin{rem}
    In the Definition \ref{defn: qdlt model} ``QF$^{(\bir)}$" stands for ``(Birational) Qdlt Fano". We use this simplified notation because the phrase ``model of qdlt Fano type" is too long. 
    
    Be aware that the definition of QF models in Definition \ref{defn: qdlt model} has a small difference with \cite[Definition 3.5]{XZ25}, where they use $(Y/Z,B_Y^{=1})$ instead of  $(Y/Z,B_Y)$. Our definition also has a small difference with \cite[32]{XZ26} due to condition (4). On the other hand, in practice, it is important to ensure that the birational maps we may consider are isomorphic near the generic point of the lc centers of $(Y,B_Y)$ (cf. \cite[Proposition 3.6(2)]{XZ25}), while the most common QF$^{\,\bir}$ models we have are those satisfying that $B_Y=g^{-1}_*B+\Exc(g)$, i.e. reduced QF$^{\,\bir}$ models which we will introduced later (Definition \ref{defn: reduced bqf}). Therefore, it is natural to add condition (4).
\end{rem}

\begin{defn}\label{defn: regularity}
We refer the reader to \cite[Definition 8]{dFKX17} for the definition of the \emph{dual complex} $\mathcal{D}(\Delta)$ for any snc reduced divisor $\Delta$ on a smooth variety. 

Let $(X/Z,B)$ be an lc sub-pair. Let $f\colon W\rightarrow X$ be a log resolution of $(X,B)$ and write
$$K_W+B_W=f^*(K_X+B)$$
over a neighborhood of $z$. We denote by
$$\reg(X,B):=\dim\mathcal{D}\left(B_W^{=1}\right)$$
the \emph{regularity} of $(X,B)$. We also denote by $\reg(X/Z,B):=\reg(X,B)$ and say that $\reg(X/Z,B)$ is the regularity of $(X/Z,B)$. This value is well-defined and is independent of the choice of $f$ as the PL-homeomorphism class of $\mathcal{D}(B_W^{=1})$ does not depend on the choice of $f$ (cf. \cite[Theorems 1, 28]{dFKX17}). 

Let $(X/Z\ni z,B)$ be a sub-pair with a contraction $\pi\colon X\rightarrow Z$. If $(X/Z\ni z,B)$ is lc, then we denote by $\reg(X/Z\ni z,B)$, its regularity as the infimum of $\reg(\pi^{-1}(U)/U,B|_{\pi^{-1}(U)})$ over all open neighborhoods $U$ of $z$ such that $(\pi^{-1}(U),B|_{\pi^{-1}(U)})$ is lc. 

Let $(X/Z,B)$ be a pair. We denote by
$$\Reg(X/Z,B):=\max\{-1,\reg(X,B^+)\mid (X/Z,B^+)\text{ is an }\mathbb R\text{-complement of }(X/Z,B)\}$$
the \emph{complete regularity} of $(X/Z,B)$. We denote by
$$\bSReg(X/Z,B):=\max\{-1,\reg(Y,B_Y)\mid (Y/Z,B_Y)\text{ is a QF$^{\,\bir}$ model of }(X/Z,B)\}$$
and
$$\SReg(X/Z,B):=\max\{-1,\reg(Y,B_Y)\mid (Y/Z,B_Y)\text{ is a QF model of }(X/Z,B)\}$$
the \emph{birational strong complete regularity} and \emph{strong complete regularity} of $(X/Z,B)$, respectively. Note that, since $(Y,B_Y)$ is qdlt, $\reg(Y,B_Y)$ is the dimension of the dual complex $\mathcal{D}(B_Y^{=1})$ defined as in \cite[Paragraph before Lemma 36]{dFKX17}. We define  
$$\bSCoreg(X/Z,B):=\dim X-1-\bSReg(X/Z,B)$$ 
and
$$\SCoreg(X/Z,B):=\dim X-1-\SReg(X/Z,B)$$
as the \emph{birational strong coregularity} and \emph{strong coregularity} of $(X/Z,B)$ respectively.

Let $(X/Z\ni z,B)$ be a pair. We denote by 
\begin{align*}
&\Reg(X/Z\ni z,B)\\
:=&\max\{-1,\reg(X/Z\ni z,B^+)\mid (X/Z\ni z,B^+)\text{ is an }\mathbb R\text{-complement of }(X/Z\ni z,B)\}
\end{align*}
the \emph{complete regularity} of $(X/Z\ni z,B)$. We denote by 
\begin{align*}
   &\bSReg(X/Z\ni z,B)\\
   :=&\max\{-1,\reg(V/U\ni z,B_V)\mid (V/U,B_V)\text{ is a QF$^{\,\bir}$ model of }(\pi^{-1}(U)/U,B|_{\pi^{-1}(U)})\} 
\end{align*}
and
\begin{align*}
   &\SReg(X/Z\ni z,B)\\
   :=&\max\{-1,\reg(V/U\ni z,B_V)\mid (V/U,B_V)\text{ is a QF model of }(\pi^{-1}(U)/U,B|_{\pi^{-1}(U)})\} 
\end{align*}
the \emph{birational strong complete coregularity} and \emph{strong complete coregularity} of $(X/Z\ni z,B)$ respectively, where $U$ runs through all open neighborhoods of $z$ in $Z$. We define 
$$\bSCoreg(X/Z\ni z,B):=\dim X-1-\dim z-\bSReg(X/Z\ni z,B)$$
and
$$\SCoreg(X/Z\ni z,B):=\dim X-1-\dim z-\SReg(X/Z\ni z,B)$$
as the \emph{birational strong complete coregularity} and \emph{strong complete coregularity} of $(X/Z\ni z,B)$ respectively.
\end{defn}

We remark that in some references (cf. \cite{Mor24a}) complete regularity and complete coregularity are defined only for pairs $(X/Z,B)$ or $(X/Z\ni z,B)$ when $B$ is a $\mathbb Q$-divisor and only consider $\mathbb Q$-complements instead of $\mathbb R$-complements. The following lemma shows that Definition \ref{defn: regularity} is consistent with the definition of complete regularity and coregularity in other references.

\begin{lem}
Let $(X/Z,B)$ be a pair such that $B$ is a $\mathbb Q$-divisor. Then
$$\Reg(X/Z,B)=c_{\mathbb Q}:=\max\{-1,\reg(X,B^+)\mid (X/Z,B^+)\text{ is a }\mathbb Q\text{-complement of }(X/Z,B)\}.$$
\end{lem}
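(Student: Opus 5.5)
The inequality $c_{\mathbb Q}\le\Reg(X/Z,B)$ is immediate, since every $\mathbb Q$-complement is in particular an $\mathbb R$-complement. So the plan is to prove the reverse inequality. Starting from an $\mathbb R$-complement $(X/Z,B^+)$ with $\reg(X,B^+)=\Reg(X/Z,B)\ge 0$, we will produce a $\mathbb Q$-complement $(X/Z,B^{+}_{\mathbb Q})$ of $(X/Z,B)$ with
$$\reg(X,B^{+}_{\mathbb Q})\ge\reg(X,B^+).$$
If $\Reg=-1$ there is nothing to prove. The $\mathbb Q$-complement will be obtained by perturbing the coefficients of $B^+$ rationally while keeping the lc places of $(X,B^+)$ and the condition $B^{+}_{\mathbb Q}\ge B$.

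First, write $B^+=\sum_i r_i B_i$, where the $B_i$ are the prime components of $\Supp B\cup\Supp B^+$. Let $V\subset\mathbb R^m$ be the smallest rational affine subspace containing $\bm r=(r_1,\dots,r_m)$. Fix a log resolution $f\colon W\to X$ of $(X,\Supp B^+\cup\Supp B)$. For $\bm v\in V$ write $B(\bm v)=\sum v_iB_i$. Then:
\begin{itemize}
\item By the standard Shokurov polytope argument (cf.\ \cite{HLS24}), $K_X+B(\bm v)$ is $\mathbb R$-Cartier for all $\bm v\in V$, and $K_X+B(\bm v)\sim_{\mathbb R,Z}0$ for all $\bm v\in V$. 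This is because the linear condition "$\mathbb R$-Cartier and $\sim_{\mathbb R,Z}0$" is cut out by rational linear equations; this gives an $\mathbb R$-linear combination of rational relations.
\item The coefficients of $f^*(K_X+B(\bm v))-K_W$ along each prime divisor of $W$ are affine functions of $\bm v$ with rational coefficients, and they are linear. So for $\bm v$ near $\bm r$ in $V$, $(X,B(\bm v))$ is lc.
\item Every divisor $E$ on $W$ with $a(E,X,B^+)=0$ has $a(E,X,B(\bm v))=0$. Indeed, the function $\bm v\mapsto a(E,X,B(\bm v))$ is rational affine and vanishes at $\bm r$. Hence it vanishes on the smallest rational affine subspace through $\bm r$, namely $V$.
\item Likewise, any coefficient $r_i$ with $r_i=b_i$ (where $b_i=\mult_{B_i}B\in\mathbb Q$) stays equal to $b_i$ on $V$, and strict inequalities $r_i>b_i$ persist nearby.
\end{itemize}

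Choosing $\bm v\in V\cap\mathbb Q^m$ sufficiently close to $\bm r$ then gives a $\mathbb Q$-complement $B(\bm v)\ge B$ which is lc. Its lc places on $W$ contain those of $(X,B^+)$, so $B_W(\bm v)^{=1}\supseteq B_W^{+,=1}$ and the dual complex dimension does not drop. Here we need to check that lc-ness near $\bm r$ holds for all divisors over $X$, not just those on $W$. This follows because on the log resolution $W$ with snc support, lc-ness is detected by coefficients $\le1$ of the pullback, which are finitely many affine conditions.

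The main obstacle is the first bullet: ensuring that $K_X+B(\bm v)\sim_{\mathbb R,Z}0$ on all of $V$, rather than merely $\equiv$. I would handle it by writing
$$K_X+B^+=\sum_j a_j\,\mathrm{div}(\phi_j)+\pi^*L$$
with real $a_j$, and then enlarging the parameter vector to $(\bm r,\bm a)$ before taking the rational envelope. Then each rational point in the envelope yields a $\mathbb Q$-linear relation over $Z$. Projecting back to the $\bm v$-coordinates gives the desired rational $\bm v$.
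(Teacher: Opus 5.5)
Your proposal is correct and is essentially the paper's proof. Both take an $\mathbb R$-complement $B^+$ computing $\Reg(X/Z,B)$ and pass to the rational envelope $V$ of its coefficient vector. Both use the Shokurov-polytope facts (the paper simply cites \cite[Lemmas 5.3, 5.4, Theorem 5.6]{HLS24}) to get lc-ness, $K_X+B(\bm v)\sim_{\mathbb R,Z}0$ and preservation of lc places near $\bm r$. Both then use that $B$ is a $\mathbb Q$-divisor to keep $B(\bm v)\ge B$, and pick a rational point of $V$.

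One small slip: in your last paragraph the coefficients of $L$ must also go into the enlarged parameter vector, i.e.\ write $L=\sum_k l_kL_k$ with $L_k$ Cartier on $Z$ and take the envelope of $(\bm r,\bm a,\bm l)$. Otherwise an irrational $L$ prevents rational points from satisfying the relation. Your first bullet's observation, that the locus where $K_X+B(\bm v)\sim_{\mathbb R,Z}0$ is a rational affine subspace, already handles this correctly.
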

\begin{proof}
Since any $\mathbb Q$-complement of $(X/Z,B)$ is an $\mathbb R$-complement of $(X/Z,B)$, we have $c_{\mathbb Q}\leq\Reg(X/Z,B)$. In the following, we show that $c_{\mathbb Q}\geq\Reg(X/Z,B)$ which implies the lemma. We may assume that $\Reg(X/Z,B)\geq 0$. Then there exists an $\mathbb R$-complement $(X/Z,B^+)$ of $(X/Z,B)$ such that $\reg(X,B^+)=\Reg(X/Z,B)$. We write $B^+=\sum_{i=1}^m b_iB_i$ where $B_i$ are the irreducible components of $B^+$. Let $\bm{v}_0:=(b_1,\dots,b_m)$ and let $V\ni\bm{v}_0$ be the rational envelope of $\bm{v}_0$ in $\mathbb R^m$. Let $B(\bm{v}):=\sum_{i=1}^mv_iB_i$ for any $\bm{v}:=(v_1,\dots,v_m)$. 
    
    By \cite[Lemmas 5.3 and 5.4]{HLS24} and \cite[Theorem 5.6]{HLS24}, there exists an open subset $U\ni\bm{v}_0$ of $V$, such that  for any $\bm{v}\in U$, we have that $(X/Z,B(\bm{v}))$ is lc, $K_X+B(\bm{v})\sim_{\mathbb R,Z}0$, and any lc place of $(X,B^+)$ is an lc place of $(X,B(\bm{v}))$. In particular,
    $$\reg(X,B^+)\leq\reg(X,B(\bm{v}))$$
    for any $\bm{v}\in U$. Since $B$ is a $\mathbb Q$-divisor, possibly shrinking $U$, we may assume that $B(\bm{v})\geq B$ for any $\bm{v}\in U$. Take $\bm{v}_1\in U\cap\mathbb Q^m$, then $(X/Z,B(\bm{v}_1))$ is a $\mathbb Q$-complement of $(X/Z,B)$. We have
    $$\Reg(X/Z,B)=\reg(X,B^+)\leq\reg(X,B(\bm{v}_1))\leq c_{\mathbb Q}.$$
    The lemma follows.
\end{proof}

\subsection{Comparing complete regularities}

This subsection studies the basic properties of (birational) strong complete regularity, and compares (birational) strong complete regularity with Shokurov's complete regularity in the settings where both are defined. We start with the following lemma which implies that the birational strong complete regularity and strong complete regularity can always be obtained by $\mathbb R$-complements.
\begin{lem}\label{lem: bscr obtained}
Let $(X/Z,B)$ be a pair and $g\colon (Y,B_Y)\dashrightarrow (X,B)$ a QF$^{\,\bir}$ model of $(X/Z,B)$. Then there exists an $\mathbb R$-complement $(X/Z,B^+)$ of $(X/Z,B)$ such that
$$\reg(X,B^+)=\reg(Y,B_Y).$$
\end{lem}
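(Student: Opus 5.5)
Since $-(K_Y+B_Y)$ is ample$/Z$, I will add a small general ample divisor to $B_Y$, keep the lc centers unchanged, and push the result down to $X$ along $g$.

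\emph{Step 1: a Calabi--Yau pair on $Y$.} Because $(Y,B_Y)$ is qdlt, it is lc. Choose a general $0\le H\sim_{\mathbb R,Z}-(K_Y+B_Y)$, for instance a general combination of members of base point free multiples of an ample$/Z$ $\mathbb Q$-divisor close to $-(K_Y+B_Y)$, adjusted by a small ample $\mathbb R$-divisor. Taking $H$ with small coefficients and not containing any lc center of $(Y,B_Y)$ (Bertini), the pair $(Y,B_Y+H)$ is lc, $K_Y+B_Y+H\sim_{\mathbb R,Z}0$, and the lc places of $(Y,B_Y+H)$ coincide with those of $(Y,B_Y)$. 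Hence
$$\reg(Y,B_Y+H)=\reg(Y,B_Y).$$

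\emph{Step 2: push to $X$.} Set $B^+:=g_*(B_Y+H)$. By condition (1), $g^{-1}$ contracts no divisor, i.e.\ $g$ extracts nothing, so $B^+\ge g_*g^{-1}_*B=B$. We must check that $(X,B^+)$ is crepant to $(Y,B_Y+H)$. Take a common resolution $p\colon W\to Y$, $q\colon W\to X$. Since $K_Y+B_Y+H\sim_{\mathbb R,Z}0$, write $p^*(K_Y+B_Y+H)=K_W+B_W\sim_{\mathbb R,Z}0$. Then $q_*B_W=B^+$ and $K_X+B^+=q_*(K_W+B_W)\sim_{\mathbb R,Z}0$, which is $\mathbb R$-Cartier. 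By the negativity lemma applied to $K_W+B_W-q^*(K_X+B^+)$, which is $\equiv_X 0$ and $q$-exceptional, this difference vanishes. So the two pairs are crepant, $(X,B^+)$ is lc, and it is an $\mathbb R$-complement of $(X/Z,B)$. Since regularity is a crepant birational invariant (the dual complex depends only on the lc places, \cite{dFKX17}), we get $\reg(X,B^+)=\reg(Y,B_Y+H)=\reg(Y,B_Y)$.

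\emph{Main obstacle.} The delicate point is the genericity in Step 1 when $B_Y$, and hence $-(K_Y+B_Y)$, has only real coefficients. I would handle this by writing $-(K_Y+B_Y)=A$ with $A$ ample$/Z$ and expressing $A=\sum a_iA_i$ with $A_i$ ample$/Z$ Cartier divisors and $a_i>0$. For each $i$, take $H_i\in|mA_i|$ general for some large $m$, relative to $Z$, and set $H=\sum (a_i/m)H_i$. Generality ensures that $H$ contains no lc center and that $(Y,B_Y+H)$ is lc with the same lc places. Condition (4) is not needed here; only conditions (1)--(3) are used.
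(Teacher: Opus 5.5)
Your proposal is correct and follows the paper's proof. You add a general $0\le H\sim_{\mathbb R,Z}-(K_Y+B_Y)$ that keeps the lc places of $(Y,B_Y)$, push $B_Y+H$ forward along $g$, use the negativity lemma to get crepancy, and read off $B^+\ge B$ and the equality of regularities. Your extra care in writing $A=\sum a_iA_i$ with ample$/Z$ Cartier $A_i$ and in checking that $K_X+B^+$ is $\mathbb R$-Cartier fills in details the paper leaves implicit.
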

\begin{proof}
Let $p\colon W\rightarrow Y$ and $q\colon W\rightarrow X$ be a resolution of indeterminacy of $g$. Since $-(K_Y+B_Y)$ is ample$/Z$, there exists $0\leq H_Y\sim_{\mathbb R,Z}-(K_Y+B_Y)$ such that $(Y,\Delta_Y:=B_Y+H_Y)$ is lc and $\mathcal{D}(Y,\Delta_Y)=\mathcal{D}(Y,B_Y)$. We let $\Delta:=g_*\Delta_Y$ and write
$$0\sim_{\mathbb R,Z}p^*(K_Y+\Delta_Y)=\colon q^*(K_X+\Delta)+E.$$
Then $E$ is exceptional$/X$. By applying the negativity lemma twice, we have that $E=0$, so $(Y,\Delta_Y)$ and $(X,\Delta)$ are crepant. Since $\Delta\geq g_*B_Y\geq B$, we have
$$\reg(Y,B_Y)=\reg(Y,\Delta_Y)=\reg(X,\Delta)$$
The lemma follows by taking $B^+:=\Delta$.
\end{proof}

\begin{prop}\label{prop: compare creg}
    Let $(X/Z,B)$ (resp. $(X/Z\ni z,B)$) be a pair. Then
    $$\dim X-1\geq\Reg(X/Z,B)\geq\bSReg(X/Z,B)\geq\SReg(X/Z,B).$$
    $$\left(\text{resp. }\dim X-1-\dim z\geq\Reg(X/Z,B)\geq\bSReg(X/Z,B)\geq\SReg(X/Z,B).\right)$$  
\end{prop}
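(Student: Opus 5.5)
The chain of inequalities will be proved from right to left, and we treat the global case first.

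\textbf{The inequality $\bSReg\geq\SReg$.} This is immediate from the definitions. Every QF model is a QF$^{\,\bir}$ model, namely the one whose associated map $g$ happens to be a morphism. So the set over which $\SReg$ takes its maximum is contained in the set defining $\bSReg$.

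\textbf{The inequality $\Reg\geq\bSReg$.} We may assume $\bSReg(X/Z,B)\geq 0$, since otherwise there is nothing to prove. Take a QF$^{\,\bir}$ model $(Y/Z,B_Y)$ attaining the maximum. By Lemma \ref{lem: bscr obtained}, there is an $\mathbb R$-complement $(X/Z,B^+)$ of $(X/Z,B)$ with $\reg(X,B^+)=\reg(Y,B_Y)$. Hence $\Reg(X/Z,B)\geq\reg(X,B^+)=\bSReg(X/Z,B)$.

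\textbf{The inequality $\dim X-1\geq\Reg$.} Let $W\to X$ be a log resolution. The dual complex $\mathcal{D}(B_W^{=1})$ has dimension at most $\dim W-1$, because an $(m+1)$-fold intersection of components of an snc divisor is nonempty only when $m+1\leq\dim W$.

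\textbf{The germ case.} The middle inequalities are proved in the same way, applied over each open neighbourhood $U$ of $z$. We use that a QF$^{\,\bir}$ model over $U$ yields an $\mathbb R$-complement over $U$, and that $\reg(X/Z\ni z,\cdot)$ is an infimum over neighbourhoods. To compare, we shrink $U$ and observe that restrictions of QF$^{\,\bir}$ models to smaller neighbourhoods remain QF$^{\,\bir}$ models, since ampleness over $U$ and the qdlt property are preserved under restriction. This gives $\Reg(X/Z\ni z,B)\geq\bSReg(X/Z\ni z,B)$.

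\textbf{The main obstacle: the bound $\dim X-1-\dim z\geq\Reg(X/Z\ni z,B)$.} After shrinking $U$, every lc center of an $\mathbb R$-complement $(X,B^+)$ can be assumed to dominate a closed subset of $U$ containing $z$. For the infimum over neighbourhoods, it suffices to consider lc centers whose image contains $z$.

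Take a minimal stratum of $\mathcal{D}(B_W^{=1})$ of dimension $r$ lying over $z$. It is cut out by $r+1$ components, so the corresponding lc center $V$ has codimension $r+1$ and dimension $\dim X-r-1$.

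The key claim to prove is that the minimal lc centers over $z$ have dimension at least $\dim z$. If $\dim z>0$, the generic point $z$ lies in the image of every relevant center, so each such center dominates $\overline{z}$ and has dimension at least $\dim\overline{z}=\dim z$. Therefore $\dim X-r-1\geq\dim z$, which gives $r\leq\dim X-1-\dim z$.

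The most delicate point is making "lc center over $z$" precise after localizing. For this we will use that centers not meeting $\pi^{-1}(z)$ disappear after shrinking $U$. We also use that the dual complex restricted over $z$ only involves strata whose images contain $z$, because they are closures of strata meeting the generic fibre $\pi^{-1}(z)$.
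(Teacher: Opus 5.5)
Your proposal is correct and follows the paper's proof. It gets $\SReg\le\bSReg$ because QF models are QF$^{\,\bir}$ models, and $\bSReg\le\Reg$ from Lemma \ref{lem: bscr obtained}. It gets the upper bounds by counting components through strata of an snc divisor, using in the germ case that a stratum whose image contains $z$ has dimension $\geq\dim z$.

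One small imprecision: the image on $X$ of a stratum cut out by $r+1$ components need not have codimension exactly $r+1$. Run the dimension count on the stratum in $W$ itself, which is what your inequality actually uses; equivalently, you only need $\dim(\text{image})\leq\dim X-r-1$.
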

\begin{proof}
Since the QF model for any pair is also a QF$^{\,\bir}$ model, we have $$\bSReg(X/Z,B)\geq\SReg(X/Z,B)\quad \left(\text{resp. }\bSReg(X/Z\ni z,B)\geq\SReg(X/Z\ni z,B)\right).$$
By Lemma \ref{lem: bscr obtained}, we have $$\Reg(X/Z,B)\geq\bSReg(X/Z,B)\quad \left(\text{resp. }\Reg(X/Z\ni z,B)\geq\bSReg(X/Z\ni z,B)\right).$$
For any snc divisor $D$ on a smooth variety $Y$, any stratum of $D$ is the intersection of $\leq \dim Y-1$ irreducible components of $D$, and for any contraction $f\colon Y\rightarrow Z$ and point $z\in Z$, any stratum $V$ of $D$ such that $z\in f(V)$ is the intersection of $\leq \dim Y-1-\dim z$ irreducible components of $D$. Therefore,
$\Reg(X/Z,B)\leq\dim X-1$ (resp. $\Reg(X/Z\ni z,B)\leq\dim X-1-\dim z$)
by the definition of complete regularity. The proposition follows.
\end{proof}

\begin{cor}\label{cor: bscr nonnegativ imply lc}
   Let $(X/Z,B)$ be a pair such that $\bSReg(X/Z,B)\geq 0$. Then $(X,B)$ is lc. 
\end{cor}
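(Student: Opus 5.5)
The idea is to go through an $\mathbb R$-complement and use the fact that an $\mathbb R$-complement is in particular lc and dominates $B$. Since $\bSReg(X/Z,B)\geq 0>-1$, the maximum in the definition is not attained by the convention value $-1$. So there is a QF$^{\,\bir}$ model $g\colon (Y,B_Y)\dashrightarrow (X,B)$ of $(X/Z,B)$ with $\reg(Y,B_Y)=\bSReg(X/Z,B)\geq 0$; in fact the mere existence of such a model is all that is needed.

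Given this model, Lemma \ref{lem: bscr obtained} produces an $\mathbb R$-complement $(X/Z,B^+)$ of $(X/Z,B)$. By definition, $(X,B^+)$ is an lc pair with $B^+\geq B\geq 0$. Both $K_X+B$ and $K_X+B^+$ are $\mathbb R$-Cartier, so $B^+-B$ is an effective $\mathbb R$-Cartier divisor. Hence for every prime divisor $D$ over $X$,
$$a(D,X,B)\geq a(D,X,B^+)\geq 0,$$
because pulling back the effective $\mathbb R$-Cartier divisor $B^+-B$ only increases the coefficients of the crepant boundary. Therefore $(X,B)$ is lc.

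No step here is a real obstacle. The only point to check is the monotonicity of log discrepancies under adding an effective $\mathbb R$-Cartier divisor, which is standard. The corresponding statement for germs $(X/Z\ni z,B)$ follows the same way after shrinking $Z$ around $z$.
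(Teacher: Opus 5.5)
Your proof is correct and follows the paper's route. The paper cites Proposition \ref{prop: compare creg} to get $\Reg(X/Z,B)\geq 0$, and that inequality comes from Lemma \ref{lem: bscr obtained} applied to a QF$^{\,\bir}$ model, exactly as you do. The paper then concludes lc from the existence of an $\mathbb R$-complement $B^+\geq B$, which is the step you spell out.
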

\begin{proof}
By Proposition \ref{prop: compare creg}, $\Reg(X/Z,B)\geq 0$, so $(X/Z,B)$ is $\mathbb R$-complementary and hence lc.
\end{proof}

It remains interesting to ask when the inequalities in Proposition \ref{prop: compare creg} are equalities. We ask the following question:

\begin{ques}\label{ques: bscr=scr}
    Let $(X/Z,B)$ be a pair. Do we always have
    $\bSReg(X/Z,B)=\SReg(X/Z,B)$?
\end{ques}

The following example shows some subtlety of Question \ref{ques: bscr=scr}. Roughly speaking, the key difficulty for Question \ref{ques: bscr=scr} arises when anti-flips or flops appear. This is essentially because flops do not preserve qdlt property (cf. \cite[Example 3.13.9]{Fuj17}, \cite[Section 4]{Has20}).

\begin{ex}
    Let $(Y,B_Y)$ be a $\mathbb Q$-factorial dlt pair and $f\colon Y\rightarrow Z$ a $(K_Y+B_Y)$-flipping contraction such that an lc center $V$ of $(Y,B_Y)$ is contained in the flipping locus of $f$. Let $f^+\colon X\rightarrow Z$ be the flip of $f$ associated with birational map $g\colon  Y\dashrightarrow X$ and let $B:=\phi_*B_Y$.

    Consider the pair $(X/Z,B)$. We have that $g\colon (Y,B_Y)\dashrightarrow (X,B)$ is a QF$^{\,\bir}$ model of $(X/Z,B)$, so $\bSReg(X/Z,B)\geq 0$. Now suppose that there exists a QF model $h\colon (W,B_W)\rightarrow (X,B)$ of $(X/Z,B)$. We let $B':=h_*B_W$. Since $-(K_W+B_W)$ is ample$/Z$, by the negativity lemma we have
    $$K_W+B_W=h^*(K_X+B')+F$$
    for some $F\geq 0$ such that $\Exc(h)=\Supp F$.

    We have that $B'\geq B$. We claim that $K_X+B'$ is nef$/Z$. To see this, we let $B_Y'$ be the strict transform of $B'$ on $Y$. Then $B_Y'\geq B_Y$. If $K_X+B'$ is not nef$/Z$, then $K_Y+B_Y'$ is not ample$/Z$, so $g$ is $(K_Y+B_Y')$-positive. We let $D$ be an lc place of $(Y,B_Y)$ such that $\Center_YD=V$, then we have
    $$0=a(D,Y,B_Y)\geq a(D,Y,B_Y')>a(D,X,B')\geq a(D,W,B_W)\geq 0,$$
    which is not possible. This indicates that $-F$ is ample$/Z$. In particular, $h$ cannot be the identity morphism.
\end{ex}

One particularly interesting case is when the associated contraction $X\rightarrow Z$ is the identity morphism. This is what all related studies on QF models up to now \cite{LX24,Che25,XZ25,XZ26} mainly focus on, since the corresponding K-stability theory only focuses on the study of the dual complex of potential lc places of klt singularities. In this case, the birational strong complete regularity equals to the strong complete regularity.

\begin{prop}\label{prop: bscr=scr when identity}
Let $(X,B)$ be a pair. Then $\bSReg(X/X,B)=\SReg(X/X,B)$.
\end{prop}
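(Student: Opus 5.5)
The plan is to show that when $\pi=\id_X$, every QF$^{\,\bir}$ model is already a QF model, so the two sets over which the maxima are taken coincide. Since $\bSReg\geq\SReg$ by Proposition \ref{prop: compare creg}, this gives equality.

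Let $g\colon (Y,B_Y)\dashrightarrow (X,B)$ be a QF$^{\,\bir}$ model of $(X/X,B)$. By definition $g$ is a birational map over $Z=X$. This means the structure morphism $\pi_Y\colon Y\to Z=X$ satisfies $\pi_Y=\id_X\circ g=g$ as rational maps. Since $\pi_Y$ is a morphism (indeed a contraction, as $(Y/Z,B_Y)$ is a pair over $Z$), the rational map $g$ agrees with the morphism $\pi_Y$ on a dense open set. Hence $g$ extends to, and equals, the morphism $\pi_Y$.

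The definitions of QF$^{\,\bir}$ and QF models differ only in whether $g$ is required to be a morphism. Conditions (1)--(4) are identical and are intrinsic to $g$. So $g\colon (Y,B_Y)\to (X,B)$ is a QF model, and
\[
\{\text{QF}^{\,\bir}\text{ models of }(X/X,B)\}=\{\text{QF models of }(X/X,B)\}.
\]
Taking maxima of $\reg(Y,B_Y)$ over these sets, together with $-1$, yields $\bSReg(X/X,B)=\SReg(X/X,B)$.

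The only step needing care is to be precise about what ``birational map over $Z$'' means in Definition \ref{defn: qdlt model}. It means that the structure morphisms satisfy $\pi_X\circ g=\pi_Y$, and with $\pi_X=\id_X$ this forces $g$ to be a morphism. The paper notes exactly this observation after Definition \ref{defn: scr}. No MMP input is needed.
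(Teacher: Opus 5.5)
Your proposal is correct and matches the paper's proof. The paper argues in one line that every QF$^{\,\bir}$ model of $(X/X,B)$ is already a QF model, and your observation that $g$ coincides with the structure morphism $Y\to X$ simply supplies the justification for that line.
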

\begin{proof}
It follows from the fact that any birational QF model of $(X/X,B)$ is also a QF model of $(X/X,B)$.
\end{proof}

Similarly, Question \ref{ques: bscr=scr} has a positive answer when $\dim X=2$.

\begin{prop}\label{prop: surface bscr=scr}
   Let $(X/Z,B)$ be a pair such that $\dim X=2$. Then any QF$^{\,\bir}$ model of $(X/Z,B)$ is a QF model of $(X/Z,B)$. In particular, $\bSReg(X/Z,B)=\SReg(X/Z,B)$. 
\end{prop}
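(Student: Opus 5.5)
The plan is to show that for any QF$^{\,\bir}$ model $g\colon (Y,B_Y)\dashrightarrow (X,B)$ with $\dim X=2$, the map $g$ is automatically a morphism. The reason to expect this is that on surfaces a birational map that contracts no divisor in the inverse direction cannot have indeterminacy once one side is ``positive'' relative to the other.

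\emph{Step 1: a crepant comparison.} Take a resolution of indeterminacy $p\colon W\to Y$, $q\colon W\to X$ of $g$, with $W$ smooth and $q$ factoring through the minimal resolution. Set $B'_X:=g_*B_Y\geq B$. Since $g^{-1}$ contracts no divisor, every $p$-exceptional curve is $q$-exceptional. Write
$$p^*(K_Y+B_Y)=q^*(K_X+B'_X)+E,$$
where $E$ is $q$-exceptional. Here $K_X+B'_X$ is $\mathbb R$-Cartier: $Y$ is of Fano type over $Z$ by Lemma~\ref{lem: QF is ft}, and hence so is $X$, since it is a birational contraction image. By the argument of Lemma~\ref{lem: bscr obtained}, adding a general $H_Y\sim_{\mathbb R,Z}-(K_Y+B_Y)$ produces crepant lc pairs $(Y,\Delta_Y)$ and $(X,\Delta)$.

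\emph{Step 2: $E\geq 0$.} Because $-(K_Y+B_Y)$ is ample over $Z$, the divisor $-p^*(K_Y+B_Y)$ is nef over $X$. Hence $-E$ is $q$-nef up to the pullback term, so $-E\equiv_X -p^*(K_Y+B_Y)$ is $q$-nef. The negativity lemma then gives $E\geq 0$.

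\emph{Step 3: conclude that $g$ is a morphism.} Suppose $g$ is not a morphism. On surfaces, a resolution of indeterminacy then has a $(-1)$-curve $C$ on $W$ that is $q$-exceptional but not $p$-exceptional, whose image $C_Y=p_*C$ is a $g$-exceptional curve on $Y$. Pick the minimal such $W$, so that $C$ is not contracted by $p$. Intersecting the relation of Step 1 with the strict transforms gives
$$(K_Y+B_Y)\cdot C_Y=p^*(K_Y+B_Y)\cdot C=E\cdot C.$$
The left side is negative by ampleness, since $C_Y$ maps to a point of $Z$. To get a contradiction I would show $E\cdot C\geq 0$. Choose the curve $C$ on $W$ among the $q$-exceptional curves not contracted by $p$ so that it is a component appearing last in the factorization of $q$ into blow-ups, i.e.\ a $(-1)$-curve meeting the other exceptional curves only in the tree. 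Then check that $E\cdot C\geq 0$ using $\mathrm{coeff}_C E=a(C,X,B'_X)-a(C,Y,B_Y)$ together with the lc/qdlt bounds and the fact that $B_Y\geq g^{-1}_*B$.

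The main obstacle is exactly this sign argument in Step 3, where the interplay of $E\geq0$ with $C$ itself in $\Supp E$ must be handled. An alternative I would try first is to argue via the crepant pairs $(Y,\Delta_Y)$ and $(X,\Delta)$. Every $g$-exceptional curve $C_Y$ on $Y$ is a divisor over $X$ with center a point, and $a(C_Y,X,\Delta)=1-\mathrm{coeff}_{C_Y}\Delta_Y$. One then checks, using condition (4) of Definition~\ref{defn: qdlt model} together with the qdlt hypothesis, that the extraction of the curves $C_Y$ is realized by a morphism $Y\to X$. On surfaces, a birational map that does not extract divisors from $X$ and only extracts curves lying over points of $X$ is a morphism exactly when $Y$ is dominated appropriately, and the ampleness of $-(K_Y+B_Y)$ over $Z$ rules out the remaining flop-like configurations: surfaces have no small modifications, so $g^{-1}$, which is a contraction of curves, must make $g$ regular.

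Given this, every QF$^{\,\bir}$ model is a QF model, and the equality $\bSReg(X/Z,B)=\SReg(X/Z,B)$ follows from the definitions together with Proposition~\ref{prop: compare creg}.
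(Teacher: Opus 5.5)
There is a genuine gap in Step 3, and the alternative sketched after it does not supply an argument either.

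\textbf{The gap.} You have the roles of $p$ and $q$ reversed. A curve on $W$ that is $q$-exceptional but not $p$-exceptional is just the strict transform of a $g$-exceptional curve $C_Y\subset Y$. Such curves exist for every QF model that is a non-trivial morphism, so their existence says nothing about indeterminacy of $g$.

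Worse, the inequality $E\cdot C\geq 0$ you are aiming for fails for every such $C$. Since $C$ is $q$-exceptional, $E\cdot C=p^*(K_Y+B_Y)\cdot C=(K_Y+B_Y)\cdot C_Y<0$, as you yourself observe. So the intended contradiction would rule out $g$-exceptional curves altogether, which is false. For instance, take $Z=X$ an $A_1$ surface singularity, $Y$ its minimal resolution with exceptional curve $\Gamma$, and $B_Y=\Gamma$. Then $(K_Y+\Gamma)\cdot\Gamma=-2$, and $(Y,\Gamma)\to(X,0)$ is a QF model with an exceptional curve.

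The closing remarks do not fill this gap. Ampleness ruling out ``flop-like configurations'' is not an argument, and it is not true that ``$g^{-1}$ is a contraction of curves'': $g^{-1}$ contracts nothing. The ampleness of $-(K_Y+B_Y)$ plays no role in this statement.

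\textbf{The fix is already in your Step 1.} There you correctly note that every $p$-exceptional curve on $W$ is $q$-exceptional. Otherwise its image would be a prime divisor on $X$ whose center on $Y$ is a point, i.e.\ a divisor extracted by $g$. Since $Y$ is normal, $p_*\mathcal{O}_W=\mathcal{O}_Y$ and the fibers of $p$ are connected, so $q$ contracts every fiber of $p$. The rigidity lemma then gives a morphism $Y\to X$ through which $q$ factors, and this morphism is $g$.

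Equivalently, if $g$ were undefined at some $y\in Y$, Zariski's main theorem applied to the graph of $g$ would make the total transform of $y$ in $X$ a curve. Each component of that curve would be a divisor on $X$ extracted by $g$, a contradiction.

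This is exactly the paper's one-line proof: on a surface, a birational map that extracts no divisor is a morphism. No crepant comparison, negativity lemma, Fano type argument, or intersection computation is needed.
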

\begin{proof}
    Let $g\colon (Y,B_Y)\dashrightarrow (X,B)$ be a QF$^{\,\bir}$ model of $(X/Z,B)$. Since $g$ does not extract any divisor and $\dim X=2$, $g$ is a morphism. Thus $g\colon (Y,B_Y)\rightarrow (X,B)$ is a QF model of $(X/Z,B)$. The proposition follows.
\end{proof}

On the other hand, in general, complete regularity and (birational) strong complete regularity are usually different. This can be seen from the following example.

\begin{ex}
Let $X\ni x$ be a $D$-type surface singularity. Then $\Reg(X\ni x,0)=1$ because it is not exceptional. However, $\SReg(X\ni x,0)=0$. To see this, note that $D$-type singularity is weakly exceptional and only has one Koll\'ar component, hence it only has one reduced QF model (see Definition \ref{defn: reduced bqf} below) whose regularity is $0$.  By Proposition \ref{prop: surface bscr=scr}, $\SReg(X\ni x,0)=\bSReg(X\ni x,0)=0$.
\end{ex}

We conclude this subsection by considering the following proposition. It is an easy observation but indicates that birational strong complete regularity is a birational invariant that is preserved under the MMP.

\begin{prop}\label{prop: compare scr under contraction}
    Let $(X/Z,B)$ be a pair and $\phi\colon X\dashrightarrow X'$ a birational map$/Z$ (resp. birational morphism$/Z$) which does not extract any divisor and $K_{X'}+\phi_*B$ is $\mathbb R$-Cartier. Then any QF$^{\,\bir}$ model (resp. QF model) of $(X/Z,B)$ is a QF$^{\,\bir}$ model (resp. QF model) of $(X'/Z,\phi_*B)$. In particular,
    $$\bSReg(X/Z,B)\leq\bSReg(X'/Z,\phi_*B)\quad \text{(resp. }\SReg(X/Z,B)\leq\SReg(X'/Z,\phi_*B)\text{)}.$$
\end{prop}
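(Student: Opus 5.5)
We verify the four conditions of Definition \ref{defn: qdlt model} directly for the composite map. Let $g\colon (Y,B_Y)\dashrightarrow (X,B)$ be a QF$^{\,\bir}$ model of $(X/Z,B)$, and set $g':=\phi\circ g\colon Y\dashrightarrow X'$. This is a birational map over $Z$. In the morphism case, both $g$ and $\phi$ are morphisms, so $g'$ is a morphism.

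Conditions (2) and (3) depend only on $(Y/Z,B_Y)$. They therefore hold for $(Y,B_Y)$ viewed as a model over $X'$ exactly as they did over $X$. The only requirement on $X'$ itself is that $(X'/Z,\phi_*B)$ be a pair, and this follows from the hypothesis that $K_{X'}+\phi_*B$ is $\mathbb R$-Cartier.

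For condition (1), let $P$ be a prime divisor on $X'$.
\begin{itemize}
\item Since $\phi$ does not extract divisors, $P$ is the strict transform of a prime divisor $P_X$ on $X$.
\item Since $g$ does not extract divisors, $P_X$ is in turn the strict transform of a prime divisor on $Y$.
\end{itemize}
Hence $g'$ does not extract any divisor. Moreover, since $(\phi^{-1})_*$ of a prime divisor on $X'$ is its strict transform on $X$, we have
$$g'^{-1}_*\phi_*B=g^{-1}_*\bigl(\phi^{-1}_*\phi_*B\bigr)\le g^{-1}_*B\le B_Y.$$
The inequality comes from the fact that $\phi^{-1}_*\phi_*B$ is $B$ with the $\phi$-exceptional components removed.

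For condition (4), let $D$ be a $g'$-exceptional prime divisor on $Y$ with $\mult_D B_Y=0$. There are two cases.
\begin{itemize}
\item If $D$ is $g$-exceptional, condition (4) for $g$ says that $D$ contains no lc center of $(Y,B_Y)$.
\item Otherwise $g_*D$ is a prime divisor on $X$, and $D=g^{-1}_*(g_*D)$. In this case $g_*D$ must be $\phi$-exceptional, since $D$ is $g'$-exceptional. Then $\mult_D B_Y\ge \mult_{g_*D}B$, so $\mult_{g_*D}B=0$. However, this gives no control over lc centers contained in $D$, so the naive argument breaks down here.
\end{itemize}

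The second case of (4) is the main obstacle. The clean statement holds if ``$g'$-exceptional'' is read as divisors on $Y$ not mapped to divisors on the target via $g$. Since Definition \ref{defn: qdlt model}(4) is phrased in terms of $g$-exceptional divisors, I would try to check that a $g'$-exceptional $D$ with $\mult_DB_Y=0$ and non-exceptional image on $X$ cannot contain an lc center. My first attempt would use that lc centers of $(Y,B_Y)$ are strata of $\lfloor B_Y\rfloor$ in the qdlt (toroidal) locus, hence lie in components with coefficient one, and compare this with $D$ via the toroidal structure near the generic points of the lc centers.

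If that comparison fails, I would fall back as follows. Replace $B_Y$ by a small perturbation $B_Y+\epsilon D'$, where $D'$ consists of such divisors, keeping anti-ampleness for $0<\epsilon\ll 1$. Alternatively, restrict to reduced models. Either adjustment is enough to conclude the inequality $\bSReg(X/Z,B)\le\bSReg(X'/Z,\phi_*B)$, because the dual complex of a qdlt pair is unchanged when coefficients below one are increased slightly.

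Granting all four conditions, $g'\colon(Y,B_Y)\dashrightarrow(X',\phi_*B)$ is a QF$^{\,\bir}$ model, and a QF model in the morphism case. Since the maxima defining the invariants run over a larger set for $X'$, the inequalities follow directly from Definition \ref{defn: regularity}.
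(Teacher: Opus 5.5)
Your checks of (1)--(3), of the morphism case, and of (4) for $g$-exceptional divisors are correct. This direct check is all that the paper's one-line proof (``immediately follows from the definition'') amounts to. You are also right that (4) for $g'=\phi\circ g$ involves more divisors than (4) for $g$: namely $D=g^{-1}_*P$, where $P$ is a $\phi$-exceptional prime divisor and $\mult_DB_Y=0$. However, you do not prove (4) for these divisors, so the first assertion of the proposition, and with it the inequality, remains unproved. Re-reading ``$g'$-exceptional'' as ``$g$-exceptional'' changes the definition and is not allowed.

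Neither of your fallbacks works.
\begin{itemize}
\item \emph{Perturbation.} If $D$ contains an lc center $V$ of $(Y,B_Y)$ with lc place $E$, then $a(E,Y,B_Y+\epsilon D)=-\epsilon\,\mathrm{ord}_ED<0$. So $(Y,B_Y+\epsilon D)$ is not lc for any $\epsilon>0$. The dual complex is stable under small increases of coefficients only for divisors that contain no lc center, which is exactly what is at stake.
\item \emph{Reduced models.} Reducedness constrains only $g$-exceptional divisors, and $D$ is not $g$-exceptional. In any case the proposition is about every QF$^{\,\bir}$ model, not just reduced ones.
\item \emph{Local toroidal comparison.} This cannot work either. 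Locally, a coefficient-zero divisor may well pass through a stratum, e.g.\ $\{x=y\}$ through the origin of $(\mathbb A^2,\{xy=0\})$.
\end{itemize}

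What is missing is a global argument that uses the ampleness of $-(K_Y+B_Y)$ over $Z$ on curves of $D$ contracted by $Y\dashrightarrow X'$. For surfaces this can be carried out as follows.
\begin{itemize}
\item Here $\psi:=\phi\circ g$ is a morphism, and an lc center contained in $D$ must be a node $v\in C_1\cap C_2$ of $\lfloor B_Y\rfloor$.
\item Let $\pi\colon W\to Y$ be the minimal resolution. Then $\pi^*(K_Y+C_1+C_2)-K_W-\tilde C_1-\tilde C_2$ is effective, with coefficient $1$ along every curve $G$ with $\pi(G)=v$, since $(Y\ni v,C_1+C_2)$ is toric.
\item Since $D_W$ passes through $\pi^{-1}(v)$, we have $D_W\cdot\bigl(\tilde C_1+\tilde C_2+\sum_{\pi(G)=v}G\bigr)\geq 1$. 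Also $D_W^2\le -1$, because $D_W$ is contracted by $W\to X'$.
\item Combining these, $(K_Y+B_Y)\cdot D\geq(K_Y+C_1+C_2)\cdot D\geq K_W\cdot D_W+1\geq 0$. This contradicts $-(K_Y+B_Y)\cdot D>0$.
\end{itemize}
In higher dimension you need an argument of this global kind; the statement is not a formal consequence of the definitions, and your proposal does not supply one. When $\phi$ is small, as in Corollary \ref{cor: compare scr under contraction small}, no such $D$ exists and your verification is complete.
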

\begin{proof}
    It immediately follows from the definition of QF$^{\,\bir}$ models and QF models.
\end{proof}

As an immediate consequence, we know that birational strong complete regularity is an invariant for small modifications. In particular, birational strong complete regularity is preserved under flips and small $\mathbb Q$-factorializations.
\begin{cor}\label{cor: compare scr under contraction small}
 Let $(X/Z,B)$ be a pair and $\phi\colon X\dashrightarrow X'$ a small birational  map$/Z$ such that $K_{X'}+\phi_*B$ is $\mathbb R$-Cartier. Then
 $$\bSReg(X/Z,B)=\bSReg(X'/Z,\phi_*B).$$
\end{cor}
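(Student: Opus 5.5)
We apply Proposition \ref{prop: compare scr under contraction} in both directions. Since $\phi$ is small, it extracts no divisor, and neither does $\phi^{-1}\colon X'\dashrightarrow X$. Both are birational maps$/Z$.

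\emph{Step 1.} We apply Proposition \ref{prop: compare scr under contraction} to $\phi$ itself. By hypothesis $K_{X'}+\phi_*B$ is $\mathbb R$-Cartier, so we get
$$\bSReg(X/Z,B)\leq\bSReg(X'/Z,\phi_*B).$$

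\emph{Step 2.} We apply the same proposition to the pair $(X'/Z,\phi_*B)$ and the small map $\phi^{-1}$. Because $\phi$ is small, the divisors on $X$ and on $X'$ correspond bijectively, so $\phi^{-1}_*\phi_*B=B$. Also $K_X+B$ is $\mathbb R$-Cartier, since $(X,B)$ is a pair. The hypotheses are therefore satisfied and we obtain the reverse inequality
$$\bSReg(X'/Z,\phi_*B)\leq\bSReg(X/Z,B).$$

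For completeness, one can also check directly that QF$^{\,\bir}$ models are preserved under composition with $\phi^{\pm1}$. Let $g\colon(Y,B_Y)\dashrightarrow(X,B)$ be a QF$^{\,\bir}$ model, and set $g'=\phi\circ g$.
\begin{enumerate}
\item $g'$ extracts no divisor, and $g'^{-1}_*\phi_*B=g^{-1}_*B\leq B_Y$.
\item Conditions (2) and (3) of Definition \ref{defn: qdlt model} concern only $(Y,B_Y)$ over $Z$, so they are unchanged.
\item Since $\phi$ is small, the $g'$-exceptional divisors coincide with the $g$-exceptional divisors. Hence condition (4) is unchanged.
\end{enumerate}

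The only subtle point is why the conclusion holds for $\bSReg$ but not for $\SReg$: the composite $\phi\circ g$ need not be a morphism even when $g$ is. For this reason the equality is claimed only in the birational version.

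Combining Steps 1 and 2 gives
$$\bSReg(X/Z,B)=\bSReg(X'/Z,\phi_*B).$$
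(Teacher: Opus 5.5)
Your proposal is correct and is the paper's argument: apply Proposition \ref{prop: compare scr under contraction} to $\phi$ and to $\phi^{-1}$, neither of which extracts a divisor since $\phi$ is small. Your direct check of conditions (1)--(4) simply unpacks that proposition, whose proof the paper says follows immediately from the definitions.
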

\begin{proof}
It follows from Proposition \ref{prop: compare scr under contraction} by applying to $\phi$ and $\phi^{-1}$.
\end{proof}

\subsection{Reduced (birational) qdlt Fano type models}

In Definition \ref{defn: qdlt model}, the boundary $B_Y$ of a QF$^{(\bir)}$ model is only required to satisfy $B_Y\ge g^{-1}_*B$, so there is no canonical choice of $B_Y$.
For several arguments later (especially those involving boundedness results for dual complexes), it is convenient to canonicalize $B_Y$. The goal of this subsection is to introduce \emph{reduced} QF$^{(\bir)}$ models and to show that (birational) strong complete regularity can be computed using them. We begin with the following definition:

\begin{defn}\label{defn: reduced bqf}
Let $(X/Z,B)$ be a pair and $g\colon (Y,B_Y)\dashrightarrow (X,B)$ a QF$^{(\bir)}$ model of $(X/Z,B)$. We say that $(Y/Z,B_Y)$ is a \emph{reduced} QF$^{(\bir)}$ model of $(X/Z,B)$ if 
$$B_Y\geq g^{-1}_*B+\Exc(g).$$
\end{defn}

The goal of this subsection is to prove the following theorem:

\begin{thm}\label{thm: bqf induces reduced BQF}
Let $(X/Z,B)$ be a pair and $g\colon (Y,B_Y)\dashrightarrow (X,B)$ a QF$^{(\bir)}$ model of $(X/Z,B)$. Then there exists a reduced QF$^{(\bir)}$ model $h\colon (X',B')\dashrightarrow (X,B)$ of $(X/Z,B)$ satisfying the following.
\begin{enumerate}
\item Any lc place of $(Y,B_Y)$ is an lc place of $(X',B')$ and any lc place of $(X',B')$ is an lc place of $(Y,B_Y)$, and $\mathcal{D}(Y,B_Y)$ and $\mathcal{D}(X',B')$ are PL-homeomorphic.
\item $X'$ is $\mathbb Q$-factorial.
\item The induced birational map $\phi\colon Y\dashrightarrow X'$ does not extract any divisor and $$\Exc(\phi)=\Exc(g)-B_Y^{=1}\wedge\Exc(g).$$
\end{enumerate}
\end{thm}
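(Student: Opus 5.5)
Starting from the QF$^{(\bir)}$ model $g\colon (Y,B_Y)\dashrightarrow (X,B)$, I will contract the $g$-exceptional divisors $E$ that do not lie in $B_Y^{=1}$ by running an MMP. Two preliminary reductions come first. I pass to a small $\mathbb Q$-factorialization of $Y$ that is an isomorphism over the generic points of the lc centers, so that qdlt-ness and the dual complex are preserved; near lc centers $Y$ is already $\mathbb Q$-factorial (toroidal). I then replace $B_Y$ by $B_Y^{=1}+(1-\epsilon)B_Y^{<1}$ for small $\epsilon$ and add a small ample part, so that the lc places are unchanged and $-(K_Y+B_Y)$ stays ample$/Z$. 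Next I set $F:=\Exc(g)-B_Y^{=1}\wedge\Exc(g)$. By condition (4) of Definition \ref{defn: qdlt model}, together with the fact that components of $\Exc(g)$ with $0<\mult<1$ are not lc places, no component of $F$ contains an lc center of $(Y,B_Y)$. Hence for $0<\delta\ll1$ the pair $(Y,B_Y':=B_Y+\delta F)$, with coefficients of $F$-components raised towards $1$ as needed, is still qdlt with the same lc places, and $-(K_Y+B_Y')$ is still ample$/Z$.

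The main step is to increase the coefficients of the components of $F$ all the way to $1$ and run an MMP that contracts $F$. Consider $\Gamma_Y:=B_Y^{<1}\vee g^{-1}_*B+B_Y^{=1}+F$, truncated so that it is $\leq 1$. I write $K_Y+\Gamma_Y\sim_{\mathbb R,Z}(K_Y+B_Y)+(\Gamma_Y-B_Y)$, and compare with the pushforward to $X$. Since $-(K_Y+B_Y)$ is ample, Lemma \ref{lem: bscr obtained} provides a complement $(X,\Delta)$ crepant to some $(Y,\Delta_Y)$ with $\Delta_Y\geq B_Y$ and the same lc places. I will use the divisor $K_Y+B_Y+tF$ for $t\in(0,1]$. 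Over $X$, the negativity lemma shows that $F$ lies in $N_\sigma$ of $K_Y+B_Y+tF$ relative to a suitable setup. I run a $(K_Y+B_Y+F)$-MMP$/Z$ with scaling of an ample divisor, after first adjusting by $-(K_Y+B_Y)$ ample, i.e.\ I run the MMP on $F-\lambda(K_Y+B_Y)$ type divisors. By the standard argument (cf.\ \cite[Lemma 3.6.7]{BCHM10}-type $N_\sigma$ arguments), this MMP contracts exactly the components of $F$, because they are exceptional$/X$ with $\sigma$-positive coefficients. Since $Y$ is of Fano type$/Z$ (Lemma \ref{lem: QF is ft}), the MMP terminates and gives $\phi\colon Y\dashrightarrow X'$ with $\Exc(\phi)=F$ and $X'$ $\mathbb Q$-factorial, which gives (2) and (3).

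It remains to check that $(X',B':=\phi_*B_Y+\text{small})$ is a reduced QF$^{(\bir)}$ model. For anti-ampleness, $\phi$ is $(K_Y+B_Y)$-negative-or-trivial only up to small $F$-perturbations, so I take $B'$ to be the pushforward of $B_Y+\epsilon'(\text{ample})$ and take the ample model of $-(K+B')$. Since $X'$ is of Fano type, $-(K_{X'}+B')$ is big, and a small modification of the ample model keeps $\phi$ non-extracting. For reducedness, note that $\Exc(X'\dashrightarrow X)$ consists exactly of components of $B_Y^{=1}$, so $B'\geq h^{-1}_*B+\Exc(h)$. For (1) and (4), every MMP step is an isomorphism near the generic points of the lc centers, because $F$ and the flipping loci avoid them by the $\delta$-perturbation and $(K+B_Y+\delta F)$-negativity: the discrepancies of lc places cannot increase, and they stay $0$. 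This gives equality of lc places, and qdlt is preserved; the dual complexes are then PL-homeomorphic by \cite{dFKX17}.

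The main obstacle is this last point. I must make sure the MMP is simultaneously negative for a divisor whose negativity forbids touching lc centers, and anti-ample at the end, since flops can destroy qdlt-ness. I would first handle it by choosing the MMP for $K_Y+B_Y+\delta F-\mu(K_Y+B_Y)$ with $\mu$ close to $1$, so that the steps are $(K_Y+B_Y)$-positive only through $F$. I would then verify via the discrepancy chain inequality used in the flip example above that no lc place gets strictly smaller log discrepancy.
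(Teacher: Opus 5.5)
Your skeleton agrees with the paper's: small $\mathbb Q$-factorialization, $F:=\Exc(g)-B_Y^{=1}\wedge\Exc(g)$, an $F$-MMP$/Z$ contracting exactly $\Supp F$ because $N_{\sigma}(Y/Z,F)=F$, and termination since $Y/Z$ is of Fano type. However, there is a genuine gap exactly where you locate the main obstacle.

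The MMP you actually run, on $F-\lambda(K_Y+B_Y)$, is essentially an $F$-MMP. It is not a $(K_Y+B_Y+\delta F)$-MMP: after the first step the pushforward of $K_Y+B_Y$ is no longer anti-ample, and later steps may be $(K+B)$-positive. So your claim that flipping and divisorial loci avoid the lc centers ``by $(K+B_Y+\delta F)$-negativity'' is unfounded. Nothing you write prevents an lc place from acquiring positive log discrepancy, and hence dropping out of the dual complex, or the pair from ceasing to be lc.

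Your proposed repair does not close this. For $\mu<1$, the divisor $(1-\mu)(K_Y+B_Y)+\delta F$ is $\delta F$ minus an ample divisor. This is in general not pseudo-effective$/Z$: since $N_{\sigma}(Y/Z,F)=F$, $F$ is not big$/Z$ unless $X\to Z$ is birational. So this MMP ends with a Mori fibre space instead of contracting $F$. For $\mu\geq 1$, you still need an argument that no step touches an lc center, which is exactly what the following supplies.

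The missing idea is to view the MMP as a log MMP for the complement that you quote from Lemma \ref{lem: bscr obtained} but never exploit. Take a general $0\leq H_Y\sim_{\mathbb R,Z}-(K_Y+B_Y)$ and set $\Delta_Y:=B_Y+H_Y$, so that $K_Y+\Delta_Y\sim_{\mathbb R,Z}0$ and $\mathcal D(Y,\Delta_Y)=\mathcal D(Y,B_Y)$. Now $\Supp F$ contains no lc center: condition (4) of Definition \ref{defn: qdlt model} handles coefficient-zero components, and a component with coefficient in $(0,1)$ of a qdlt pair cannot contain an lc center. (That such components are not lc places is not the relevant point.) Hence $(Y,\Delta_Y+\epsilon F)$ is qdlt with the same lc centers. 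The $F$-MMP$/Z$ \emph{is} then a $(K_Y+\Delta_Y+\epsilon F)$-MMP$/Z$, and it is $(K_Y+\Delta_Y)$-trivial.

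It follows that $(X',\Delta':=\phi_*\Delta_Y)$ is qdlt and crepant to $(Y,\Delta_Y)$. This yields the equality of lc places and, by \cite[Proposition 11, Corollary 38]{dFKX17}, the PL-homeomorphism. Moreover, no lc center $\Center_YD$ lies in the non-isomorphism locus of $\phi$, since otherwise $0=a(D,X',\Delta')=a(D,X',\phi_*(\Delta_Y+\epsilon F))>a(D,Y,\Delta_Y+\epsilon F)=0$.

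The endgame has a second gap. Passing to the ample model of $-(K_{X'}+B')$ destroys the $\mathbb Q$-factoriality required in (2), and you have no control over what it contracts. It may contract divisors that are not $h$-exceptional, so that the induced map to $X$ extracts them, violating condition (1) of Definition \ref{defn: qdlt model}. It may also contract loci containing lc centers. A further ``small modification'' restoring $\mathbb Q$-factoriality would again destroy anti-ampleness.

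The paper instead stays on $X'$ and perturbs the boundary. Since $H_Y$ is ample and $\phi$ is a $(K_Y+\Delta_Y+\epsilon F)$-MMP, the qdlt version of \cite[Lemma 3.10.11(2)]{BCHM10} shows that $\Bb_+(\phi_*H_Y/Z)$ contains no lc center of $(X',\Delta')$. So $\phi_*H_Y\sim_{\mathbb R,Z}C'+H'$ with $H'$ ample$/Z$ and $C'\geq 0$ containing no lc center. Then $B':=\phi_*B_Y+(1-\delta)\phi_*H_Y+\delta C'$ satisfies $-(K_{X'}+B')\sim_{\mathbb R,Z}\delta H'$ and $B'\leq\Delta'+\delta C'$. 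Hence $(X',B')$ is qdlt with $B'^{=1}=\Delta'^{=1}$, and $B'\geq\phi_*B_Y\geq h^{-1}_*B$.

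Finally, your preliminary replacement of $B_Y$ by $B_Y^{=1}+(1-\epsilon)B_Y^{<1}$ is unnecessary. It can also break $B_Y\geq g^{-1}_*B$, and hence $B'\geq h^{-1}_*B$.
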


Before we prove Theorem \ref{thm: bqf induces reduced BQF}, we prove the following useful lemmas. The first lemma indicates that certain small $\mathbb Q$-factorialization preserves the structure of QF$^{(\bir)}$ models, possibly after a boundary change:

\begin{lem}\label{lem: qfactorial bqf model}
    Let $(X/Z,B)$ be a pair and $g\colon (Y,B_Y)\dashrightarrow (X,B)$ a QF$^{(\bir)}$ model of $(X/Z,B)$. Then there exists a small $\mathbb Q$-factorialization $h\colon W\rightarrow Y$ that is an isomorphism near the generic point of any lc center of $(Y,B_Y)$ and a pair $(W,B_W)$, such that $g\circ h\colon (W,B_W)\dashrightarrow (X,B)$ is a QF$^{(\bir)}$ model of $(X/Z,B)$, any lc place of $(Y,B_Y)$ is an lc place of $(W,B_W)$ and any lc place of $(W,B_W)$ is an lc place of $(Y,B_Y)$, and $\mathcal{D}(W,B_W)\cong\mathcal{D}(Y,B_Y)$.
\end{lem}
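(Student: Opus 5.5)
We construct $h$ from a small $\mathbb Q$-factorialization of $Y$ that is an isomorphism over the qdlt part, and then perturb the boundary to restore ampleness.

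\textbf{Step 1: the modification $h$.} Since $(Y,B_Y)$ is qdlt, there is an open $U\subset Y$ containing the generic point of every lc center such that $(U,B_Y|_U)$ is $\mathbb Q$-factorial log toroidal. Since $-(K_Y+B_Y)$ is ample$/Z$, Lemma \ref{lem: QF is ft} gives that $Y$ is of Fano type$/Z$. Choose $0\le H\sim_{\mathbb R,Z}-(K_Y+B_Y)$ general, so that $(Y,B_Y+H)$ is lc with the same lc places as $(Y,B_Y)$. Also choose a klt pair $(Y,\Delta)$. Perturbing $B_Y$ slightly towards $\Delta$ gives a klt pair $(Y,\Gamma)$ with $-(K_Y+\Gamma)$ still ample$/Z$. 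By \cite{BCHM10}, $Y$ admits a small $\mathbb Q$-factorialization $h\colon W\to Y$.

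To make $h$ an isomorphism near the generic points of the lc centers, note the following. Over $U$, $Y$ is already $\mathbb Q$-factorial, and $h$ is small. Hence over any open set where $Y$ is $\mathbb Q$-factorial, $h$ contracts no curve. Indeed, any divisor on $W$ is the strict transform of a $\mathbb Q$-Cartier divisor there, so it is $h$-trivial, while an $h$-ample divisor exists. So $h$ is an isomorphism over $U$. This requires the $\mathbb Q$-factoriality locus to be open and to contain $U$, which follows from the toroidal description. It is cleaner to argue that $h$ is an isomorphism over every point where $Y$ is $\mathbb Q$-factorial, and to apply this to the generic points of the lc centers.

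\textbf{Step 2: the boundary and the Fano condition.} Set $B_W:=h^{-1}_*B_Y$, so that $K_W+B_W=h^*(K_Y+B_Y)$. Then $(W,B_W)$ is crepant to $(Y,B_Y)$, so the two pairs have the same lc places. Because $h$ is an isomorphism near the generic points of the lc centers, $(W,B_W)$ is qdlt, and $\mathcal D(W,B_W)\cong\mathcal D(Y,B_Y)$.

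However, $-(K_W+B_W)$ is only nef and big$/Z$, not ample. To fix this, take an $h$-ample divisor. Since $W$ is $\mathbb Q$-factorial, we may take it of the form $-A$, with $A$ an effective divisor and $-A$ $h$-ample; explicitly, $A$ is the strict transform of a non-$\mathbb Q$-Cartier divisor, adjusted by a pullback. Set $B_W^\epsilon:=B_W+\epsilon A$. Then $-(K_W+B_W^\epsilon)=h^*(\text{ample})-\epsilon A$ is ample$/Z$ for $0<\epsilon\ll1$.

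We must also keep the lc places unchanged. Since $A$ is $h$-exceptional-ish over the non-$\mathbb Q$-factorial locus, we need $\Supp A$ to avoid the generic points of the lc centers of $(W,B_W)$. This is arranged by choosing $A$ to be the strict transform of a general member of a suitable linear system whose non-$\mathbb Q$-Cartier locus is disjoint from $U$. Then $(W,B_W+\epsilon A)$ is lc with exactly the same lc centers and places for small $\epsilon$, and it remains qdlt.

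\textbf{Step 3: the QF$^{(\bir)}$ conditions.} Conditions (1)–(3) are clear. For (4), $g\circ h$ has the same exceptional divisors as $g$, because $h$ is small. Their multiplicities in the boundary only increase, and the lc centers of $(W,B_W^\epsilon)$ are the $h$-preimages of those of $(Y,B_Y)$, isomorphically. Hence condition (4) is inherited from $g$.

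\textbf{Main obstacle.} The delicate point is Step 2: producing an effective $A$ with $-A$ $h$-ample whose support misses every lc center of $(W,B_W)$. I would first try $A:=h^{-1}_*D'$ for a general effective Weil divisor $D'$ that is $\mathbb Q$-Cartier near every lc center but not globally. Using the Cartier property near $U$, I would then check that $h^*$-correction is unnecessary there.
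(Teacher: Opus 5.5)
Your proposal is correct and follows the same route as the paper. The paper takes a small $\mathbb Q$-factorialization $h$ that is an isomorphism over the $\mathbb Q$-factorial toroidal locus; it simply cites [dFKX17, Lemma 36] for this, where you give the direct argument that an $h$-ample divisor is $h$-trivial over $U$. It then takes the crepant pullback $K_W+C_W=h^*(K_Y+B_Y)$ and adds a small effective perturbation whose support misses the generic points of the lc centers; the paper packages this last step as the assertion that $\Bb_+(-(K_W+C_W)/Z)$ contains no lc center.

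Your Step 2 is exactly what justifies that assertion, with one correction to your final paragraph. An arbitrary divisor that fails to be $\mathbb Q$-Cartier will not do, since its strict transform need not be anti-$h$-ample. Instead, take $D'$ to be a general member of $|-h_*G+M|$, with $G$ an $h$-ample Cartier divisor on $W$ and $M$ a sufficiently ample Cartier divisor on $Y$. Then $-h^{-1}_*D'\sim G-h^*M$ is $h$-ample, and $\mathcal O_Y(D')$ is generated at the generic points of the lc centers, so the support of $A=h^{-1}_*D'$ avoids them.
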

\begin{proof}
    By \cite[Lemma 36]{dFKX17}, there exists a small $\mathbb Q$-factorialization $h\colon W\rightarrow Y$ that is an isomorphism near the generic point of any lc center of $(Y,B_Y)$. Let $K_W+C_W:=h^*(K_Y+B_Y)$. Then any lc place of $(Y,B_Y)$ is an lc place of $(W,C_W)$ and any lc place of $(W,C_W)$ is an lc place of $(Y,C_Y)$, and $\mathcal{D}(W,C_W)\cong\mathcal{D}(Y,B_Y)$.

    We have that $-(K_W+C_W)$ is big$/Z$ and nef$/Z$ and $\Bb_+(-(K_W+C_W)/Z)$ does not contain any lc center of $(W,C_W)$, so there exists $E\geq 0$ on $W$ such that $-(K_W+C_W+E)$ is ample$/Z$ and $\mathcal{D}(W,C_W+E)=\mathcal{D}(W,C_W)$. We may take $B_W:=C_W+E$.
\end{proof}

The second lemma shows that the ambient variety of any pair with non-negative birational strong complete regularity is of Fano type. As we mentioned in the introduction, this indicates that (birational) strong complete regularity is more suitable for Fano type varieties.

\begin{lem}\label{lem: QF is ft}
    Let $(X/Z,B)$ be a pair. Assume that $g\colon (Y,B_Y)\dashrightarrow (X,B)$ is a QF$^{\,\bir}$ model of $(X/Z,B)$. Then $Y$ and $X$ are of Fano type over $Z$.
\end{lem}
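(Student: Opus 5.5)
First I show that $Y$ is of Fano type over $Z$, and then I transfer this to $X$ through the birational map $g$, which does not extract divisors.

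\emph{Step 1: $Y$ is of Fano type over $Z$.} The pair $(Y,B_Y)$ is qdlt, hence lc, and $-(K_Y+B_Y)$ is ample$/Z$. Since $(Y,B_Y)$ is qdlt, $Y$ is klt: near the generic points of lc centers it is $\mathbb Q$-factorial toroidal, and elsewhere $(Y,B_Y)$ is klt. More concretely, by \cite[Lemma 36]{dFKX17} and Lemma \ref{lem: qfactorial bqf model}, after a small $\mathbb Q$-factorialization we may assume there is a klt pair $(Y,\Delta_0)$. The pair $(Y,\Delta_0)$ is then a klt pair with $\Delta_0\le B_Y$; for example, take $\Delta_0=(1-\epsilon)B_Y$ in the $\mathbb Q$-factorial qdlt case, using that qdlt pairs with the boundary scaled down become klt. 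For $0<\epsilon\ll1$, set $\Delta_\epsilon:=(1-\epsilon)B_Y+\epsilon\Delta_0'$, where $\Delta_0'$ is chosen so that $(Y,\Delta_\epsilon)$ is klt. Then $-(K_Y+\Delta_\epsilon)=-(K_Y+B_Y)+\epsilon(B_Y-\Delta_0')$ is still ample$/Z$ for small $\epsilon$, since ampleness is an open condition. Adding a general small ample$/Z$ divisor keeps the pair klt, so $Y$ is of Fano type$/Z$.

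A cleaner alternative to the scaling: write $-(K_Y+B_Y)\sim_{\mathbb R,Z}A$ with $A$ ample, and perturb $B_Y$ to $(1-\epsilon)B_Y$. Then $(Y,(1-\epsilon)B_Y)$ is klt by the qdlt property: on the toroidal locus, a reduced toroidal boundary scaled below $1$ gives a klt pair. Also $-(K_Y+(1-\epsilon)B_Y)=-(K_Y+B_Y)+\epsilon B_Y$, and this remains ample$/Z$ for small $\epsilon$ if $B_Y$ is $\mathbb R$-Cartier. If $B_Y$ is not $\mathbb R$-Cartier, first pass to the $\mathbb Q$-factorialization of Lemma \ref{lem: qfactorial bqf model}. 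That lemma replaces $Y$ by $W$ with $-(K_W+B_W)$ ample, and Fano type descends along the small morphism $W\to Y$ by pushforward, since pushing forward a klt pair with big and nef anti-log-canonical divisor gives a klt pair of Fano type.

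\emph{Step 2: descend to $X$.} Fix a klt pair $(Y,\Delta_Y)$ with $-(K_Y+\Delta_Y)$ ample$/Z$. Choose a general $0\le H\sim_{\mathbb R,Z}-(K_Y+\Delta_Y)$ so that $(Y,\Delta_Y+H)$ is klt and $K_Y+\Delta_Y+H\sim_{\mathbb R,Z}0$. As in the proof of Lemma \ref{lem: bscr obtained}, because $g^{-1}$ contracts no divisor, the negativity lemma shows that $(Y,\Delta_Y+H)$ and $(X,\Delta:=g_*(\Delta_Y+H))$ are crepant. Hence $(X,\Delta)$ is klt and log Calabi--Yau over $Z$. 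Moreover, $\Delta\ge g_*H$, and $g_*H$ is big$/Z$ because $H$ is big and $g$ is birational with $g^{-1}$ contracting no divisors. By the standard criterion, a klt pair $(X,\Delta)$ with $K_X+\Delta\sim_{\mathbb R,Z}0$ and $\Delta$ big$/Z$ makes $X$ of Fano type$/Z$. To apply it, write $g_*H\sim_{\mathbb R,Z} A+E$ with $A$ ample and $E\ge0$, and replace $\Delta$ by $\Delta-\delta g_*H+\delta(A'+E)$ for a general $A'\sim A$.

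The main obstacle is Step 1, namely making rigorous that a qdlt pair admits a klt perturbation with anti-ample log canonical divisor when $Y$ is not $\mathbb Q$-factorial. I would handle it by first applying Lemma \ref{lem: qfactorial bqf model} to reduce to the $\mathbb Q$-factorial case.
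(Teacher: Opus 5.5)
Your proof is correct. Step 2 is the paper's descent argument: the negativity lemma applied twice makes the klt log Calabi--Yau pair on $Y$ crepant to its pushforward on $X$, and the pushforward of a big boundary is big because $g$ extracts no divisors.

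The difference is in Step 1. The paper gets the klt pair on $Y$ in one line from \cite[Lemma 2.3]{XZ25}. As used there, that lemma applies directly to the possibly non-$\mathbb Q$-factorial qdlt $Y$. It produces a klt $(Y,\Delta_Y)$ with $K_Y+\Delta_Y\sim_{\mathbb R,Z}0$ and $\Delta_Y$ big$/Z$, and this same pair is then pushed to $X$.

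Your route is more hands-on:
\begin{enumerate}
\item pass to the $\mathbb Q$-factorial model $W$ of Lemma \ref{lem: qfactorial bqf model}, which is proved independently, so there is no circularity;
\item use that every lc center of a qdlt pair lies in $\lfloor B_W\rfloor$, so $(W,(1-\epsilon)B_W)$ is klt while $-(K_W+(1-\epsilon)B_W)$ stays ample$/Z$;
\item descend to $Y$ and $X$ by the Step 2 argument.
\end{enumerate}
This avoids the external lemma, at the cost of an extra descent and of relying on small $\mathbb Q$-factorializations via \cite[Lemma 36]{dFKX17}.

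A few statements should be corrected when you write this up:
\begin{itemize}
\item ``$Y$ is klt'' is false in general, since $K_Y$ need not be $\mathbb Q$-Cartier; $Y$ is only potentially klt.
\item The first paragraph's construction of $\Delta_\epsilon$ via an unspecified $\Delta_0'$ is circular. Drop it and keep only the ``cleaner alternative.''
\item ``Pushing forward a klt pair with big and nef anti-log-canonical divisor gives a klt pair'' is wrong as stated, because $K_Y+h_*\Delta_W$ need not be $\mathbb R$-Cartier. First add a general $0\le H_W\sim_{\mathbb R,Z}-(K_W+\Delta_W)$, then push forward the resulting klt log Calabi--Yau pair, exactly as in your Step 2.
\end{itemize}
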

\begin{proof}
Let $H_Y:=-(K_Y+B_Y)$. Since $(Y,B_Y)$ is qdlt and $-(K_Y+B_Y)$ is ample$/Z$, by \cite[Lemma 2.3]{XZ25}, there exists a klt pair $(Y,\Delta_Y)$ such that
$$K_Y+B_Y+H_Y\sim_{\mathbb R,Z}K_Y+\Delta_Y$$
and $\Delta_Y$ is big$/Z$. In particular, $Y$ is of Fano type over $Z$. We let $p\colon W\rightarrow Y$ and $q\colon W\rightarrow X$ be a resolution of indeterminacy of $g$. Let $\Delta:=g_*\Delta_Y$. Since $g$ does not extract any divisor, we have that $\Delta$ is big$/Z$. We may write
$$0\sim_{\mathbb R,Z}p^*(K_Y+\Delta_Y)=\colon q^*(K_X+\Delta)+E.$$
Then $E$ is exceptional$/X$. By applying the negativity lemma twice, we have that $E=0$. Thus $(X,\Delta)$ is klt and $K_X+\Delta\sim_{\mathbb R,Z}0$, so $X$ is of Fano type over $Z$.
\end{proof}

Now we are ready to prove Theorem \ref{thm: bqf induces reduced BQF}.

\begin{proof}[Proof of Theorem \ref{thm: bqf induces reduced BQF}]
By Lemma \ref{lem: qfactorial bqf model}, possibly replacing $Y$ with a small $\mathbb Q$-factorialization and replacing $B_Y$ accordingly, we may assume that $Y$ is $\mathbb Q$-factorial. By Lemma \ref{lem: QF is ft}, $Y/Z$ is of Fano type. We let $F:=\Exc(g)-B_Y^{=1}\wedge\Exc(g)$. Since $Y/Z$ is of Fano type, we may run a $F$-MMP$/Z$ (resp. $F$-MMP$/X$) when $(Y/Z,B_Y)$ is a QF$^{\,\bir}$ (resp. QF) model of $(X/Z,B)$, which terminates with a model $X'$ associated with birational map $\phi\colon Y\dashrightarrow X'$ and birational map (resp. morphism) $h\colon X'\rightarrow X$. 

We will show that $\phi$ and $h$, together with $B'$ which we will construct later, satisfy our requirements. Theorem \ref{thm: bqf induces reduced BQF}(2) is satisfied as $X'$ is $\mathbb Q$-factorial.

Let $A_Y:=-(K_Y+B_Y)$. Then $A_Y$ is ample$/Z$, so we may choose $0\leq H_Y\sim_{\mathbb R,Z}A_Y$ such that $(Y,\Delta_Y:=B_Y+H_Y)$ is $\mathbb Q$-factorial qdlt and $\mathcal{D}(Y,\Delta_Y)=\mathcal{D}(Y,B_Y)$. Let $H:=g_*H_Y$. Let $p\colon W\rightarrow Y$ and $q\colon W\rightarrow X$ be a resolution of indeterminacy of $g$, then we have
$$p^*(K_Y+\Delta_Y)=q^*(K_X+B+H).$$
Since $F$ is exceptional$/X$, by \cite[Lemma 3.4(2)]{LX25},
\begin{align*}
   N_{\sigma}(Y/Z,F)&=N_{\sigma}(Y/Z,K_Y+\Delta_Y+F)=\beta_*N_{\sigma}(W/Z,p^*(K_Y+\Delta_Y)+\beta^*F)\\
   &=\beta_*N_{\sigma}(W/Z,q^*(K_X+B+H)+\beta^*F)=\beta_*\beta^*F=F.
\end{align*}
Therefore, by \cite[Lemma 2.25]{LMX24} (resp. \cite[Lemma 3.3]{Bir12}), the divisors contracted by $\phi$ are exactly the irreducible components of $F$, and $\phi_*F=0$. This implies Theorem \ref{thm: bqf induces reduced BQF}(3).

Since $(Y,\Delta_Y)$ is $\mathbb Q$-factorial qdlt and $\mathcal{D}(Y,\Delta_Y)=\mathcal{D}(Y,B_Y)$, by condition (4) of Definition \ref{defn: qdlt model}, $\Supp F$ does not contain any lc center of $(Y,\Delta_Y)$. Thus for any $0<\epsilon\ll 1$, $(Y,\Delta_Y+\epsilon F)$ is $\mathbb Q$-factorial qdlt and $\mathcal{D}(Y,\Delta_Y+\epsilon F)=\mathcal{D}(Y,B_Y)$. Since $K_Y+\Delta_Y\sim_{\mathbb R,Z}0$, $\phi$ is also a sequence of steps of a $(K_Y+\Delta_Y+\epsilon F)$-MMP$/Z$, hence
$$(X',\Delta':=\phi_*\Delta_Y)=(X',\phi_*(\Delta_Y+\epsilon F))$$
is $\mathbb Q$-factorial qdlt. Since $(Y,\Delta_Y)$ and $(X',\Delta')$ are crepant, by \cite[Proposition 11, Corollary 38]{dFKX17}, $\mathcal{D}(Y,\Delta_Y)$ and $\mathcal{D}(X',\Delta')$ are PL-homeomorphic. Moreover, for any prime divisor $D$ over $X$, $D$ is an lc place of $(Y,\Delta_Y)$ if and only if $D$ is an lc place of $(X',\Delta')$.

Since $H_Y$ is ample$/Z$, we have $\Bb_+(H_Y/Z)=\emptyset$. In particular, $\Bb_+(H_Y/Z)$ does not contain any lc center of $(Y,\Delta_Y+\epsilon F)$. By (the qdlt version of) \cite[Lemma 3.10.11(2)]{BCHM10}, we have that $\Bb_+(\phi_*H_Y/Z)$ does not contain any lc center of $(X',\Delta')$. Thus we may write
$$\phi_*H_Y\sim_{\mathbb R,Z}C'+H'$$
where $H'$ is ample$/Z$, $C'\geq 0$, and $\Supp C'$ does not contain any lc center of $(X',\Delta')$. Thus we may find $0<\delta\ll 1$ such that $(X',\Delta'+\delta C')$ is qdlt and $\mathcal{D}(X',\Delta')=\mathcal{D}(X',\Delta'+\delta C')$. Let
$$B':=\phi_*B_Y+(1-\delta)\phi_*H_Y+\delta C',$$
then $\Delta'+\delta C'\geq B'$, so $(X',B')$ is qdlt, and
$$\mathcal{D}(X',B')=\mathcal{D}(B'^{=1})=\mathcal{D}(\Delta'^{=1})=\mathcal{D}(X',\Delta').$$
Therefore, $\mathcal{D}(Y,B_Y)$ and $\mathcal{D}(X',B')$ are PL-homeomorphic, and for any prime divisor $D$ over $X$, $D$ is an lc place of $(Y,B_Y)$ if and only if $D$ is an lc place of $(X',B')$. This implies Theorem \ref{thm: bqf induces reduced BQF}(1).

We are left to check that $h\colon (X',B')\dashrightarrow (X,B)$ is a QF$^{(\bir)}$ model of $(X/Z,B)$. To do so we check Definition \ref{defn: qdlt model}(1)--(4) for  $h\colon (X',B')\dashrightarrow (X,B)$. Definition \ref{defn: qdlt model}(1) is satisfied as
$$B'\geq\phi_*B_Y\geq\phi_*g^{-1}_*B=h^{-1}_*B$$
as $\phi$ does not extract any divisor. Definition \ref{defn: qdlt model}(2) is satisfied by our discussions above. We have that
$$-(K_{X'}+B')\sim_{\mathbb R,Z}\delta H'$$
is ample$/Z$, which implies Definition \ref{defn: qdlt model}(3). Finally, Theorem \ref{thm: bqf induces reduced BQF}(3) implies Definition \ref{defn: qdlt model}(4) as any $h$-exceptional prime divisor $D$ is an irreducible component of $B'^{=1}$. The theorem follows.
\end{proof}

An immediate consequence of Theorem \ref{thm: bqf induces reduced BQF} is the following proposition on the boundedness of complements which preserves the dual complex of QF$^{\,\bir}$ models as a subcomplex.

\begin{prop}
Let $d$ be a positive integer and $\Ii\subset [0,1]$ a DCC set of real numbers. Then there exists a positive integer $N$ depending only on $d$ and $\Ii$ satisfying the following.

Let $(X/Z,B)$ be a pair such that $\dim X=d$ and $B\in\Ii$. Let and $g\colon (Y,B_Y)\dashrightarrow (X,B)$ a QF$^{\,\bir}$ model of $(X/Z,B)$. Then for any point $z\in Z$, possibly shrinking $Z$ to a neighborhood of $z$, there exists an $N$-complement $(X/Z,B^+)$ of $(X/Z,B)$ such that any lc place of $(Y/Z,B_Y)$ is an lc place of $(X/Z,B^+)$, and the image of any lc place of $(X,B^+)$ in $Z$ contains $z$. In particular,
$$\reg(X/Z\ni z,B^+)\geq\reg(Y/Z\ni z,B_Y).$$
 Moreover, if $\overline{\Ii}\subset\mathbb Q$, then we may take $B_Y^+\geq B_Y$. 
\end{prop}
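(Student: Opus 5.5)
The plan is to reduce to a reduced QF$^{\,\bir}$ model and then apply boundedness of complements on the Fano type model. By Theorem \ref{thm: bqf induces reduced BQF}, I will replace $g\colon (Y,B_Y)\dashrightarrow (X,B)$ by a reduced $\mathbb Q$-factorial QF$^{\,\bir}$ model $h\colon (X',B')\dashrightarrow (X,B)$. This model has the same lc places and PL-homeomorphic dual complex, so it is enough to treat $(X',B')$. Then I set
$$B'_0:=h^{-1}_*B+\Exc(h).$$
Every coefficient of $B'_0$ lies in $\Ii\cup\{1\}$, which is a DCC set, and $B'\geq B'_0$. By Lemma \ref{lem: QF is ft}, $X'$ is of Fano type over $Z$. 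Since $-(K_{X'}+B')$ is ample$/Z$, the pair $(X'/Z,B'_0)$ is $\mathbb R$-complementary, and so is $(X'/Z,B')$.

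To keep the lc places, I perturb $B'$ to a boundary $\Delta'$ with $B'_0\le\Delta'$ and coefficients in a finite (hence DCC) set. I will write $B'=B'^{=1}+B'^{<1}$ and keep $\Delta'^{=1}=B'^{=1}$. Since $(X',B')$ is qdlt and Fano$/Z$, the lc centers are the strata of $B'^{=1}$. The task is then to find an $N$-complement $B'^+$ of $(X'/Z\ni z,B'_0)$ that contains $B'^{=1}$ with coefficient one. Every lc place of $(X',B')$ is a toroidal valuation over the strata of $B'^{=1}$, so it is then automatically an lc place of $(X',B'^+)$.

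For this I will use the boundedness of complements with DCC coefficients for Fano type varieties (\cite{HLS19}, Birkar's theorem for the relative case). I apply it to $(X'/Z\ni z,B'_0\vee B'^{=1})$. Its coefficients lie in $\Ii\cup\{1\}$, and it is $\mathbb R$-complementary because $B'\geq$ it. Any $N$-complement gives $B'^+\geq B'^{=1}$, so the strata of $B'^{=1}$, and hence all lc places of $(X',B')$, remain lc places. I push the complement to $X$ by $B^+:=h_*B'^+$. It is crepant by the negativity-lemma argument of Lemma \ref{lem: bscr obtained}, because $K_{X'}+B'^+\sim_{Z}0$ and $h$ does not extract divisors. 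I also need $N B^+\geq\lfloor (N+1)B^{<1}\rfloor+NB^{=1}$. This follows from the same inequality for $h^{-1}_*B\le B'_0$, since $h^{-1}$ does not contract divisors.

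For the condition that the image of every lc place contains $z$: after shrinking $Z$ around $z$, I discard lc centers whose image avoids $z$. Since $\pi$ is proper, the images of the finitely many lc centers of $(X,B^+)$ are closed, so those not containing $z$ can be removed by shrinking. The inequality on $\reg(\cdot/Z\ni z)$ then follows from the inclusion of lc places and the PL-invariance of dual complexes \cite{dFKX17}.

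For the last claim, assume $\overline{\Ii}\subset\mathbb Q$ and that we want $B^+_Y\geq B_Y$. The coefficients of $B_Y$ are arbitrary, so I will first replace $B_Y$ by a rational perturbation. If instead $B_Y$ has coefficients in a DCC set whose closure is rational, I will use monotonic $N$-complements from \cite[Theorem 5.6]{HLS24}-type results, which give $B'^+\geq B'$ directly. I expect this step to be the main obstacle: obtaining $B_Y^+\geq B_Y$ requires controlling the coefficients of $B_Y$ itself, not only those of $B$. I would try to handle it by decomposing $B_Y$ as a convex combination of rational boundaries via the rational envelope (as in the lemma comparing $\Reg$ with $c_{\mathbb Q}$), and then applying monotonic complements with $N$ divisible by the denominators.
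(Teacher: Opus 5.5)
Your argument for the main assertion is correct and follows the paper's route: reduce via Theorem \ref{thm: bqf induces reduced BQF} to a $\mathbb Q$-factorial reduced model, apply boundedness of complements for $\mathbb R$-complementary pairs of Fano type with coefficients in $\Ii\cup\{1\}$, and push down crepantly.

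Your boundary $B'_0\vee B'^{=1}$ is the right choice, and it is more careful than what the paper writes. The paper complements $C_Y=g^{-1}_*B+\Exc(g)$ and asserts that every lc place of $(Y,B_Y)$ is an lc place of $(Y,C_Y)$. This fails whenever $B_Y^{=1}$ contains the strict transform of a prime divisor on $X$ whose $B$-coefficient is $<1$; for example, $X=Y=\mathbb P^2$, $B=0$, $B_Y$ a line. Keeping $B'^{=1}$ in the boundary is what forces $B'^+\geq B'^{=1}$ and so preserves the toroidal lc places. The paper itself does this in Lemma \ref{lem: bqf to qag}, via $g^{-1}_*(B\vee L)$ with $L=g_*B_Y^{=1}$. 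Your detour through a finite-coefficient perturbation $\Delta'$ is unnecessary.

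The gap is in the final claim. You leave it unproved, and the route you sketch cannot give it. Taking $N$ divisible by the denominators of rational boundaries approximating $B_Y$ makes $N$ depend on $B_Y$, not only on $d$ and $\Ii$.

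Read literally, the inequality $B_Y^+\geq B_Y$ is in fact false for every uniform $N$. Take $X=Y=\mathbb P^1$, $B=0$, $\Ii=\{0\}$, and $B_Y=\sum_{i=1}^{2N+1}(\tfrac{2}{2N+1}-\epsilon)p_i$ with $0<\epsilon\ll 1$. Then $(Y,B_Y)$ is a klt (hence qdlt) QF model with $-(K_Y+B_Y)$ ample. An $N$-complement $B^+\geq B_Y$, however, would have $2N+1$ coefficients in $\tfrac1N\mathbb Z_{>0}$, so its degree would exceed $2$.

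The symbol $B_Y^+$ is not even defined in the statement. The paper's proof shows that what is meant is $B^+\geq B$, i.e.\ a monotonic complement. In your setup this is immediate. The coefficients of $B'_0\vee B'^{=1}$ lie in $\Ii\cup\{1\}$, a DCC set with rational closure. The monotonic form of bounded complements (the paper invokes \cite[Theorem 1.2]{FM20}) therefore gives $B'^+\geq B'_0\vee B'^{=1}\geq h^{-1}_*B$, and hence $B^+=h_*B'^+\geq B$. No control over the coefficients of $B'$ or $B_Y$ is needed.
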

\begin{proof}
By Theorem \ref{thm: bqf induces reduced BQF}, we may assume that $Y$ is $\mathbb Q$-factorial and $(Y/Z,B_Y)$ is a reduced QF$^{\,\bir}$ model of $(X/Z,B)$ associated with $g$. Let $C_Y:=g^{-1}_*B+\Exc(g)$, then $B_Y\geq C_Y$ and $C_Y\in\Ii\cup\{1\}$. By \cite[Theorem 1.5]{HLS24}, over a neighborhood of $z$, there exists a positive integer $N$ depending only on $d$ and $\Ii$, such that $(Y/Z,C_Y)$ has an $N$-complement $(Y/Z,B_Y^+)$. Moreover, by \cite[Theorem 1.2]{FM20}, if  $\overline{\Ii}\subset\mathbb Q$, then we may take $B_Y^+\geq C_Y$. 

Let $B^+:=g_*B_Y^+$, then over a neighborhood of $z$, $(X,B^+)$ and $(Y,B_Y^+)$ are crepant, hence $N(K_X+B^+)\sim 0$ over a neighborhood of $z$ and $(X/Z\ni z,B^+)$ is an $N$-complement of $(X/Z\ni z,B)$. Moreover, if $\overline{\Ii}\subset\mathbb Q$, then we may take $B^+\geq B$. We have that any lc place of $(Y,B_Y)$ is an lc place of $(Y,C_Y)$, hence an lc place of $(Y,B_Y^+)$, and hence an lc place of $(X,B^+)$. The corollary follows.
\end{proof}

\subsection{Qdlt anti-ample models and birational strong complete regularity}

In the study of (birational) strong complete regularity thresholds, and in particular in the proof of the ACC in Section~\ref{sec: acc}, it is useful to work with models that behave well under the minimal model program.
With Theorem \ref{thm: bqf induces reduced BQF} in mind, we introduce \emph{qdlt anti-good log minimal models} and \emph{qdlt anti-ample models}, in the spirit of Shokurov's maximal models \cite[Construction 2]{Sho20}.
Qdlt anti-ample models can also be viewed as a Fano type analogue of Koll\'ar models appearing in K-stability; see Remark \ref{rem: kollar model}.

\begin{defn}[Qdlt anti-good log minimal model]\label{defn: qag model}
Let $(X/Z,B)$ be a pair. A \emph{qdlt anti-good log minimal model} (\emph{QAG model} for short) $\phi\colon (X,B)\dashrightarrow (X',B')$ of $(X/Z,B)$ consists of a birational map$/Z$ $\phi\colon X\dashrightarrow X'$ and a pair $(X'/Z,B')$ satisfying the following.
\begin{enumerate}
    \item $B'=\phi_*(B\vee L)+\Exc(\phi^{-1})$ for some reduced divisor $L$ on $X$.
    \item For any prime divisor $D$ on $X$ that is exceptional$/X'$, we have that 
    $$0<a(D,X',B')<1-\mult_D(B\vee L).$$
    In particular, $a(D,X',B')<a(D,X,B)\leq 1$, and no irreducible component of $L$ is contracted by $\phi$. 
    \item $(X',B')$ is qdlt, $-(K_{X'}+B')$ is semi-ample$/Z$, and $\Bb_+(-(K_{X'}+B')/Z)$ does not contain any lc center of $(X',B')$.
\end{enumerate}
We also say that $(X'/Z,B')$ is a QAG model of $(X/Z,B)$ associated with $(\phi,L)$.
\end{defn}

\begin{defn}[Qdlt anti-ample minimal model]\label{defn: qaa model}
Let $(X/Z,B)$ be a pair. A \emph{qdlt anti-ample model} (\emph{QAA model} for short) $\phi\colon (X,B)\dashrightarrow (X',B')$ of $(X/Z,B)$ consists of a birational map$/Z$ $\phi\colon X\dashrightarrow X'$ and a pair $(X'/Z,B')$ satisfying the following.
\begin{enumerate}
    \item $B'=\phi_*(B\vee L)+\Exc(\phi^{-1})$ for some reduced divisor $L$ on $X$.
    \item For any prime divisor $D$ on $X$ that is exceptional$/X'$, we have that 
    $$0<a(D,X',B')\leq 1-\mult_D(B\vee L),$$
    and if $a(D,X',B')=1$, then $\Center_{X'}D$ is not an lc center of $(X',B')$.
    In particular, $a(D,X',B')\leq a(D,X,B)\leq 1$, and no irreducible component of $L$ is contracted by $\phi$. 
    \item $(X',B')$ is qdlt and $-(K_{X'}+B')$ is ample$/Z$.
\end{enumerate}
We also say that $(X'/Z,B')$ is a QAA model of $(X/Z,B)$ associated with $(\phi,L)$.
\end{defn}

\begin{rem}\label{rem: kollar model}
Consider the case when $\pi\colon X\rightarrow Z$ is the identity morphism, $X$ is affine, and $(X,B)$ is a klt pair. In this case, a QAA model of $(X/X,B)$ associated with $(\phi,0)$ is nothing but a Koll\'ar model in the sense of \cite[Definition 3.7]{XZ25}. Note that when $\pi$ is the identity morphism, any Koll\'ar model is a reduced QF model, and hence a QF$^{\,\bir}$ model. However, the $X\not=Z$ case has significant difference with the $X=Z$. Indeed, a QAA model $(X'/Z,B')$ of $(X/Z,B)$ may not be a QF$^{\,\bir}$ model of $(X/Z,B)$ as the associated birational map $g:=\phi^{-1}\colon X'\dashrightarrow X$ may extract divisors.
\end{rem}

In the rest of this section, we show that there is a correspondence between QAA models, QAG models, and QF$^{\,\bir}$ models, and such correspondence preserves the structure of the dual complexes. In particular, we shall prove the following theorem:
\begin{thm}\label{thm: bscr computed by qag}
    Let $(X/Z,B)$ be a pair. Then 
    \begin{align*}
        \bSReg(X/Z,B)&=\sup\{\reg(X',B')\mid (X'/Z,B')\text{ is a QAG model of }(X/Z,B)\}\\
        &=\sup\{\reg(X',B')\mid (X'/Z,B')\text{ is a QAA model of }(X/Z,B)\}.
    \end{align*}
\end{thm}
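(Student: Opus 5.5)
The plan is to prove two inequalities: $\bSReg(X/Z,B)$ is at most both suprema, and each supremum is at most $\bSReg(X/Z,B)$. We may assume $\bSReg(X/Z,B)\geq 0$, or that some QAG/QAA model has non-negative regularity; otherwise all three quantities equal $-1$, with the convention $\reg=-1$ when there is no lc center. The main tool is the fact that the relevant pairs are of Fano type (Lemma \ref{lem: QF is ft}, or the analogous statement for QAG/QAA models, proved exactly as there). Together with the $\Bb_+$-lemma \cite[Lemma 3.10.11]{BCHM10}, this lets us run MMPs and perform extractions that are isomorphisms near the generic points of all lc centers. Then the dual complex is preserved by \cite[Proposition 11, Corollary 38]{dFKX17}.

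\emph{Step 1: $\bSReg\le$ both suprema.} Start with a QF$^{\,\bir}$ model computing $\bSReg$ and apply Theorem \ref{thm: bqf induces reduced BQF}. This gives a $\mathbb Q$-factorial reduced model $h\colon(X',B')\dashrightarrow(X,B)$ with the same dual complex. Put $\phi:=h^{-1}$; it contracts no divisor of $X$. Let $L$ be the strict transform on $X$ of the non-$h$-exceptional components of $B'^{=1}$, and set $C':=\phi_*(B\vee L)+\Exc(h)$. Then $C'\le B'$ and $C'^{=1}=B'^{=1}$, so $(X',C')$ is qdlt with the same dual complex.

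The difference $E:=B'-C'$ has coefficients $<1$. By the qdlt (toroidal) structure, $\Supp E$ contains no lc center. Hence $-(K_{X'}+C')=-(K_{X'}+B')+E$ is big$/Z$, and $\Bb_+$ of it contains no lc center. Run a $-(K_{X'}+C')$-MMP$/Z$ (possible since $X'$ is of Fano type) and pass to the ample model $X''$. By the $\Bb_+$-lemma, this is an isomorphism near the generic point of every lc center, so $(X'',C'')$ stays qdlt with the same dual complex and $-(K_{X''}+C'')$ is ample.

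Any contracted divisor $D$ satisfies $a(D,X'',C'')<a(D,X',C')=1-\mult_D(B\vee L)$. Moreover $a(D,X'',C'')>0$, since otherwise $D$ would be an lc place centered on an lc center where the map is an isomorphism. No component of $L$ or of $\Exc(h)$ is contracted, because these contain lc centers. Thus $X\dashrightarrow X''$ gives simultaneously a QAA model and a QAG model (ample implies semiample with $\Bb_+=\emptyset$), of the same regularity.

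\emph{Step 2: each supremum $\le\bSReg$.} Let $(X'/Z,B')$ be a QAA (resp. QAG) model associated with $(\phi,L)$. Let $\mathcal S$ be the set of divisors on $X$ contracted by $\phi$; each has $0<a(D,X',B')\le 1$. Since $X'$ is of Fano type, \cite{BCHM10} gives a $\mathbb Q$-factorial extraction $p\colon Y\to X'$ extracting exactly $\mathcal S$. Set $K_Y+B_Y=p^*(K_{X'}+B')$. Then every divisor of $X$ appears on $Y$, so $g\colon Y\dashrightarrow X$ extracts nothing. The $g$-exceptional divisors are exactly $\Exc(\phi^{-1})$, with coefficient $1$, so condition (4) is vacuous. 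Also $\mult_DB_Y=1-a(D,X',B')\ge\mult_D(B\vee L)\ge\mult_DB$.

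The key point, which I expect to be the main obstacle, is that $p$ is an isomorphism over the generic point of every lc center of $(X',B')$. Near such a generic point the pair is $\mathbb Q$-factorial toroidal with reduced boundary. There every divisor has log discrepancy $0$ (toroidal valuations) or $\ge1$, so no $D\in\mathcal S$ with $a<1$ has center there. The case $a=1$ is excluded by the extra hypothesis in Definition \ref{defn: qaa model}(2), and by strictness in Definition \ref{defn: qag model}(2). This toroidal discrepancy statement must be checked carefully, e.g.\ via \cite[Section 4]{dFKX17}.

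Granting it, $(Y,B_Y)$ is qdlt, crepant to $(X',B')$, with the same dual complex. Also $-(K_Y+B_Y)$ is nef and big$/Z$ with $\Bb_+\subseteq\Exc(p)\cup p^{-1}\Bb_+(-(K_{X'}+B')/Z)$, which avoids all lc centers. As in the proof of Lemma \ref{lem: qfactorial bqf model}, add a small effective divisor $E$ avoiding the lc centers to make $-(K_Y+B_Y+E)$ ample without changing the dual complex. This yields a QF$^{\,\bir}$ model with $\reg(Y,B_Y+E)=\reg(X',B')$, completing the proof.
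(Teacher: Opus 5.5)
Your architecture matches the paper's. Step~1 is Lemma~\ref{lem: bqf to qag} followed by Proposition~\ref{prop: qag to qaa}(1), and Step~2 collapses Proposition~\ref{prop: qag to qaa}(2) and Lemma~\ref{lem: bqf induce qag} into a single extraction. For QAG models the collapsed Step~2 is fine. Every contracted $D$ has $0<a(D,X',B')<1$. Near the generic point of an lc center, $K_{X'}+B'$ is Cartier (toroidal with reduced boundary), so log discrepancies there are integers, and no such $D$ has center containing an lc center. The genuine gap is your sentence that the case $a=1$ is excluded by Definition~\ref{defn: qaa model}(2). That condition only says $\Center_{X'}D$ is not \emph{itself} an lc center. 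It may properly \emph{contain} one, and then $p$ is not an isomorphism over it.

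No argument can close this gap, because with the definition as written the QAA equality is false. Let $Z=X'=\mathbb A^3$, $B'=\{xyz=0\}$, $C=\{x=0,\ y=z\}$, and $X=\mathrm{Bl}_C\mathbb A^3$ with exceptional divisor $D$. Let $B$ be the strict transform of $B'$, $L=0$, and $\phi$ the blow-down. Then $\Exc(\phi^{-1})=0$ and $a(D,X',B')=2-1=1=1-\mult_DB$. Moreover $C$ is not an lc center; it only contains the lc center $0$. So $(X'/Z,B')$ is a QAA model with $\reg(X',B')=2$, and your $p$ is $X\to X'$, which is not an isomorphism over $0$.

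Yet $(X/Z,B)$ has no QF$^{\,\bir}$ model. Suppose $\pi\colon Y\to Z$ were one. Then $D$ is a divisor on $Y$ with $\pi(D)=C\ni 0$, so $\pi^{-1}(0)$ is positive dimensional. By the negativity lemma, $F:=K_Y+B_Y-\pi^*(K_Z+\pi_*B_Y)\geq 0$, and $\pi^{-1}(0)\subseteq\Supp F$ because $-(K_Y+B_Y)$ is $\pi$-ample. Since $\pi_*B_Y\geq B'$, this gives $a(\mathrm{ord}_0,Y,B_Y)<a(\mathrm{ord}_0,Z,\pi_*B_Y)\leq a(\mathrm{ord}_0,\mathbb A^3,B')=0$, contradicting lc-ness. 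So $\bSReg(X/Z,B)=-1$.

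The paper's proof of Proposition~\ref{prop: qag to qaa}(2) has the same defect. It extracts exactly this $D$ and concludes the extraction is an isomorphism over generic points of lc centers only because $D$ is not an lc place. So you are not missing an idea the paper supplies. What does hold is the QAG equality and the inequality $\bSReg(X/Z,B)\leq$ the supremum over QAA models, which is the only direction used in Section~\ref{sec: complement}. The QAA equality is restored if Definition~\ref{defn: qaa model}(2) requires $\Center_{X'}D$ to contain no lc center; your Step~2 then goes through.

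Step~1 needs two smaller repairs.
\begin{enumerate}
\item A $-(K_{X'}+C')$-MMP lowers log discrepancies on the modified locus. The $\Bb_+$ argument alone therefore does not show that $(X'',C'')$ is lc with no new lc centers, which you need both for qdlt-ness and for $a(D,X'',C'')>0$. As in the paper, take a general $0\leq H\sim_{\mathbb R,Z}-(K_{X'}+B')$. The MMP is then $(K_{X'}+B'+H)$-trivial, hence crepant, and the pushforward of $B'+H$dominates $C''$. So every lc place of $(X'',C'')$ is an lc place of $(X',B'+H)$, hence of $(X',C')$.
\item Divisors contracted by the final ample-model morphism keep their log discrepancy, so $X''$ only satisfies the non-strict inequality of Definition~\ref{defn: qaa model}(2). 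For a QAG model, stop at the good minimal model, as in Lemma~\ref{lem: bqf to qag}.
\end{enumerate}
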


We first establish the correspondence between QAA models and QAG models.

\begin{prop}\label{prop: qag to qaa}
 Let $(X/Z,B)$ be an lc pair and $L$ a reduced divisor on $X$.
 \begin{enumerate}
     \item Let $(X'/Z,B')$ be a QAG model of $(X/Z,B)$ associated with $(\phi,L)$ and $f\colon X'\rightarrow Y$ the ample model$/Z$ of $-(K_{X'}+B')$. Then $(Y/Z,B_Y:=f_*B')$ is a QAA model of $(X/Z,B)$ associated with $(f\circ\phi,L)$.
     \item Let $(Y/Z,B_Y)$ be a QAA model of $(X/Z,B)$ associated with $(\psi,L)$. Then there exists a projective birational morphism $f\colon X'\rightarrow Y$ satisfying the following:
     \begin{enumerate}
         \item $X'$ is $\mathbb Q$-factorial.
         \item The divisors contracted by $f$ are exactly the prime divisors $D$ such that $\Center_XD$ is a divisor and $a(D,X,B)=a(D,Y,B_Y)$.
     \end{enumerate}
     In particular, $(X'/Z,B')$ is a $\mathbb Q$-factorial QAG model of $(X/Z,B)$ associated with $(f^{-1}\circ\psi,L)$, where
     $$K_{X'}+B':=f^*(K_Y+B_Y).$$
 \end{enumerate}
\end{prop}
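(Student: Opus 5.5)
For (1), take the ample model $f\colon X'\to Y$ of $-(K_{X'}+B')$ over $Z$; it exists because $-(K_{X'}+B')$ is semi-ample$/Z$. We have $K_{X'}+B'=f^*(K_Y+B_Y)$, so the two pairs are crepant, $(Y,B_Y)$ is lc, and $-(K_Y+B_Y)$ is ample$/Z$.

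\emph{Qdlt.} Since $\Bb_+(-(K_{X'}+B')/Z)$ contains no lc center of $(X',B')$, the exceptional locus of $f$ contains no lc center. Hence $f$ is an isomorphism near the generic point of every lc center, and lc centers of $(Y,B_Y)$ correspond bijectively to those of $(X',B')$. The open set $U'$ witnessing qdlt for $(X',B')$ can be shrunk to avoid $\Exc(f)$ while still containing these generic points, and its image gives the qdlt open set for $(Y,B_Y)$.

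\emph{Condition (1).} Every divisor extracted by $(f\circ\phi)^{-1}$ is extracted by $\phi^{-1}$, since $f$ is a morphism. Pushing forward gives $B_Y=(f\circ\phi)_*(B\vee L)+\Exc((f\circ\phi)^{-1})$.

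\emph{Condition (2).} Let $D$ be a prime divisor on $X$ exceptional over $Y$. Crepancy gives $a(D,Y,B_Y)=a(D,X',B')$.
\begin{itemize}
\item If $D$ is already contracted by $\phi$, the strict inequalities of the QAG definition give (2) directly.
\item If $D$ survives on $X'$ but is contracted by $f$, then $a(D,X',B')=1-\mult_{D}B'=1-\mult_D(B\vee L)$, giving $\le$. Here $\mult_D(B\vee L)<1$: otherwise $D$ would be an lc center contracted by $f$, which is excluded. So $a>0$.
\item If moreover $a(D,Y,B_Y)=1$, then $\Center_Y D\subset f(\Exc f)$. Any lc center of $(Y,B_Y)$ equal to it would come from an lc center of $(X',B')$ contained in $\Exc(f)$, which is again excluded.
\end{itemize}

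For (2), start from a QAA model $\psi\colon (X,B)\dashrightarrow (Y,B_Y)$. Let $\mathcal S$ be the set of prime divisors $D$ on $X$ with $a(D,X,B)=a(D,Y,B_Y)$ and exceptional over $Y$. By QAA(2) and the lc property, each such $D$ has $a(D,Y,B_Y)\in(0,1]$.

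\emph{Extraction.} Extract exactly $\mathcal S$ by a $\mathbb Q$-factorial model $f\colon X'\to Y$. Use \cite[Corollary 1.4.3]{BCHM10} (or its variant for pairs with a finite set of divisors of log discrepancy $\le1$) after passing to a dlt modification of a klt perturbation. $Y$ is of Fano type$/Z$ by a variant of Lemma~\ref{lem: QF is ft}, since $(Y,B_Y)$ is qdlt with $-(K_Y+B_Y)$ ample. So such extractions exist, and also $\mathbb Q$-factorializations that are isomorphisms near the generic points of lc centers, by \cite[Lemma 36]{dFKX17}.

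\emph{The main obstacle.} Divisors $D\in\mathcal S$ with $a=1$ are not lc places, so $(Y,B_Y+\epsilon\,\cdot)$-type perturbation tricks need care. Condition (2) guarantees their centers are not lc centers, so I would choose the extraction over a neighbourhood of each lc center generic point to be an isomorphism.

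\emph{Verifying the QAG conditions.}
\begin{itemize}
\item $K_{X'}+B'=f^*(K_Y+B_Y)$ with $\mult_D B'=1-a(D,Y,B_Y)=\mult_D B\le\mult_D(B\vee L)$. Since $\mathcal S$ consists of divisors where $L$ has coefficient $0$ (otherwise $a(D,Y,B_Y)\le 0$), we get $\mult_D B'=\mult_D(B\vee L)$, and $B'$ has the form required in QAG(1).
\item QAG(2): divisors on $X$ still contracted over $X'$ are those with strict inequality $a(D,Y,B_Y)<a(D,X,B)$. Combined with the QAA bound and $L$-coefficients, this gives the strict inequalities.
\item QAG(3): $-(K_{X'}+B')=f^*(\text{ample})$ is semi-ample, and $\Bb_+\subset\Exc(f)$ avoids lc centers. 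Qdlt follows because $f$ is an isomorphism near the lc center generic points, and $B'\ge0$.
\end{itemize}
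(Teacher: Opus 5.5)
\textbf{Part (1).} This is correct and is the paper's argument, with the omitted verifications supplied.

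\textbf{Part (2).} You follow the paper's route: a small $\mathbb Q$-factorialization that is an isomorphism at the generic points of lc centers, then a BCHM extraction of exactly $\mathcal S$. However, the step you single out as the main obstacle is not resolved by what you write, and in the generality of the statement it cannot be.

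Knowing that $\Center_Y D$ is not an lc center does not let you make the extraction an isomorphism over the generic point $\eta_V$ of an lc center $V$. If $V\subsetneq\Center_Y D$, then $\eta_V\in f(\Exc f)$ for every model extracting $D$. What the argument actually needs is that $\Center_Y D$ contains no lc center.
\begin{itemize}
\item When $0<a(D,Y,B_Y)<1$ this is true, but it needs proof. Near $\eta_V$ the qdlt pair is toroidal with $B_Y=B_Y^{=1}$, so $K_Y+B_Y$ is Cartier there. Hence any divisor whose center contains $V$ has integral log discrepancy.
\item When $a(D,Y,B_Y)=1$ it can fail, and then the conclusion of (2) is false.
\end{itemize}

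\textbf{Counterexample.} Take $Z$ a point, $Y=\mathbb P^3$, $B_Y=\{xyz=0\}$, and $W=\{x=0,\ y=z\}$. Let $\psi\colon X=\mathrm{Bl}_WY\to Y$ have exceptional divisor $D$, and set $B=\psi^{-1}_*B_Y$.
\begin{itemize}
\item We have $K_X+B=\psi^*(K_Y+B_Y)$ and $a(D,X,B)=a(D,Y,B_Y)=1$. Since $W$ is not an lc center, $(Y/Z,B_Y)$ is a QAA model of $(X/Z,B)$ associated with $(\psi,0)$.
\item However, $W$ contains the lc center $[0:0:0:1]$.
\item Since $\rho(X/Y)=1$, any $\mathbb Q$-factorial $f\colon X'\to Y$ whose exceptional divisor is exactly $D$ is isomorphic to $\psi$ over $Y$, and $B'=B$.
\item Work in the chart with coordinates $(x,s,z)$, where $y-z=xs$. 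The point $p=\{x=s=z=0\}$ is the center of the toric lc place with weights $(1,2,2)$ on $(x,y,z)$. Yet near $p$ the boundary is $\{z=0\}+\{z+xs=0\}$, two tangent surfaces.
\end{itemize}
So $(X',B')$ is not qdlt, and hence is not a QAG model.

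\textbf{Relation to the paper.} The paper's proof has the same gap. It infers that, because $\Exc(f')$ contains no lc place, $f'$ is an isomorphism over the generic points of lc centers; this inference fails in the example above.

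\textbf{Repair.} Strengthen condition (2) of the QAA definition: when $a(D,Y,B_Y)=1$, require that $\Center_Y D$ contain no lc center. The QAA models produced in part (1) satisfy this.
\begin{itemize}
\item For $D$ surviving on $X'$, $\Center_Y D\subset f(\Exc f)$, and your argument shows this set misses the generic points of lc centers of $(Y,B_Y)$.
\item For $D$ contracted by $\phi$, one has $a<1$.
\end{itemize}
Under this hypothesis, the argument for (2), both yours and the paper's, goes through.
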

\begin{proof}
    (1) Since $\Bb_+(-(K_{X'}+B')/Z)$ does not contain any lc center of $(X',B')$, $f$ is an isomorphism near the generic point of any lc center of $(X',B')$. (1) follows.

    (2) By \cite[Lemma 36]{dFKX17}, there exists a small $\mathbb Q$-factorialization $\tau\colon Y'\rightarrow Y$ that is an isomorphism near the generic point of any lc center of $(Y,B_Y)$. By \cite[Corollary 1.4.3]{BCHM10}, there exists a birational morphism $f'\colon X'\rightarrow Y'$, such that $X'$ is $\mathbb Q$-factorial, and the divisors contracted by $f'$ are exactly the prime divisors $D$ such that $\Center_XD$ is a divisor and $a(D,X,B)=a(D,Y,B_Y)$. 
    
    Since $\Exc(f')$ of pure codimension $1$ (cf. \cite[Lemma 3.6.2(3)]{BCHM10}) and does not contain any lc place of $(Y,B_Y)$, $f'$ is an isomorphism over the generic point of any lc center of $(Y,B_Y)$ (cf. \cite[Corollary 4.42(3)]{Kol13}). Thus $f:=\tau\circ f'$ is an isomorphism over the generic point of any lc center of $(Y,B_Y)$. Let 
    $$K_{X'}+B':=f^*(K_Y+B_Y),$$ 
    then $(X',B')$ is $\mathbb Q$-factorial qdlt, $-(K_{X'}+B')$ is semi-ample$/Z$, and $\Bb_+(-(K_{X'}+B')/Z)$ does not contain any lc center of $(X',B')$. By our construction, $(X'/Z,B')$ is a $\mathbb Q$-factorial QAG model of $(X/Z,B)$ associated with $(f^{-1}\circ\psi,L)$. (2) follows.
\end{proof}

Next, we show that any QF$^{\,\bir}$ model induces a QAG model preserving the dual complex.

\begin{lem}\label{lem: bqf to qag}
    Let $(X/Z,B)$ be an lc pair and $g\colon (Y,B_Y)\dashrightarrow (X,B)$ a QF$^{\,\bir}$ model of $(X/Z,B)$. Then there exists a $\mathbb Q$-factorial QAG model $(X'/Z',B')$ of $(X/Z,B)$ associated with $(\phi,L:=g_*B_Y^{=1})$ satisfying the following.
    \begin{enumerate}
        \item For any $\phi^{-1}$-exceptional prime divisor $D$, $\Center_YD$ is an irreducible component of $B_Y^{=1}$.
        \item Any lc place of $(Y,B_Y)$ is an lc place of $(X',B')$ and any lc place of $(X',B')$ is an lc place of $(Y,B_Y)$, and $\mathcal{D}(Y,B_Y)$ and $\mathcal{D}(X',B')$ are PL-homeomorphic.
    \end{enumerate}
\end{lem}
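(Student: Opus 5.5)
First replace $g$ by a reduced QF$^{\,\bir}$ model using Theorem \ref{thm: bqf induces reduced BQF}. It preserves lc places and the dual complex up to PL-homeomorphism, and it makes $Y$ $\mathbb Q$-factorial. It also ensures that every $g$-exceptional divisor is a component of $B_Y^{=1}$, so condition (1) will hold automatically for divisors extracted by $\phi^{-1}$ coming from $Y$. I do need to check that $L=g_*B_Y^{=1}$ is unchanged, or at least that it changes compatibly. This should follow from part (3) of that theorem: the new map only contracts $g$-exceptional divisors not in $B_Y^{=1}$, so the non-exceptional components of $B_Y^{=1}$ survive.

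Next I build $(X',B')$. As in Lemma \ref{lem: bscr obtained}, choose $0\le H_Y\sim_{\mathbb R,Z}-(K_Y+B_Y)$ general, so that $(Y,\Delta_Y:=B_Y+H_Y)$ is qdlt with the same dual complex and $K_Y+\Delta_Y\sim_{\mathbb R,Z}0$. The pair $(X,\Delta:=g_*\Delta_Y)$ is then crepant to it. The divisors of $X$ that I want to contract are the prime $D$ on $X$ with $a(D,Y,B_Y)<a(D,X,B\vee L)$. Equivalently, these are the divisors of $X$ that are exceptional over $Y$, or that are non-exceptional but have $\mult_DB_Y>\mult_D(B\vee L)$. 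To remove them, take a $\mathbb Q$-factorial model $X'\to Y$ via \cite[Corollary 1.4.3]{BCHM10} that extracts exactly the prime divisors $D$ on $X$ exceptional over $Y$ with $0<a(D,Y,B_Y)<1-\mult_D(B\vee L)$. The first inequality needs $D$ not to be an lc place of $(Y,B_Y)$. The second uses $a(D,Y,B_Y)\le a(D,X,\Delta)$, with strict inequality for the divisors not extracted.

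I also need $f\colon X'\to Y$ to be an isomorphism over the generic points of lc centers. The plan is to mimic Proposition \ref{prop: qag to qaa}(2): first pass to the small $\mathbb Q$-factorialization of \cite[Lemma 36]{dFKX17}, then extract only non-lc places. Set $K_{X'}+B':=f^*(K_Y+B_Y)$. Then $-(K_{X'}+B')$ is semi-ample$/Z$, and $\Bb_+$ misses the lc centers because $f$ is an isomorphism there, so Definition \ref{defn: qag model}(3) holds. Qdlt-ness transfers for the same reason. The crepant statement (2) on lc places and dual complexes follows from \cite[Proposition 11, Corollary 38]{dFKX17}.

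The main obstacle is checking Definition \ref{defn: qag model}(1), namely that $B'=\phi_*(B\vee L)+\Exc(\phi^{-1})$ exactly. My guess is that the paper's proof instead runs an MMP on $X$ or on a dlt model, but I would attempt the direct verification. For a divisor $D$ on $X'$ that is also on $X$ and not exceptional over $Y$, I need $\mult_DB_Y=\mult_D(B\vee L)$. This is not automatic, since $B_Y$ may have coefficients strictly between $\mult_DB$ and $1$. To fix this I would change the boundary at the start: replace $B_Y$ by $g^{-1}_*B\vee B_Y^{=1}+\Exc(g)$ plus a small ample correction. Ampleness is kept by perturbing $H_Y$ as in the proof of Theorem \ref{thm: bqf induces reduced BQF}: write the difference as $C'$ plus an ample divisor, with $C'$ avoiding lc centers. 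After this change the coefficients on non-exceptional divisors become exactly $\mult(B\vee L)$, and the exceptional ones become $1$, matching $\Exc(\phi^{-1})$. The divisors extracted by $f$ get coefficient $1-a(D,Y,B_Y)\neq 1$. This is a problem, because the definition forces coefficient $0$ or $1$ on $\phi$-related divisors. So instead I would take $X'$ to extract only the divisors of $X$, and put these among the divisors $D$ of $X$ that are not exceptional over $X'$. Then Definition \ref{defn: qag model}(2) must be arranged so that the divisors of $X$ exceptional over $X'$ satisfy the strict inequality. Handling this bookkeeping consistently is where I expect the real work to lie.
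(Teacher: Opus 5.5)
Your first steps (reducing to a $\mathbb Q$-factorial reduced model via Theorem \ref{thm: bqf induces reduced BQF}, and choosing $H_Y$ with $\Delta_Y=B_Y+H_Y$) agree with the paper. After that there is a genuine gap: you never construct $X'$, and the constructions you sketch cannot work.

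First, the extraction step is empty. Since $g$ extracts no divisor, every prime divisor of $X$ already lies on $Y$, so nothing on $X$ is exceptional over $Y$. Your $f\colon X'\to Y$ is therefore just $Y$ itself, with $B'=B_Y$. Definition \ref{defn: qag model}(1) would then force $B_Y=C_Y:=g^{-1}_*(B\vee L)+\Exc(g)$. This fails as soon as $B_Y$ has a coefficient strictly between $\mult_D(B\vee L)$ and $1$.

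More fundamentally, the required $X'$ must in general contract divisors of $Y$, and no model over $Y$ can do that. Take $X=Y=\mathbb F_n$ with $n\geq 3$, $Z$ a point, $B=0$, and $B_Y=cE$ with $E$ the negative section and $\frac{n-2}{n}<c<1$. Then $L=C_Y=0$ and $-K_Y\cdot E=2-n<0$. Every QAG model associated with $(\phi,0)$ must contract $E$. Otherwise, by Lemma \ref{lem: negativity qag model}, the strict transform $E_W$ is not in $\Supp F$, so $-p^*(K_{X'}+B')\cdot E_W\leq -K_X\cdot E<0$, contradicting semi-ampleness.

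The same example defeats your proposed fix. Once the boundary is reset to $C_Y$, the class $-(K_Y+C_Y)\sim_{\mathbb R,Z}H_Y+(B_Y-C_Y)$ is big with $\Bb_+$ avoiding the lc centers, but it is not nef. No effective ``ample correction'' on $Y$ restores positivity without changing the coefficients you need.

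Two further problems. Your selection rule $a(D,Y,B_Y)<a(D,X,B\vee L)$ is not the right criterion. For $X=Y=\mathbb P^2$, $B=0$, $B_Y=cH$ with $0<c<1$, it asks you to contract the line $H$, yet $(\mathbb P^2,0)$ is already a QAG model. Also, the crepant pullback $K_{X'}+B'=f^*(K_Y+B_Y)$ you rely on for statement (2) is incompatible with QAG(1). In the correct construction, $(X',B')$ is \emph{not} crepant to $(Y,B_Y)$.

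The missing idea is to run an MMP on $Y$ for $-(K_Y+C_Y)$. For the reduced model one has $B_Y\geq C_Y$ and $B_Y^{=1}=C_Y^{=1}$. Put $G_Y:=\Delta_Y-C_Y=H_Y+(B_Y-C_Y)\geq 0$. For $0<\epsilon\ll1$, the pair $(Y,\Delta_Y+\epsilon G_Y)$ is $\mathbb Q$-factorial qdlt with the same dual complex, and
$K_Y+\Delta_Y+\epsilon G_Y\sim_{\mathbb R,Z}\epsilon G_Y\sim_{\mathbb R,Z}-\epsilon(K_Y+C_Y)$.
Since $Y$ is of Fano type over $Z$ (Lemma \ref{lem: QF is ft}), this MMP$/Z$ terminates with a good minimal model $\psi\colon Y\dashrightarrow X'$. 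Set $B':=\psi_*C_Y$. The required properties then follow.

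\emph{QAG(1)} is automatic, because $\psi$ extracts nothing.

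\emph{QAG(2)}: log discrepancies with respect to $C_Y$ strictly drop on divisors contracted by a $-(K_Y+C_Y)$-MMP, which gives $a(D,X',B')<1-\mult_D(B\vee L)$. The value $a(D,X',B')=0$ is impossible. It would make $D$ an lc place of $(Y,\Delta_Y)$, hence a component of $\Delta_Y^{=1}=C_Y^{=1}$, contradicting $a(D,Y,C_Y)>a(D,X',B')$.

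\emph{QAG(3)} follows from semi-ampleness of $\psi_*G_Y$ and the qdlt version of \cite[Lemma 3.10.11(2)]{BCHM10}. This applies because $\Bb_+(G_Y/Z)\subset\Supp(B_Y-C_Y)$ misses the lc centers.

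\emph{Statement (2)} comes from $\psi$ being $(K_Y+\Delta_Y)$-trivial, so that $(Y,\Delta_Y)$ and $(X',\psi_*\Delta_Y)$ are crepant, combined with $B'^{=1}=(\psi_*\Delta_Y)^{=1}$.

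\emph{Statement (1)}: every $\phi^{-1}$-exceptional divisor is the image of a $g$-exceptional divisor, and such divisors lie in $B_Y^{=1}$ for a reduced model.
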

\begin{proof}
By Theorem \ref{thm: bqf induces reduced BQF}, we may assume that $(Y/Z,B_Y)$ is a $\mathbb Q$-factorial reduced QF$^{\,\bir}$ model of $(X/Z,B)$. Let $$C_Y:=g^{-1}_*(B\vee L)+\Exc(g),$$ 
then $B_Y\geq C_Y$ and $B_Y^{=1}=C_Y^{=1}$. Let $A_Y:=-(K_Y+B_Y)$, then there exists $0\leq H_Y\sim_{\mathbb R}A_Y$ such that $(Y,\Delta_Y:=B_Y+A_Y)$ is $\mathbb Q$-factorial qdlt and $$\mathcal{D}(Y,\Delta_Y)=\mathcal{D}(Y,B_Y)=\mathcal{D}(Y,C_Y).$$
Let $G_Y:=\Delta_Y-C_Y$. Then $G_Y\geq 0$, and for any $0<\epsilon\ll 1$, $(Y,\Delta_Y+\epsilon G_Y)$ is $\mathbb Q$-factorial qdlt and $\mathcal{D}(Y,\Delta_Y+\epsilon G_Y)=\mathcal{D}(Y,C_Y)$. We have
$$K_Y+\Delta_Y+\epsilon G_Y\sim_{\mathbb R,Z}\epsilon G_Y\quad \text{and}\quad K_Y+C_Y\sim_{\mathbb R,Z}-G_Y.$$
Thus we may run a $(K_Y+\Delta_Y+\epsilon G_Y)$-MMP$/Z$ for some $0<\epsilon\ll 1$, which is also a $-(K_Y+C_Y)$-MMP$/Z$, which terminates with a good minimal model $(X'/Z,\Delta'+\epsilon G')$ of $(Y/Z,\Delta_Y+\epsilon G_Y)$ associated with birational map$/Z$ $\psi\colon Y\dashrightarrow X'$ and $\phi\colon X\dashrightarrow X'$, where $\Delta'$ and $G'$ are the images of $\Delta_Y$ and $G_Y$ on $X'$ respectively. Let $B'$ be the image of $C_Y$ on $X'$. We will show that $\phi\colon (X,B)\dashrightarrow (X',B')$ satisfies our requirements.

Since $\psi$ does not extract any divisor, for any $\phi^{-1}$-exceptional prime divisor $D$, $\Center_YD$ is exceptional$/X$, so $\Center_YD$ is an irreducible component of $\Exc(g)\subset C_Y^{=1}=B_Y^{=1}$. This implies (1). Since $(Y,\Delta_Y+\epsilon G_Y)$ is $\mathbb Q$-factorial qdlt, $(X',\Delta'+\epsilon G')$ is $\mathbb Q$-factorial qdlt. We have
$$\mathcal{D}(Y,B_Y)=\mathcal{D}(Y,\Delta_Y)$$
and
$$\mathcal{D}(X',B')=\mathcal{D}(B'^{=1})=\mathcal{D}(\Delta'^{=1})=\mathcal{D}(X',\Delta').$$
Moreover, $\psi$ is $(K_Y+\Delta_Y)$-trivial, so $(Y,\Delta_Y)$ and $(X',\Delta')$ are crepant. Thus for any prime divisor $D$ over $X$, $D$ is an lc place of $(Y,\Delta_Y)$ if and only if $D$ is an lc place of $(X',\Delta')$. Moreover, \cite[Proposition 11, Corollary 38]{dFKX17}, $\mathcal{D}(X',\Delta')$ and $\mathcal{D}(Y,\Delta_Y)$ are PL-homeomorphic.
This implies (2).

We are left to show that $(X'/Z,B')$ is a $\mathbb Q$-factorial QAG model of $(X/Z,B)$ associated with $(\phi,L)$. $\mathbb Q$-factoriality of $X'$ is clear. We have
$$B'=\psi_*(g^{-1}_*(B\vee L)+\Exc(g))=\phi_*(B\vee L)+\Exc(\phi^{-1})$$
as $\psi$ does not extract any divisor. This implies Definition \ref{defn: qag model}(1). 

Since $\tau$ is a sequence of steps of a $-(K_Y+C_Y)$-MMP, for any prime divisor $D$ on $X$ that is exceptional$/X'$, we have
$$1-\mult_D(B\vee L)=1-\mult_Dg_*C_Y=a(D,Y,C_Y)>a(D,X',B')\geq 0.$$
Moreover, if $a(D,X',B')=0$, then $$a(D,X',\Delta')=a(D,X',\Delta'+\epsilon G')=0,$$
hence 
$$a(D,Y,B_Y+\epsilon G_Y)=a(D,Y,\Delta_Y)=0.$$ 
Thus $a(D,Y,C_Y)=0$, a contradiction. Hence $a(D,X',B')\geq 0$, which implies Definition \ref{defn: qag model}(2). 

By our discussions above, $(X',B')$ is qdlt and $-(K_{X'}+B')$ is semi-ample$/Z$. Since 
$$G_Y=H_Y+(B_Y-C_Y)$$
and $H_Y$ is ample$/Z$, $\Bb_+(G_Y/Z)$ does not contain any lc center of $(Y,B_Y)$. Since $\mathcal{D}(Y,\Delta_Y+\epsilon G_Y)=\mathcal{D}(Y,B_Y)$, $\Bb_+(G_Y/Z)$ does not contain any lc center of $(Y,\Delta_Y+\epsilon G_Y)$. Since $\psi$ is a sequence of steps of a $(K_Y+\Delta_Y+\epsilon G_Y)$-MMP$/Z$, by (the qdlt version of) \cite[Lemma 3.10.11(2)]{BCHM10}, $\Bb_+(G'/Z)$ does not contain any lc center of $(X',\Delta'+\epsilon G')$, hence  $$\Bb_+(G'/Z)=\Bb_+(-(K_{X'}+B')/Z)$$ does not contain any lc center of $(X',\Delta'+\epsilon G')$. Since $0<\epsilon\ll 1$,
$$\mathcal{D}(X',\Delta'+\epsilon G')=\mathcal{D}\left((\Delta'+\epsilon G')^{=1}\right)=\mathcal{D}(\Delta'^{=1})=\mathcal{D}(X',\Delta'),$$ 
this implies Definition \ref{defn: qag model}(3). The lemma follows.
\end{proof}

Finally, we show that any QAG model induces a QF$^{\,\bir}$ model which preserves the dual complex. Together with Lemma \ref{lem: bqf to qag}, we establish a two-side correspondence between  QAG and QF$^{\,\bir}$ models. We first prove the following lemma.

\begin{lem}\label{lem: negativity qag model}
    Let $(X/Z,B)$ be a pair and $(X'/Z,B')$ a QAG model of $(X/Z,B)$ associated with $(\phi,L)$. Let $p\colon W\rightarrow X'$ and $q\colon W\rightarrow X$ be a resolution of indeterminacy of $\phi$. Then
    $$p^*(K_{X'}+B')=q^*(K_{X}+B)+F$$
    for some $F\geq 0$ such that $q_*F=L-B\wedge L+P$ for some $P\geq 0$ such that $\Supp P=\Exc(\phi)$.
\end{lem}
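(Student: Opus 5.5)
The plan is to apply the negativity lemma over $X$ to the difference $F$, and then to compute $q_*F$ one coefficient at a time using Definition \ref{defn: qag model}. Define $F$ by
$$F:=p^*(K_{X'}+B')-q^*(K_X+B).$$
Since $-(K_{X'}+B')$ is semi-ample$/Z$, hence nef$/Z$, the divisor $-F$ is nef$/X$ modulo the pullback $q^*(K_X+B)$, which is trivial$/X$. Precisely, $-F\equiv_X -p^*(K_{X'}+B')$ is nef over $X$. By the negativity lemma applied to $q$, the inequality $F\ge 0$ will follow once we know $q_*F\ge 0$. So the argument reduces to computing $q_*F$ coefficientwise on $X$.

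For a prime divisor $D$ on $X$, we have $\mult_D q_*F=a(D,X,B)-a(D,X',B')$. I will treat two cases.
\begin{enumerate}
\item If $D$ is not contracted by $\phi$, then $D$ is a divisor on $X'$. Definition \ref{defn: qag model}(1) gives $\mult_D B'=\mult_D(B\vee L)$. Hence
$$\mult_D q_*F=\mult_D(B\vee L)-\mult_D B=\mult_D(L-B\wedge L).$$
Here I use the identity $B\vee L-B=L-B\wedge L$, valid since $L$ is reduced: where $L$ has coefficient $1$, the left side is $\max(b,1)-b$ and the right side is $1-\min(b,1)$, and these agree; elsewhere both sides vanish.
\item If $D$ is exceptional$/X'$, then Definition \ref{defn: qag model}(2) gives $a(D,X',B')<1-\mult_D(B\vee L)$. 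Therefore
$$\mult_D q_*F>1-\mult_D(B\vee L)-(1-\mult_D B)=\mult_D B-\mult_D(B\vee L),$$
equivalently $\mult_D q_*F-\mult_D(L-B\wedge L)>0$, using the same identity as in case (1).
\end{enumerate}

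Now set $P:=q_*F-(L-B\wedge L)$. By case (1), $P$ vanishes along every non-contracted divisor. By case (2), $P$ has strictly positive coefficient along every $\phi$-exceptional divisor. Hence $P\ge 0$ and $\Supp P=\Exc(\phi)$.

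It remains to check that $L-B\wedge L\ge 0$. This is immediate if $B\le 1$ along $L$. If some coefficient of $B$ exceeds $1$, the pair is not lc, but $\mult_D(L-B\wedge L)=1-\min(b,1)\ge 0$ holds regardless. So $q_*F\ge 0$, and the negativity lemma then yields $F\ge 0$.

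The only delicate point is the negativity step. The lemma must be applied on $W$ over $X$, and we need $-F$ to be $q$-nef. This holds because $-p^*(K_{X'}+B')$ is the pullback of a divisor that is nef$/Z$, and $X\to Z$ factors the situation, so $-F$ is nef over $X$ as well. Everything else is coefficient bookkeeping.
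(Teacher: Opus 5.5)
Your proof is correct and follows the paper's argument: you compute $q_*F$ componentwise using Definition \ref{defn: qag model}(1) for non-contracted divisors and (2) for $\phi$-exceptional ones, then apply the negativity lemma over $X$, since $-F\equiv_X -p^*(K_{X'}+B')$ is $q$-nef. One small sign slip: in case (2) the middle expression should be $\mult_D(B\vee L)-\mult_D B$, not its negative, although the ``equivalently'' inequality you then state is the correct one.
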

\begin{proof}
We may write $q_*F=P+Q$ where all irreducible components of $P$ are $\phi$-exceptional and no irreducible component of $Q$ is $\phi$-exceptional. By Definition \ref{defn: qag model}(1), $Q=L-B\wedge L\geq 0$. By Definition \ref{defn: qag model}(2), $P\geq 0$ and $\Supp P\supset\Exc(\phi)$. Thus $q_*F\geq 0$. Since
$$-F\sim_{\mathbb R,X}-p^*(K_{X'}+B')$$
is semi-ample$/X$, by the negativity lemma, $F\geq 0$.
\end{proof}

\begin{lem}\label{lem: bqf induce qag}
Let $(X/Z,B)$ be an lc pair, $L$ a reduced divisor on $X$, and $(X'/Z,B')$ a QAG model of $(X/Z,B)$ associated with $(\phi,L)$. Then there exists a $\mathbb Q$-factorial reduced QF$^{\,\bir}$ model $g\colon (Y,B_Y)\dashrightarrow (X,B)$ of $(X/Z,B)$ satisfying the following.
\begin{enumerate}
    \item The associated birational map $\psi\colon Y\dashrightarrow X'$ is a morphism, $\psi$ does not extract any divisor, and $\psi_*B_Y=B'$.
    \item Any $\psi$-exceptional prime divisor is not an lc place of $(X',B')$.
    \item Any lc place of $(Y,B_Y)$ is an lc place of $(X',B')$ and any lc place of $(X',B')$ is an lc place of $(Y,B_Y)$, and $\mathcal{D}(Y,B_Y)$ and $\mathcal{D}(X',B')$ are PL-homeomorphic.
    \item $B_Y\geq g^{-1}_*(B\vee L)+\Exc(g)$.
\end{enumerate}
\end{lem}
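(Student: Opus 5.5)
The plan is to build $Y$ from the QAG model $(X',B')$ by extracting, over $X'$, exactly the divisors of $X$ contracted by $\phi$, pulling back $K_{X'}+B'$ crepantly, and then perturbing the boundary slightly to make the anti-log canonical divisor ample. By Definition \ref{defn: qag model}(2), every $\phi$-exceptional prime divisor $D$ on $X$ has $0<a(D,X',B')<1-\mult_D(B\vee L)\le 1$. So these divisors are not lc places of $(X',B')$, and they may be extracted with coefficients in $(0,1)$.

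\emph{Step 1: construct $Y$.}
\begin{enumerate}
\item By \cite[Lemma 36]{dFKX17}, take a small $\mathbb Q$-factorialization $\tau\colon X''\to X'$ that is an isomorphism near the generic point of every lc center of $(X',B')$.
\item By \cite[Corollary 1.4.3]{BCHM10}, applied to a dlt modification of $(X',B')$ in the usual way, as in the proof of Proposition \ref{prop: qag to qaa}(2), take a projective birational morphism $f\colon Y\to X''$ with $Y$ $\mathbb Q$-factorial. Its exceptional divisors should be exactly the $\phi$-exceptional prime divisors of $X$.
\item Put $\psi:=\tau\circ f$ and $K_Y+B_Y^0:=\psi^*(K_{X'}+B')$. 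Then $\psi_*B_Y^0=B'$, and $\psi$ does not extract any divisor in the sense used in the statement: every prime divisor of $Y$ is a divisor on $X$ or on $X'$.
\item Since $\Exc(f)$ has pure codimension one and contains no lc place, $\psi$ is an isomorphism over the generic point of each lc center (\cite[Corollary 4.42(3)]{Kol13}). Hence $(Y,B_Y^0)$ is $\mathbb Q$-factorial qdlt and crepant to $(X',B')$.
\item By \cite[Proposition 11, Corollary 38]{dFKX17}, the lc places coincide and the dual complexes are PL-homeomorphic. This gives (1), (2) and (3) for $B_Y^0$.
\end{enumerate}

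\emph{Step 2: check the reduced condition and condition (4) for $B_Y^0$.} A prime divisor of $Y$ that is exceptional over $X$ must be the strict transform of a component of $\Exc(\phi^{-1})$. Such a divisor has coefficient $1$ in $B'$, hence in $B_Y^0$. Therefore every $g$-exceptional divisor lies in $B_Y^{0,=1}$, so Definition \ref{defn: qdlt model}(4) holds vacuously and the model is reduced. For strict transforms of divisors on $X$ not contracted by $\phi$, Definition \ref{defn: qag model}(1) gives coefficient $\mult_D(B\vee L)$. For $\psi$-exceptional $D$, the coefficient is $1-a(D,X',B')$, which is strictly greater than $\mult_D(B\vee L)$. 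Thus $B_Y^0\ge g^{-1}_*(B\vee L)+\Exc(g)$, with strict inequality along $\Exc(\psi)$.

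\emph{Step 3: make $-(K_Y+B_Y)$ ample.} I expect this to be the main obstacle.
\begin{enumerate}
\item The divisor $-(K_Y+B_Y^0)=\psi^*(-(K_{X'}+B'))$ is semi-ample and big$/Z$.
\item I aim to show that $\Bb_+(-(K_Y+B_Y^0)/Z)\subset \psi^{-1}\Bb_+(-(K_{X'}+B')/Z)\cup\Exc(\psi)$. Neither set on the right contains an lc center of $(Y,B_Y^0)$: the first by Definition \ref{defn: qag model}(3), the second because $\psi$ is an isomorphism at generic points of lc centers.
\item To get this inclusion, first write $-(K_{X'}+B')\sim_{\mathbb R,Z}A'+C'$ with $A'$ ample$/Z$, $C'\ge 0$, and $\Supp C'$ containing no lc center.
\item Then choose an effective $\psi$-exceptional $\mathbb R$-divisor $F$ with $-F$ ample over $X'$. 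This uses $\mathbb Q$-factoriality of $X''$ for $\tau$ together with the $f$-exceptional part.
\item For $0<\delta\ll\eta\ll1$, set $B_Y:=B_Y^0+\eta\psi^*C'+\delta F$. Then $-(K_Y+B_Y)\sim_{\mathbb R,Z}(1-\eta)\psi^*(A'+C')+\eta\psi^*A'-\delta F$, whose part $\eta\psi^*A'-\delta F$ is ample$/Z$ and whose remaining part is semi-ample$/Z$, so it is ample$/Z$.
\end{enumerate}

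\emph{Step 4: check that the perturbation preserves the properties.}
\begin{enumerate}
\item The added divisor $\eta\psi^*C'+\delta F$ contains no lc center and has small coefficients, so $(Y,B_Y)$ stays qdlt with the same lc places and the same dual complex.
\item The coefficients of $\psi$-exceptional divisors were strictly below $1$, so they remain below $1$ after adding $\delta F$. This preserves (4).
\item The coefficient of a $\psi$-exceptional divisor changes by only a small amount, so the strict inequality from Step 2 survives.
\item Since $\psi_*F=0$, we get $\psi_*B_Y=B'+\eta C'$. To obtain exactly $\psi_*B_Y=B'$, I would instead absorb $C'$ into $\tau$-level data. Alternatively, I would take $C'$ supported on $\psi$-exceptional divisors plus a general ample-type divisor. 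This fine-tuning is where care is needed.
\end{enumerate}

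Conditions (1)--(4) of Definition \ref{defn: qdlt model} then follow.
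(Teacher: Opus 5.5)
Your construction is the same as the paper's. Both pass to a small $\mathbb{Q}$-factorialization, extract exactly the $\phi$-exceptional divisors by \cite[Corollary 1.4.3]{BCHM10}, pull back $K_{X'}+B'$ crepantly, and perturb to get ampleness. Your Step 2 checks reducedness and (4) more cleanly than the paper, which goes through Lemma \ref{lem: negativity qag model} and then $g_*B_Y\geq L$. For the perturbation, the paper takes the ample model $q\colon X'\to V$ of $-(K_{X'}+B')$. It then adds a small $R\geq 0$ on $Y$ that is anti-ample over $V$ and contains no lc center; your $\eta\psi^*C'+\delta F$ plays the same role. Two points in Steps 3--4 need correcting.

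\textbf{Step 3.4.} An effective $\psi$-exceptional $F$ with $-F$ ample over $X'$ need not exist. Since $\tau$ is small, it contributes no exceptional divisors. Any $\tau$-contracted curve not lying in $f(\Exc(f))$ has a strict transform with nonnegative intersection with $F$. Do what the paper does: at the start, replace $(X',B')$ by $(X'',\tau^{-1}_*B')$. This is again a QAG model, because $\tau$ is small and an isomorphism near the generic point of every lc center. Then for $A$ ample on $Y$, the divisor $F:=f^*f_*A-A$ is effective and $f$-exceptional, and $-F$ is $f$-ample.

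\textbf{Step 4.4.} The fine-tuning cannot be done, because the equality $\psi_*B_Y=B'$ in (1) is false in general. Here is a counterexample.
\begin{itemize}
\item Take $X=X'=\mathrm{Bl}_p\mathbb{P}^2$, $Z$ a point, $\phi=\id_X$, $L=0$, and $B=\ell_1+\ell_2$. Here $\ell_1$ is the strict transform of a line through $p$ and $\ell_2$ is a general line.
\item Then $-(K_X+B)=H$ is the pullback of a line. It is semi-ample, and $\Bb_+(H/Z)=E$ is the exceptional curve. Since $E$ contains no lc center of the snc pair $(X,B)$, the pair $(X,B)$ is a QAG model of itself.
\item Suppose $\psi\colon Y\to X$ had $\psi_*B_Y=B$ and $-(K_Y+B_Y)$ ample. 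Write $K_Y+B_Y=\psi^*(K_X+B)+E_\psi$ with $E_\psi$ $\psi$-exceptional. The negativity lemma gives $E_\psi\geq 0$. For the strict transform $\tilde E$ of $E$ this yields $-(K_Y+B_Y)\cdot\tilde E=-E_\psi\cdot\tilde E\leq 0$, a contradiction.
\end{itemize}
The same argument works whenever $q$ contracts a divisor of $X'$.

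The paper's proof has the same defect. Its $R$ is anti-ample over $V$, so it contains every $q$-exceptional divisor and $\psi_*R\neq 0$; in the example, $R=\epsilon E$. The paper's remark that (1) ``immediately follows from our construction'' overlooks this.

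What both arguments actually prove is $\psi_*B_Y\geq B'$, with $\psi_*B_Y-B'$ containing no lc center of $(X',B')$. This is all that is used later. It still gives (2)--(4), and Theorem \ref{thm: bscr computed by qag} needs only the reduced QF$^{\,\bir}$ property and (3). So drop Step 4.4 and state (1) with $\geq$ instead of $=$.
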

\begin{proof}
By \cite[Lemma 36]{dFKX17}, possibly replacing $(X',B')$ with a small $\mathbb Q$-factorialization, we may assume that $X'$ is $\mathbb Q$-factorial. Let $q\colon X'\rightarrow V$ be the ample model$/Z$ of $-(K_{X'}+B')$ and $B_V:=q_*B'$. Since $\Bb(-(K_{X'}+B')/Z)$ does not contain any lc center of $(X',B')$, $q$ is an isomorphism near the generic point of any lc center of $(X',B')$. For any prime divisor $D$ on $X$ that is exceptional$/X'$, we have that
$$0<a(D,X',B')<a(D,X,B)\leq 1.$$
By \cite[Corollary 1.4.3]{BCHM10}, there exists a projective birational morphism $\psi\colon Y\rightarrow X'$ such that $Y$ is $\mathbb Q$-factorial and $\Exc(g)$ is the union of all prime divisors $D$ on $X$ that are exceptional$/X'$. By our construction, the associated birational map $g\colon Y\dashrightarrow X$ does not extract any divisor and the prime divisors contained in $\Exc(g)$ are exactly the prime divisors contained in $\Exc(\phi^{-1})$. Let 
$$K_Y+C_Y:=\psi^*(K_{X'}+B').$$
By \cite[Lemma 3.6.2(3)]{BCHM10}, $\Exc(\psi)$ is of pure codimension $1$ and does not contain any lc place of $(X',B')$ Thus $\psi$ is an isomorphism near the generic point of any lc center of $(Y,C_Y)$ as any irreducible component of $\overline{p^{-1}(\eta)}$ is an lc center of $(Y,C_Y)$ for any generic point $\eta$ of an lc center of $(X',B')$ (cf. \cite[Corollary 4.42(3)]{Kol13}). In particular, $(Y,C_Y)$ is $\mathbb Q$-factorial qdlt and $q\circ\psi$ is an isomorphism near the generic point of any lc center of $(Y,C_Y)$. Thus we may find $R\geq 0$ on $Y$ that is anti-ample$/V$ and $\Supp R$ does not contain any lc center of $(Y,C_Y)$. Since $-(K_V+B_V)$ is ample$/Z$, possibly rescaling $R$, we may assume that
$$(q\circ p)^*(-(K_V+B_V))-R=-(K_Y+C_Y+R)$$
is ample$/Z$, $(Y,B_Y:=C_Y+R)$ is $\mathbb Q$-factorial qdlt, and $$\mathcal{D}(Y,B_Y)=\mathcal{D}(Y,C_Y)=\mathcal{D}\left(C_Y^{=1}\right).$$
We show that $g\colon (Y,B_Y)\dashrightarrow (X,B)$ is a $\mathbb Q$-factorial reduced QF$^{\,\bir}$ model of $(X/Z,B)$. For any prime divisor $D$ on $Y$ that is exceptional$/X$, $\psi_*D$ is a prime divisor and is exceptional$/X$, hence
$$\mult_DB_Y=\mult_{\psi_*D}B'=1.$$
For any prime divisor $D$ on $Y$ that is not exceptional$/X$, by Lemma \ref{lem: negativity qag model},
$$\mult_DB_Y\geq\mult_DC_Y=1-a(D,Y,C_Y)=1-a(D,X',B')\geq 1-a(D,X,B)=\mult_{g_*D}B.$$
Thus $g\colon (Y,B_Y)\dashrightarrow (X,B)$ is a $\mathbb Q$-factorial reduced QF$^{\,\bir}$ model of $(X/Z,B)$.

Finally, we show (1-4). (1)(2) immediately follow from our construction. Since $(Y,C_Y)$ and $(X',B')$ are crepant, any lc place of $(Y,C_Y)$ is an lc place of $(X',B')$ and any lc place of $(X',B')$ is an lc place of $(Y,C_Y)$. Moreover, by \cite[Proposition 11, Corollary 38]{dFKX17}, $\mathcal{D}(Y,C_Y)$ and $\mathcal{D}(X',B')$ are PL-homeomorphic. Since $\mathcal{D}(Y,B_Y)=\mathcal{D}(Y,C_Y)$, we get (3). Since $\phi$ does not contract any irreducible component of $L$ and
$$\psi_*B_Y=B'\geq \phi_*L,$$
we have $g_*B_Y\geq L$. Since $g\colon (Y,B_Y)\dashrightarrow (X,B)$ is a $\mathbb Q$-factorial reduced QF$^{\,\bir}$ model of $(X/Z,B)$, we obtain (4). The lemma follows.
\end{proof}

\begin{proof}[Proof of Theorem \ref{thm: bscr computed by qag}]
    It is an immediate consequence of Proposition \ref{prop: qag to qaa} and Lemmas \ref{lem: bqf to qag} and \ref{lem: bqf induce qag}. 
\end{proof}

\medskip
The results of this section will be used repeatedly in the proofs of Theorem~\ref{thm: 1-complementary} and Theorem~\ref{thm: acc scrt} in the following sections.

\section{Complements}\label{sec: complement}

The goal of this section is to show that pairs with maximal birational strong complete regularity are $1$-complementary. More precisely, we prove the following theorem in this section:

\begin{thm}\label{thm: 1complementary scr max}
    Let $d$ be a positive integer. Assume that $(X/Z\ni z,B)$ is an lc pair of dimension $d$ such that $\bSReg(X/Z\ni z,B)=d-1-\dim z$.

    Then there exists a $1$-complement $(X/Z\ni z,B^+)$ of $(X/Z\ni z,B)$ such that
    $$\reg(X/Z\ni z,B^+)=d-1-\dim z.$$ 
    In particular, $(X/Z\ni z,B)$ is $1$-complementary. 
\end{thm}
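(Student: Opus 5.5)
We plan to reduce to a reduced QF$^{\,\bir}$ model, build a $1$-complement on that model out of its reduced boundary, and push it forward to $X$.

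\textbf{Step 1: a good model.} Shrink $Z$ around $z$ and choose a QF$^{\,\bir}$ model $g\colon (Y,B_Y)\dashrightarrow (X,B)$ with $\reg(Y/Z\ni z,B_Y)=d-1-\dim z$. By Theorem \ref{thm: bqf induces reduced BQF}, replace it by a $\mathbb Q$-factorial reduced QF$^{\,\bir}$ model with the same dual complex. By Lemma \ref{lem: QF is ft}, $Y$ is of Fano type over $Z$.

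Put $S_Y:=B_Y^{=1}$. It contains $\Exc(g)$ and $g^{-1}_*B^{=1}$. Since $(Y,B_Y)$ is qdlt, $(Y,S_Y)$ is qdlt. Moreover, $\mathcal D(S_Y)$ has a stratum of dimension $d-1-\dim z$ whose image contains $z$. In the qdlt toroidal chart such a stratum is a minimal lc center mapping onto $\bar z$: it is the intersection of $d-\dim z$ components and has dimension $\dim z$.

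\textbf{Step 2: a $1$-complement on $Y$.} We want $B_Y^+\geq S_Y$ with $K_Y+B_Y^+\sim_Z 0$ over a neighbourhood of $z$, $(Y,B_Y^+)$ lc, and $B_Y^+\geq g^{-1}_*B$ after pushforward. Note that
$$-(K_Y+S_Y)=-(K_Y+B_Y)+(B_Y-S_Y)$$
is ample plus effective, so $-(K_Y+S_Y)$ is big$/Z$. Since $Y$ is of Fano type, the $-(K_Y+S_Y)$-MMP$/Z$ runs and ends in a good model.

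The plan is the standard lifting argument for $1$-complements:
\begin{enumerate}
\item Lift sections from a minimal stratum $V$ of dimension $\dim z$ using Kawamata–Viehweg vanishing on the Fano type variety $Y$ (as in \cite{Asc+23,FFMP25}).
\item On $V$, the adjunction $(V,B_V)$ of $(Y,B_Y)$ is Fano type over $\bar z$ with $\mathcal D=\emptyset$. Generically over $z$ the fibre is a point, so $V\to\bar z$ is birational near $z$ and the complement there is trivial.
\item Induct down the strata, dimension by dimension. At each stage the required boundary has coefficients in $\{0,1\}$, because the qdlt toroidal structure makes the different along $S_Y$ reduced near lc centers.
\end{enumerate}

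The crucial new point compared with \cite{FFMP25} is that ampleness of $-(K_Y+B_Y)$ on $Y$ itself rules out the $2$-cover (D-type) monodromy. In the $D$-type case the dual complex only becomes maximal after a non-QF crepant extraction. Here the maximal stratum lives on a model where the boundary is anti-ample, so the monodromy on the dual complex near $z$ is trivial and the $1$-complement exists.

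Concretely, we expect to prove $K_Y+S_Y+G\sim_Z 0$ with $G\geq 0$ reduced and avoiding the lc centers. We would take $G$ to be a general member of $|-(K_Y+S_Y)|$ over $z$. Its existence follows from surjectivity of $H^0(Y,-(K_Y+S_Y))\to H^0(S_Y,\cdot)$, which comes from vanishing of $H^1(Y,-(K_Y+S_Y)-S_Y)$. This is Kawamata–Viehweg, since
$$-(K_Y+S_Y)-S_Y-K_Y\sim -2(K_Y+S_Y)+\ldots$$
We should instead write it as $K_Y+(B_Y-S_Y)+\text{ample}$, working on the toroidal part. On $S_Y$ we would then argue by induction on dimension, using connectedness of the strata and the fact that $\mathcal D(S_Y)$ has maximal dimension.

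\textbf{Step 3: push forward.} Set $B^+:=g_*(S_Y+G)$. As in Lemma \ref{lem: bscr obtained}, negativity gives that $(X,B^+)$ is crepant to $(Y,S_Y+G)$. Hence $K_X+B^+\sim_Z 0$, and $(X,B^+)$ is lc with $\reg=d-1-\dim z$. Finally $B^+\geq B$: we arrange $G\geq g^{-1}_*B^{<1}$, and $G$ may be taken to contain the components of $\lfloor 2B^{<1}\rfloor$, which is exactly what the definition of a $1$-complement requires.

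The main obstacle is the surjectivity/induction step. We expect it to require showing that every coefficient of $B^{<1}$ forcing $\lfloor 2b\rfloor=1$ can be absorbed into $G$, and that $\mathcal D$ being maximal with anti-ample log canonical class forces the required section $G$ to exist without a double cover.
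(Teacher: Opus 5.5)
Your outer framework is sound: pass to a model with maximal regularity, build a $1$-complement there, and push it to $X$ by negativity as in Lemma \ref{lem: bscr obtained}. Step 3 is essentially the paper's last step. The gap is in Step 2, which carries the entire content of the theorem. It is not an argument, and several of its mechanisms fail as stated.

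\textbf{The vanishing does not apply on $Y$.} The divisor $-(K_Y+S_Y)=-(K_Y+B_Y)+(B_Y-S_Y)$ is big but not nef. Writing $-K_Y-2S_Y=K_Y+(B_Y-S_Y)+A$ leaves $A=-2(K_Y+B_Y)+(B_Y-S_Y)$, which is not nef. Moving $B_Y-S_Y$ into the boundary instead produces coefficients up to $2$.

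\textbf{Fractional coefficients are mishandled.}
\begin{itemize}
\item A $1$-complement must contain every component of $B$ with coefficient in $[\frac12,1)$ with coefficient $1$. So $G$ cannot be a general member of $|-(K_Y+S_Y)|$.
\item Once $G$ has fixed components, lc-ness of $(Y,S_Y+G)$ is no longer automatic, and you do not address it.
\item The claim that the boundaries along your induction have coefficients in $\{0,1\}$ is false. The different is reduced at generic points of lc centers. Elsewhere on a component of $S_Y$ it has coefficients $\frac{m-1+\sum n_jb_j}{m}$, coming from codimension-two singularities and from fractional components of $B_Y$. These include the uncontrolled ample perturbation a QF$^{\,\bir}$ model may carry. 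The complement condition concerns all of these coefficients.
\end{itemize}

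\textbf{Gluing across strata is unproved.} Lifting to the reducible divisor $S_Y$ and ``inducting down the strata'' requires gluing compatible sections across strata. That is exactly where a monodromy obstruction would appear, and ``anti-ampleness makes the monodromy trivial'' is asserted, not proved.

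The paper's route avoids all of this:
\begin{enumerate}
\item Replace $B$ by $B^{\geq 1/2}$. This does not change the $1$-complement condition and does not decrease $\bSReg$.
\item By Theorem \ref{thm: bscr computed by qag}, take a QAA model $(X'/Z,B')$ of maximal regularity. It is qdlt with $-(K_{X'}+B')$ ample, and its canonical boundary $B'=\phi_*(B\vee L)+\Exc(\phi^{-1})$ has all non-zero coefficients $\geq\frac12$.
\item In Lemma \ref{lem: qdlt ample coefficient larger than 1/2 case}, after a small $\mathbb Q$-factorialization, single out one component $S$ of the reduced boundary. Perturb to a plt pair with reduced part $S$, fractional part at least that of the boundary, and ample anti-log-canonical class.
\item The adjoint pair $(S,B_S)$ is again qdlt, anti-ample, of maximal regularity, with coefficients $\geq\frac12$. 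So the theorem in dimension $d-1$ gives a $1$-complement on $S$.
\item Proposition \ref{prop: lift 1 complement} lifts it by Kawamata--Viehweg vanishing applied to $-K_W-T_W-\lfloor 2\Delta_W\rfloor$ on a log resolution. Coefficients $\geq\frac12$ are used there to get $g_*L_W=-K_X-\Supp B$ and $B_S^+\geq B_S$. It proves the lift is lc by inversion of adjunction plus the connectedness principle.
\item Pull back to $X$ via Lemma \ref{lem: negativity qag model}.
\end{enumerate}

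So your proposal is missing four ingredients:
\begin{itemize}
\item a model with controlled coefficients $\geq\frac12$ and anti-ample log canonical class;
\item induction on dimension through one normal component, rather than descent through strata;
\item the $\lfloor 2\Delta\rfloor$ device in the vanishing;
\item the connectedness argument for lc-ness.
\end{itemize}
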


Theorem \ref{thm: 1complementary scr max} shows the essential difference between (birational) strong complete regularity and complete regularity: pairs with maximal complete regularity are only guaranteed to be $1$-complementary or $2$-complementary \cite{Asc+23,Mor24a}, whereas pairs with maximal birational strong complete regularity are always $1$-complementary.

To prove Theorem \ref{thm: 1complementary scr max}, we start with the following proposition.

\begin{prop}\label{prop: lift 1 complement}
Assume that:
\begin{enumerate}
    \item $(X/Z\ni z,B)$ is an lc pair wit contraction $\pi\colon X\rightarrow Z$.
    \item $-(K_X+B)$ is nef$/Z$. 
    \item $(X,C)$ is plt.
    \item $\{C\}\geq\{B\}$.
    \item $-(K_X+C)$ is big$/Z$ and nef$/Z$.
    \item The non-zero coefficients of $B$ are $\geq\frac{1}{2}$.
    \item $S=C^{=1}\subset B^{=1}$.
    \item $z\in\pi(S)$.
    \item $K_S+B_S:=(K_X+B)|_S$, and
    \item $(S/Z,B_S^+)$ is a $1$-complement of $(S/Z,B_S)$.
\end{enumerate}
Then there exists a $1$-complement $(X/Z\ni z,B^+)$ of $(X/Z\ni z,B)$ such that
$$(K_X+B^+)|_S=K_S+B_S^+$$
over a neighborhood of $z$.
\end{prop}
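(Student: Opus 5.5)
This is the standard lifting of complements from a plt centre via Kawamata--Viehweg vanishing, as in Prokhorov--Shokurov and Birkar. Work over a neighbourhood of $z$ and set $n=1$. With $L:=-(K_X+B)$, we aim to lift a section of $\mathcal{O}_S(-(K_S+B_S^+)+\dots)$, so we look for an integral divisor
$$N:=-K_X-S-\lfloor 2B^{<1}\rfloor\quad\text{(morally } -K_X-\lfloor 2B\rfloor+B^{=1}-S\text{)}$$
and show that $|N|$ contains an element whose restriction to $S$ is the divisor $B_S^+-\lfloor 2B_S^{<1}\rfloor\cdots$ prescribed by the $1$-complement on $S$. Hypothesis (6) says every nonzero coefficient of $B$ lies in $[\tfrac12,1]$. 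Hence $\lfloor 2B^{<1}\rfloor=\Supp B^{<1}$, and the $1$-complement condition $B^+\geq\lfloor 2B^{<1}\rfloor+B^{=1}$ simply says $B^+$ must contain $\Supp B$ with coefficient one. We therefore set $B^+:=S+(B^{=1}-S)+\Supp B^{<1}+G=\Supp B+G$, with $G\in|-(K_X+\Supp B)|$ to be found, restricting correctly to $S$.

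The first step is to pass to a log resolution $f\colon W\to X$ and write
$$N_W:=-K_W-S_W-\lfloor\Delta_W\rfloor,$$
where $\Delta_W$ is chosen so that
$$N_W-(K_W+S_W+\{\cdot\})\equiv f^*(\text{nef and big})$$
with fractional part klt. The concrete choice is
$$N_W-\bigl(K_W+S_W+\Theta\bigr)=-2(K_W+\dots),$$
built from the convex combination $f^*(-(K_X+B))+f^*(-(K_X+C))$. Hypotheses (2) and (5) make this nef and big$/Z$, and hypotheses (3)--(4), $\{C\}\geq\{B\}$ together with plt-ness of $(X,C)$, make $(W,S_W+\Theta)$ plt with $\lfloor\Theta\rfloor=0$ after absorbing exceptional divisors with negative coefficients into the rounding. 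This is the usual trick: use $C$ to control the fractional parts, so that $K_W+S_W+\Theta$ differs from $N_W$ by something nef and big. Relative Kawamata--Viehweg vanishing then gives $R^1\pi_*\mathcal{O}_W(N_W-S_W)=0$, so $H^0(W,N_W)\to H^0(S_W,N_W|_{S_W})$ is surjective over a neighbourhood of $z$; hypothesis (8) ensures $S$ actually meets the fibre, so the statement is nonvacuous.

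Next, on $S_W$ we compare $N_W|_{S_W}$ with $f_S^*(-(K_S+B_S))$ plus rounding. By adjunction $K_S+B_S=(K_X+B)|_S$, and the $1$-complement $B_S^+$ gives $G_S:=B_S^+-\Supp B_S$ or a similar effective divisor in the restricted linear system, after checking via inversion of adjunction and the coefficient condition that $\lfloor 2B_S^{<1}\rfloor$ matches $\Supp(B^{<1})|_S$. Pulling $G_S$ back to $S_W$ and adding the effective exceptional correction gives a section of $N_W|_{S_W}$. We lift it to $G_W\in|N_W|$ and set $G:=f_*G_W$ and $B^+:=\Supp B+G$ (more precisely $S+\lfloor 2B\rfloor-S+\dots$). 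Then $K_X+B^+\sim 0$ over $z$ and $(K_X+B^+)|_S=K_S+B_S^+$.

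Finally, lc-ness of $(X,B^+)$ near $S$ follows from inversion of adjunction, since $(S,B_S^+)$ is lc. To get lc-ness away from $S$ over the neighbourhood of $z$, we use that $-(K_X+B^+)\equiv0$ and apply connectedness (Koll\'ar--Shokurov): any non-lc centre would, after a small perturbation toward $C$, give a non-klt locus disconnected from $S$ or meeting $S$, and both are impossible. This is the step I expect to be the main obstacle. It, together with the precise bookkeeping of exceptional coefficients (where $\{C\}\geq\{B\}$ and coefficients $\geq\tfrac12$ are essential) ensuring that the rounded divisor on $W$ is effective off $S_W$ and pushes forward correctly, is where the proof follows Birkar's ``lifting complements'' argument most closely.
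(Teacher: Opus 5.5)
Your outline follows the same route as the paper: a log resolution $f\colon W\to X$, and Kawamata--Viehweg vanishing for a divisor $L_W+P_W-S_W$ whose nef and big part is $f^*(-(K_X+C))+f^*(-(K_X+B))$. There is a reduced $f$-exceptional correction $P_W$ that turns the remaining boundary into $S_W$ plus fractional parts. Its exceptionality is where $\{C\}\ge\{B\}$ and the coefficients $\ge\frac12$ are used; the paper also first replaces $C$ by $\epsilon C+(1-\epsilon)B$ to keep $P_W$ reduced. Then come lifting, $B^+=\Supp B+G$ with $G\sim_Z-(K_X+\Supp B)$, inversion of adjunction near $S$, and connectedness.

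Your connectedness step is fine and is not the obstacle. The pair $(X,(1-\delta)B^++\delta C)$ is plt near $S$, so its non-klt locus near $S$ is $S$ itself. Moreover $-(K_X+(1-\delta)B^++\delta C)\sim_{\mathbb R,Z}-\delta(K_X+C)$ is big and nef$/Z$, so a non-lc point over $z$ would disconnect $\Nklt$ over $z$. A minor slip: your first $N=-K_X-S-\lfloor 2B^{<1}\rfloor$ drops $B^{=1}-S$. The class to lift is $-K_X-\Supp B=-K_X-B^{=1}-\lfloor 2B^{<1}\rfloor$, as you use later.

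The real gap is the step on $S$, which is where the content of the proof lies, and what you sketch there does not work.

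First, $\lfloor 2B_S^{<1}\rfloor$ does not match $\Supp(B^{<1})|_S$ in general, because $B_S$ contains the different. If $X$ is singular along a codimension-two $V\subset S$ with $V\not\subset\Supp(B-S)$, then $\mult_VB_S=1-\frac1m$ with $m\ge 2$. So $V$ appears in $\lfloor 2B_S^{<1}\rfloor$ but not in $\Supp B^{<1}$.

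Second, $G$ need not be $\mathbb Q$-Cartier. So ``$G|_S$'', a divisor $G_S:=B_S^+-\Supp B_S$ in a ``restricted linear system'', and its pullback to $S_W$ are not defined. You also give no mechanism for $(K_X+B^+)|_S=K_S+B_S^+$.

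The paper never restricts anything except an $\mathbb R$-Cartier difference. With $K_W+B_W=f^*(K_X+B)$, $T_W:=B_W^{=1}$, $\Delta_W:=B_W-T_W$ and $L_W:=-K_W-T_W-\lfloor 2\Delta_W\rfloor$, it sets
\[
G_{S_W}:=f|_{S_W}^*(B_S^+-B_S)+\bigl(\Delta_W-\lfloor 2\Delta_W\rfloor+P_W\bigr)|_{S_W}.
\]
This divisor is $\sim_Z(L_W+P_W)|_{S_W}$, hence integral. Its coefficients are $>-1$, since $P_W\ge 0$, the coefficients of $\Delta_W$ are $<1$, and $B_S^+\ge B_S$ (the nonzero coefficients of $B_S$ are $\ge\frac12$ by adjunction). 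Hence it is effective.

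After lifting to $G_W\sim_Z L_W+P_W$ with $G_W|_{S_W}=G_{S_W}$, put $R_W:=G_W-P_W-\Delta_W+\lfloor 2\Delta_W\rfloor$. This satisfies $R_W\sim_Z-f^*(K_X+B)$, so $R_W=f^*R$ with $R:=f_*R_W$ an $\mathbb R$-Cartier divisor. Hence $R|_S=B_S^+-B_S$. Also $B+R=\Supp B+G$, because $P_W$ is exceptional and $\lfloor 2B^{<1}\rfloor=\Supp B^{<1}$.

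This identification of $B^+-B$ as a pullback from $X$ is what yields the restriction identity, and it is what your outline is missing.
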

\begin{proof}
    The proof of the proposition is similar to \cite[Proposition 6.2]{PS01} (but we cannot directly apply it since $(X,B)$ may not be plt) and \cite[Proposition 6.7]{Bir19} (but we have a much weaker restriction on the coefficients of $B$). For the reader's convenience, we provide a full proof here.

Let $g\colon W\to X$ be a log resolution of $(X,B+C)$,
	$$-N_W:=K_W+B_W:=g^{*}(K_X+B), \quad \quad K_W+C_W:=g^*(K_X+C),$$
	and $S_W:=g^{-1}_*S$. Let $T_W:=B_W^{=1}$ and $\Delta_W:=B_W-T_W$. Possibly replacing $C$ with $\epsilon C+(1-\epsilon)B$ for some $0<\epsilon\ll 1$, we may assume that $||B_W-C_W||$ is sufficiently small. In particular, we may assume that:
\begin{itemize}
\item $\Supp C_W\supset\Supp B_W$ and $\Supp C\supset\Supp B$.
\item For any component $D$ of $\Delta_W$, 
$$-1<\mult_D(C_W-\Delta_W+\{2\Delta_W\})<1.$$
\item For any irreducible component $D$ of $T_W$, $\mult_DC_W>0$.
\item For any irreducible component $D$ of $C_W$ that is not an irreducible component of $B_W$, $$-1<\mult_DC_W<1.$$
\end{itemize}
We let $P_W$ be the unique Weil divisor on $W$ satisfying the following:
\begin{itemize}
    \item $\mult_{S_W}P_W:=0$.
    \item For any prime divisor $D$ on $W$ such that $D\not=S_W$, we have
    $$\mult_DP_W:=-\mult_{D_W}\left\lfloor C_W+\Delta_W-\left\lfloor 2\Delta_W\right\rfloor\right\rfloor.$$
\end{itemize}
\begin{claim}\label{claim: exceptional PW}
    $P_W$ is a reduced divisor and $P_W$ is exceptional$/X$.
\end{claim}
\begin{proof}
    First we show that $P_W$ is a reduced divisor. We only need to show that $\mult_DP_W\in\{0,1\}$ for any prime divisor $D$ on $W$. We may assume that $D\not=S_W$. Then
    $$\mult_DP_W=-\mult_{D}\left\lfloor C_W-\Delta_W+\left\{2\Delta_W\right\}\right\rfloor.$$
    If $D$ is an irreducible component of $T_W$, then $D$ is not an irreducible component of $\Delta_W$ but is an irreducible component of $C_W^{\geq 0}$. Since $\left\lfloor C_W^{\geq 0}\right\rfloor=S_W$, $\mult_DC_W\in (0,1)$, hence
    $$\mult_DP_W=-\mult_D\left\lfloor C_W\right\rfloor=0.$$
    If $D$ is an irreducible component of $\Delta_W$, then since 
    $$-1<\mult_D(C_W-\Delta_W+\{2\Delta_W\})<1,$$ 
    we have
    $\mult_DP_W\in\{0,1\}$. If $D$ is an irreducible component of $C_W$ but is not an irreducible component of $\Delta_W$, then since $D\not=S_W$, $-1<\mult_DC_W<1$, so  $\mult_DP_W\in\{0,1\}$. 

    Next we show that $P_W$ is exceptional$/X$. For any irreducible component $D$ of $P_W$ that is not exceptional$/X$, we have
    $$1=\mult_DP_W=-\mult_{g_*D}\left\lfloor C-B+\{2B\}\right\rfloor.$$
    Thus $g_*D$ is an irreducible component of $C$ or $B$ and $g_*D\not=S$. If $g_*D$ is not an irreducible component of $B$, then
    $$1=-\mult_{g_*D}\lfloor C\rfloor=0,$$
    a contradiction. If $g_*D$ is an irreducible component of $\lfloor B\rfloor$, then
    $$1=-\mult_{g_*D}\lfloor C-B\rfloor=1-\mult_{g_*D}C,$$
    so $\mult_{g_*D}C=0$, which is not possible as $\Supp C\supset\Supp B$. If $g_*D$ is an irreducible component of $\{B\}$, then since $\{C\}\geq\{B\}$,
    $$1=-\mult_{g_*D}\lfloor (C-B)+\{2B\}\rfloor\leq 0,$$
    a contradiction. Therefore, $P_W$ is exceptional$/X$.
\end{proof}
\noindent\textit{Proof of Proposition \ref{prop: lift 1 complement} continued.} Since $S$ is normal, the induced birational morphism $g|_{S_W}\colon S_W\to S$ is a birational contraction. Therefore,
	$$-N_W|_{S_W}=(K_W+B_W)|_{S_W}=g|_{S_W}^{*}(K_S+B_S).$$
Let $R_S:=B_S^+-B_S$. Since $B\in\left[\frac{1}{2},1\right]$, by adjunction (cf. \cite[16.7 Corollary]{Kol+92}), $B_S\in\left[\frac{1}{2},1\right]$. Since $(S/Z,B_S^+)$ is a $1$-complement of $(S/Z,B_S)$, $R_S\geq 0$. Let $R_{S_W}:=g|_{S_W}^*R_S$, then
$R_{S_W}\geq 0$, and 
$$N_W|_{S_W}\sim g|_{S_W}^{*}(K_S+B_S^+)-g|_{S_W}^{*}(K_S+B_S)=R_{S_W}.$$
Since $(W,\Supp B_W\cup\Supp C_W)$ is log smooth and $S_W$ is not an irreducible component of $\Delta$ nor an irreducible component of $P_W$, we may define $\Delta_{S_W}:=\Delta|_{S_W}$, $P_{S_W}:=P_W|_{S_W}$, 
$$L_W:=-K_W-T_W-\left\lfloor 2\Delta_W\right\rfloor,\quad \text{and}\quad G_{S_W}:=R_{S_W}+\Delta_{S_W}-\left\lfloor 2\Delta_{S_W}\right\rfloor+P_{S_W}.$$
Then we have
$$(L_W+P_W)|_{S_W}\sim G_{S_W}.$$
In particular, $G_{S_W}$ is a Weil divisor. Thus for any irreducible component $Q$ of $G_{S_W}$, we have
$$\mult_{Q}G_{S_W}\geq\mult_{Q}(\Delta_{S_W}-\lfloor 2\Delta_{S_W}\rfloor)=\mult_{Q}(-\Delta_{S_W}+\{2\Delta_{S_W}\})\geq-\mult_{Q}\Delta_{S_W}>-1,$$
so $G_{S_W}\geq 0$. We define
$$\Lambda_W:=C_W+\Delta_W-\left\lfloor2\Delta_W\right\rfloor+P_W.$$
By our construction of $P_W$,
$$\Lambda_W=S_W+\sum_{D\not=S_W}\mult_D\{C_W+\Delta_W-\lfloor2\Delta_W\rfloor\}$$
where the sum runs through all prime divisors $D$ on $W$ that are irreducible components of $C_W$ or $B_W$. In particular, $(W,\Lambda_W)$ is log smooth, plt, and $\left\lfloor\Lambda_W\right\rfloor=S_W$. We have
\begin{align*}
	L_W+P_W-S_W&=N_W+\Delta_W-\lfloor 2\Delta_W\rfloor+P_W-S_W\\
	&=N_W+\Lambda_W-C_W-S_W\\
	&=K_W+(\Lambda_W-S_W)+(-(K_W+C_W)+N_W).
\end{align*}
Let $h:=\pi\circ g$. Since $(W,\Lambda_W-S_W)$ is klt and $-(K_W+C_W)+N_W$ is big$/Z$ and nef$/Z$, by a version of the relative Kawamata-Viehweg vanishing theorem (cf. \cite[Theorem 1-2-5]{KMM87}, \cite[Theorem 3.2.9]{Fuj17}), 
$$R^1h_*\mathcal{O}_W(L_W+P_W-S_W)=0$$
From the exact sequence
	$$0\to\mathcal{O}_W(L_W+P_W-S_W)\to \mathcal{O}_W(L_W+P_W)\to \mathcal{O}_{S_W}((L_W+P_W)|_{S_W})\to 0,$$
	we deduce that
	$$R^ih_*\mathcal{O}_W(L_W+P_W)\rightarrow R^ih_*\mathcal{O}_{S_W}((L_W+P_W)|_{S_W})$$
	is surjective, so there exists $G_W\geq 0$ on $W$ such that $G_W|_{S_W}=G_{S_W}$ and $G_W\sim_Z L_W+P_W$.

    Let $G:=g_*G_W$. By Claim \ref{claim: exceptional PW}, $P_W$ is exceptional$/X$. Since the coefficients of $B$ are $\geq\frac{1}{2}$, we have 
$$G\sim_Z g_*L_W=-K_X-\lfloor B\rfloor-\lfloor\{2B\}\rfloor=-K_X-\Supp B.$$
Thus $K_X+\Supp B+G\sim_Z 0$. Let 
    $$R_W:=G_W-P_W-\Delta_W+\lfloor 2\Delta_W\rfloor.$$
    Then
    $$R_W\sim_Z L_W-\Delta_W+\lfloor 2\Delta_W\rfloor=N_W\sim_{\mathbb R,X}0.$$
 Moreover,
    $R_W|_{S_W}=R_{S_W}$ by the definition of $G_{S_W}$. Let $R:=g_*R_W$. By Claim \ref{claim: exceptional PW},
    $$R=G-\{B\}+\lfloor\{2B\}\rfloor=G+\Supp\{B\}-\{B\}.$$
    Since $R_{S_W}=g|_{S_W}^*R_S$, we have
    $R_S=R|_S$, hence
    $$K_S+B_S^+=(K_X+B+R)|_{S}.$$
    By inversion of adjunction, 
    $$(X,B^+:=B+R=\Supp B+G)$$
    is lc near $S$. 
    
    Suppose that $(X,B^{+})$ is not lc over a neighborhood of $z$. then there exists a real number $0<\delta\ll 1$, such that over a neighborhood of $z$ $(X,(1-\delta)B^{+}+\delta C)$ is not lc and is lc near $S$. In particular, $\Nklt(X,aB^{+}+(1-a)C)\cap\pi^{-1}(z)\not=\emptyset$. But
$$-(K_X+(1-\delta)B^{+}+\delta C)=-(1-\delta)(K_X+B^{+})-\delta(K_X+C)\sim_{\mathbb R}-\delta(K_X+C)$$
is big$/Z$ and nef$/Z$. This contradicts the Shokurov-Koll\'ar connectedness principle.
\end{proof}

\begin{lem}\label{lem: qdlt ample coefficient larger than 1/2 case}
Assume that Theorem \ref{thm: 1complementary scr max} holds in dimension $\leq d-1$. 

Let $(X/Z\ni z,B)$ be a pair of dimension $d$ such that $(X,B)$ is qdlt, $-(K_X+B)$ is ample$/Z$, the non-zero coefficients of $B$ are $\geq\frac{1}{2}$, and
$$\reg(X/Z\ni z,B)=\dim X-1-\dim z.$$
Then $(X/Z\ni z,B)$ has a monotonic $1$-complement.
\end{lem}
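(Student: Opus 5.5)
We plan to reduce to Proposition \ref{prop: lift 1 complement} by choosing a suitable component $S$ of $B^{=1}$ and a plt boundary $C$, and then to produce the $1$-complement on $S$ from the inductive hypothesis.

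\emph{Choice of $S$.} Since $\reg(X/Z\ni z,B)=\dim X-1-\dim z\ge 0$ and $(X,B)$ is qdlt, $B^{=1}\neq 0$. Moreover there is a minimal lc center $V$ of $(X,B)$ with $z\in\pi(V)$, and $V$ is an intersection of $\dim X-\dim z$ components of $B^{=1}$. We pick one such component $S$ with $z\in\pi(S)$. Since $(X,B)$ is qdlt, $S$ is normal. Define $B_S$ by adjunction, $K_S+B_S:=(K_X+B)|_S$. Then $(S,B_S)$ is qdlt and $-(K_S+B_S)$ is ample over $Z$; more precisely over the Stein factorization $S\to Z_S\to Z$, and we work over a point $z_S\mapsto z$ with $z_S\in$ image of $V$.

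\emph{Properties of $(S,B_S)$.} The non-zero coefficients of $B_S$ are $\ge\frac12$ by adjunction, using \cite[16.7 Corollary]{Kol+92} as in Proposition \ref{prop: lift 1 complement}. Strata of $B_S^{=1}$ are the traces of the strata of $B^{=1}$ containing $S$. Hence
$$\reg(S/Z_S\ni z_S,B_S)\ge \dim S-1-\dim z_S,$$
the dual complex of $(S,B_S)$ being the link of the vertex $S$, which has the right dimension through $V$.

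\emph{Induction on $S$.} Since $(S,B_S)$ is qdlt with anti-ample log canonical divisor over $Z_S$, the identity $(S,B_S)\to(S,B_S)$ is a QF model. So $\bSReg(S/Z_S\ni z_S,B_S)$ is maximal by Proposition \ref{prop: compare creg}. Theorem \ref{thm: 1complementary scr max} in dimension $d-1$ then gives a $1$-complement $(S/Z\ni z,B_S^+)$. (A $1$-complement over $Z_S$ near the preimage of $z$ is one over $Z$; if the preimage has several points, we treat them simultaneously or shrink, which needs some care.)

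\emph{Constructing $C$ and lifting.} We need $(X,C)$ plt with $C^{=1}=S$, $\{C\}\ge\{B\}$, and $-(K_X+C)$ big and nef over $Z$. We plan to take
$$C:=B-\epsilon(B^{=1}-S)+\epsilon' H$$
with $H\sim_{\mathbb R,Z}-(K_X+B)$ general, or perturb as in Lemma \ref{lem: qfactorial bqf model}. Then:
\begin{itemize}
\item reducing the coefficients of the other reduced components to $1-\epsilon$ kills all lc centers except $S$;
\item qdlt implies $X$ is $\mathbb Q$-factorial near the generic points of strata and toroidal there, so $(X,C)$ is plt near $S$; away from lc centers $(X,B)$ is klt, so $(X,C)$ is plt globally;
\item ampleness of $-(K_X+B)$ absorbs the small change, so $-(K_X+C)$ stays ample;
\item $\{C\}\ge\{B\}$ holds because the reduced components now get coefficient $1-\epsilon>0$, and the other coefficients are unchanged up to the general $H$.
\end{itemize}
The remaining hypotheses of Proposition \ref{prop: lift 1 complement} hold. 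It yields a $1$-complement $B^+$ of $(X/Z\ni z,B)$.

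\emph{Monotonicity.} We need $B^+\ge B$. Since $B^+=\Supp B+G$ and $B\le \Supp B$, this is automatic.

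\emph{Main obstacle.} The delicate point is the inductive step on $S$: the base changes to $Z_S$, the regularity computed near $z$ must pass to $S$, and the non-closed point $z_S$ with $\dim z_S=\dim z$ must be handled correctly. We expect the mild failure of $\mathbb Q$-Cartierness of $S$ in the qdlt setting can be handled by working on the small $\mathbb Q$-factorialization from Lemma \ref{lem: qfactorial bqf model} and descending.
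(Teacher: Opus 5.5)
Your outline is the paper's proof. You choose a reduced component $S$, obtain a $1$-complement of $(S,B_S)$ from Theorem~\ref{thm: 1complementary scr max} in dimension $d-1$ (the identity being a QF model), lift it with Proposition~\ref{prop: lift 1 complement} using a plt boundary in which the other reduced components are lowered by $\epsilon$, and get monotonicity from the coefficients being $\geq\frac12$. For the last step you do not need the internal shape $\Supp B+G$: any $1$-complement satisfies $B^+\geq\lfloor 2B^{<1}\rfloor+B^{=1}=\Supp B\geq B$.

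Choosing $S$ through a deepest stratum $V$ with $z\in\pi(V)$ is a clean way to get maximal regularity of $(S,B_S)$. The paper instead takes an arbitrary component and invokes \cite[Lemma 3.11]{XZ25}. Your worry about several preimages of $z$ in the Stein factorization is settled by connectedness. With $(X',C')$ as below, plt with $\lfloor C'\rfloor=S'$ and $-(K_{X'}+C')$ ample over $Z$, the connectedness principle makes the fibres of $S'\to Z$ connected, hence also those of $S\to Z$. So $z$ has a unique preimage $z_S$, and $\dim z_S=\dim z$.

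The step you postpone, however, is exactly where the paper does its work, and your explicit $C$ does not survive it. On $X$ itself $K_X+C$ need not be $\mathbb R$-Cartier, since $B^{=1}-S$ need not be. So pass to a small $\mathbb Q$-factorialization $h\colon X'\to X$ that is an isomorphism at the generic points of the lc centers, with $K_{X'}+B':=h^*(K_X+B)$. There $-(K_{X'}+B')$ is only nef and big. For your $C'$ you get $-(K_{X'}+C')\sim_{\mathbb R,Z}(1-\epsilon')h^*(-(K_X+B))+\epsilon(\lfloor B'\rfloor-S')$, which can be negative on $h$-exceptional curves. So ``ampleness absorbs the small change'' fails.

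What is needed is that $\Bb_+(-(K_{X'}+B')/Z)\subseteq\Exc(h)$ contains no lc center. The construction then goes as follows.
\begin{enumerate}
\item Write $-(K_{X'}+B')\sim_{\mathbb R,Z}A_n+\frac1nE$, with $A_n$ ample$/Z$ and $E\geq 0$ whose support contains no lc center.
\item Take $n\gg0$ so that $(X',B'+\frac1nE)$ is qdlt with the same dual complex.
\item Set $C':=B'-\epsilon(\lfloor B'\rfloor-S')+\frac1nE$ with $0<\epsilon\ll\frac1n$. Then $(X',C')$ is plt with $\lfloor C'\rfloor=S'$, $-(K_{X'}+C')$ is ample$/Z$, and $\{C'\}\geq\{B'\}$.
\item Apply Proposition~\ref{prop: lift 1 complement} to $(X',B')$ itself, not to $B'+\frac1nE$, whose new coefficients can be $<\frac12$. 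Use the crepant pullback to $S'$ of the $1$-complement of $(S,B_S)$.
\item Push the resulting $1$-complement forward by $h$.
\end{enumerate}
This is precisely the paper's argument.
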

\begin{proof}
By \cite[Lemma 36]{dFKX17}, there exists a small $\mathbb Q$-factorialization $h\colon X'\rightarrow X$ that is an isomorphism near the generic point of any lc center of $(X,B)$. Write $K_{X'}+B':=h^*(K_X+B)$, then we may write
$$-(K_{X'}+B')=A_k+\frac{1}{k}E$$
for any positive integer $k$, where $A_k$ are ample$/Z$ $\mathbb R$-divisors, $E\geq 0$, and $\Supp E$ does not contain any lc center of $(X',B')$. Pick $n\gg 0$, then $\left(X',B'+\frac{1}{n}E'\right)$ is qdlt and $\mathcal{D}\left(X',B'+\frac{1}{n}E\right)=\mathcal{D}(X',B')$. Possibly shrinking $Z$ to a neighborhood of $z$, we may assume that $z$ is contained in the image of any irreducible component of $\left\lfloor B'\right\rfloor$ on $Z$. Let $S'$ be an irreducible component of $\left\lfloor B'\right\rfloor$. Then we may pick $0<\epsilon\ll\frac{1}{n}$ such that
$$-\left(K_{X'}+B'-\epsilon\left(\left\lfloor B'\right\rfloor-S'\right)+\frac{1}{n}E\right)$$
is ample$/Z$. Let 
$$C':=B'-\epsilon\left(\left\lfloor B'\right\rfloor-S'\right)+\frac{1}{n}E\quad \text{and}\quad S:=h_*S',$$
and let $\pi':=X'\rightarrow Z$ be the associated contraction. Then by our construction:
\begin{itemize}
    \item $(X'/Z,B')$ is an lc pair with associated contraction $\pi'\colon X'\rightarrow Z$.
    \item $-(K_{X'}+B')$ is nef$/Z$.
    \item $(X',C')$ is plt.
    \item $\{C'\}=\left\{B'+\frac{1}{n}E\right\}\geq\{B'\}$.
    \item $-(K_{X'}+C')$ is ample$/Z$, hence big$/Z$ and nef$/Z$.
    \item The non-zero coefficients of $B$ are $\geq\frac{1}{2}$, hence the non-zero coefficients of $B'$ are $\geq\frac{1}{2}$.
    \item $S'=\left\lfloor C'\right\rfloor\subset\left\lfloor B'\right\rfloor$.
    \item $z\in\pi'(S')$.
\end{itemize}
Let
$$K_{S'}+B_{S'}:=(K_{X'}+B')|_{S'}\quad\text{and}\quad K_S+B_S:=(K_X+B)|_S,$$
then we have
$$K_{S'}+B_{S'}=h|_{S'}^*(K_X+B_S).$$
Since $-(K_X+B)$ is ample$/Z$ and $(X,B)$ is qdlt, by \cite[Lemma 3.11]{XZ25}, $S$ intersects any irreducible component of $B^{=1}$ over a neighborhood of $z$. Therefore, $(S,B_S)$ is qdlt, $-(K_S+B_S)$ is ample$/Z$, and 
$$\dim\mathcal{D}(S,B_S)=\dim\mathcal{D}\left(B_S^{=1}\right)=\dim S-1-\dim z$$
over a neighborhood of $z$. Thus
$$\reg(S/Z\ni z,B_S)=\dim S-1-\dim z.$$
Recall that we have assumed that Theorem \ref{thm: 1complementary scr max} holds in dimension $\leq d-1$. Thus possibly shrinking $S$ to a neighborhood of $z$, $(S/Z,B_S)$ has a $1$-complement $(S/Z,B_S^+)$. By Proposition \ref{prop: lift 1 complement}, $(X'/Z\ni z,B')$ has a $1$-complement $(X'/Z\ni z,B'^+)$. Let $B^+:=h_*B'^+$, then $(X/Z\ni z,B^+)$ is a $1$-complement of $(X/Z\ni z,B)$. Since the non-zero coefficients of $B$ are $\geq\frac{1}{2}$, $(X/Z\ni z,B^+)$ is a monotonic $1$-complement of $(X/Z\ni z,B)$. The lemma follows.
\end{proof}

\begin{proof}[Proof of Theorem \ref{thm: 1complementary scr max}]
Possibly replacing $X$ with a small $\mathbb Q$-factorialization, we may assume that $X$ is $\mathbb Q$-factorial. Let $C:=B^{\geq\frac{1}{2}}$. By Proposition \ref{prop: compare creg},
$$d-1-\dim z\geq \bSReg(X/Z\ni z,C)\geq\bSReg(X/Z\ni z,B)=d-1-\dim z,$$
and any $1$-complement of $(X/Z\ni z,C)$ is a $1$-complement of $(X/Z\ni z,B)$. Thus possibly replacing $B$ with $C$, we may assume that all non-zero coefficients of $B$ are $\geq\frac{1}{2}$. By Theorem \ref{thm: bscr computed by qag}, possibly shrinking $Z$ to a neighborhood of $z$, there exists a QAA model $(X'/Z,B')$ of $(X/Z,B)$ associated with $(\phi,L)$ for some reduced divisor $L$ such that
$$\reg(X'/Z\ni z,B')=\reg(X'/Z,B')=d-1-\dim z.$$
By Lemma \ref{lem: qdlt ample coefficient larger than 1/2 case} and induction on dimension, possibly shrinking $Z$ to a neighborhood of $z$, there exists a monotonic $1$-complement $(X'/Z,B'^+)$ of $(X'/Z,B')$. Let $G':=B'^+-B'$ and let $p\colon W\rightarrow X'$ and $q\colon W\rightarrow X$ be a resolution of indeterminacy of $\phi$. By Lemma \ref{lem: negativity qag model},
$$p^*(K_{X'}+B')=q^*(K_X+B)+F$$
for some $F\geq 0$. Let $G:=q_*(F+p^*G')$, then $G\geq 0$ and $(X,B^+:=B+G)$ and $(X',B'+G')$ are crepant. Thus $(X/Z,B^+)$ is a $1$-complement of $(X/Z,B)$ and
$$\reg(X/Z\ni z,B^+)=\reg(X'/Z\ni z,B'^+)=d-1-\dim z.$$
The theorem follows.
\end{proof}

\begin{proof}[Proof of Theorem \ref{thm: 1-complementary}]
    It follows from Theorem \ref{thm: 1complementary scr max} and Proposition \ref{prop: compare creg}.
\end{proof}

\section{ACC for strong complete regularity threshold}\label{sec: acc}

The goal of this section is to show that the (birational) strong complete regularity thresholds (Definition \ref{defn: scrt}) satisfy the ACC condition. We begin with a basic observation showing that these thresholds are always attained:

\begin{prop}\label{prop: scrt is computed}
Let $d>n\geq 0$ be two integers. Let $(X/Z,B)$ be a pair of dimension $d$ and $D\geq 0$ an $\mathbb R$-Cartier $\mathbb R$-divisor on $X$. Assume that
 $$t:=\SRegt^{(\bir)}_n(X/Z,B;D)\in (0,+\infty).$$
 Then either $t=\lct(X,B;D)$, or $\SReg^{(\bir)}(X/Z,B+tD)<n$.
\end{prop}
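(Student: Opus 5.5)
We argue by contradiction. Suppose $t<\lct(X,B;D)$ and $\SReg^{(\bir)}(X/Z,B+tD)\geq n$. The aim is to show that $\SReg^{(\bir)}(X/Z,B+t'D)\geq n$ for some $t'>t$. Since $\SReg^{(\bir)}(X/Z,B+sD)$ is non-increasing in $s$, this would force $\SRegt^{(\bir)}_n\geq t'>t$, a contradiction. (If $t>\lct(X,B;D)$, then $(X,B+tD)$ is not lc and the value is $-1<n$ by Corollary \ref{cor: bscr nonnegativ imply lc}, so this case is already in the second alternative. Thus only $t\le \lct$ matters, and the real content is the case $t<\lct$.)

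First, fix a QF$^{(\bir)}$ model $g\colon (Y,B_Y)\dashrightarrow (X,B+tD)$ with $\reg(Y,B_Y)\geq n$. By Theorem \ref{thm: bqf induces reduced BQF}, we may assume $Y$ is $\mathbb Q$-factorial and the model is reduced, i.e. $B_Y\geq g^{-1}_*(B+tD)+\Exc(g)$; in the QF case $g$ remains a morphism. Next, perturb. For $0<\epsilon\ll 1$, set
$$B_{Y,\epsilon}:=B_Y+\epsilon\, g^{-1}_*D.$$
Condition (1) of Definition \ref{defn: qdlt model} holds for $(X,B+(t+\epsilon)D)$. Condition (3) holds for small $\epsilon$ because ampleness over $Z$ is an open condition (for $\mathbb R$-Cartier $g^{-1}_*D$, which is automatic since $Y$ is $\mathbb Q$-factorial). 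Condition (4) is unchanged, since exceptional divisors and their coefficients do not change.

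The key point is condition (2) together with preservation of the dual complex. We need that $\Supp g^{-1}_*D$ contains no lc center of $(Y,B_Y)$. Then $(Y,B_{Y,\epsilon})$ stays qdlt for small $\epsilon$, with the same lc centers and $\mathcal D(Y,B_{Y,\epsilon})=\mathcal D(Y,B_Y)$, which gives $\SReg^{(\bir)}(X/Z,B+(t+\epsilon)D)\geq n$. This is where $t<\lct(X,B;D)$ enters.

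Suppose some lc center $V$ of $(Y,B_Y)$ lies in $\Supp g^{-1}_*D$. Take an lc place $E$ of $(Y,B_Y)$ with center $V$. As in the proof of Lemma \ref{lem: bscr obtained}, complete $(Y,B_Y)$ by $H_Y\sim_{\mathbb R,Z}-(K_Y+B_Y)$ general, and push down to obtain $\Delta\geq B+tD$ on $X$ crepant to $(Y,B_Y+H_Y)$. Then $E$ is an lc place of $(X,\Delta)$, so $a(E,X,B+tD)\geq a(E,X,\Delta)=0$. The main obstacle is to upgrade this to a contradiction, namely to show that $E$ being an lc place forces $\mult_E$ of the pullback of $D$ to vanish, unless $(X,B+t'D)$ fails to be lc for all $t'>t$.

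For this step, I would first compare on a common resolution: $p^*(K_Y+B_Y)\le q^*(K_X+B+tD)$ fails in general, but $p^*(K_Y+B_Y+H_Y)=q^*(K_X+\Delta)$. Hence $0=a(E,X,\Delta)\le a(E,X,B+tD)-\mult_E q^*(\Delta-B-tD)$. If $\Delta-B-tD$ has a component through $\Center_X E$, this is consistent. So instead, I would exploit the freedom in the choice of $H_Y$ (general, avoiding $V$) together with the fact that $g^{-1}_*D$ passes through $V$. This gives $a(E,Y,B_Y+s\,g^{-1}_*D)<0$ for every $s>0$, while crepancy with $X$ yields $a(E,X,B+(t+s)D)\le a(E,Y,B_Y+s\,g^{-1}_*D)<0$. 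That would contradict $t<\lct$. The inequality $a(E,X,\cdot)\le a(E,Y,\cdot)$ requires the negativity lemma, as in Lemma \ref{lem: negativity qag model}, applied to the QAG model attached to the reduced model via Lemma \ref{lem: bqf to qag}. Checking this inequality carefully is where I expect most of the work to lie.
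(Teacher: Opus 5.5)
You set up the right contradiction (perturb a $\mathbb Q$-factorial QF$^{(\bir)}$ model of $(X/Z,B+tD)$ into one of $(X/Z,B+(t+\epsilon)D)$). However, the step you call the key point fails. It is \emph{not} true that $\Supp g^{-1}_*D$ contains no lc center of $(Y,B_Y)$, even for reduced $\mathbb Q$-factorial models. Condition (1) only requires $B_Y\ge g^{-1}_*(B+tD)$, so $B_Y$ may have coefficient $1$ along a component of $D_Y:=g^{-1}_*D$, and that component is then itself an lc center. For example, take $X=Y=\mathbb P^d$ over a point, $B=0$, $D=H_0$, $0<t<1=\lct(X,0;H_0)$, $g=\id$, and $B_Y=H_0+\dots+H_{d-1}$ (coordinate hyperplanes). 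This is a reduced $\mathbb Q$-factorial QF model of $(X,tD)$ with $\reg(Y,B_Y)=d-1$, but $B_Y+\epsilon D_Y$ has coefficient $1+\epsilon$ along $H_0$.

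Your proposed route to a contradiction also cannot work, because the comparison inequality points the other way. Take a common resolution $p\colon W\to Y$, $q\colon W\to X$. Since $g$ extracts no divisor, $g_*B_Y\ge B+tD$. Since $-(K_Y+B_Y+sD_Y)$ is nef over $Z$, hence over $X$, the negativity lemma gives $p^*(K_Y+B_Y+sD_Y)\ge q^*(K_X+B+(t+s)D)$, i.e. $a(E,Y,B_Y+sD_Y)\le a(E,X,B+(t+s)D)$. So negative log discrepancies on $Y$ give no information on $X$. In the example, $E=H_0$ has $a=-s$ on $Y$ but $a=1-t-s>0$ on $X$.

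The missing idea is that you should not perturb along those components at all, and that $t<\lct$ is only needed on $X$, at the level of coefficients. Since $0<t<\lct(X,B;D)$, every component $P$ of $D$ has $0<\mult_P(B+tD)<1$. Let $P_Y:=g^{-1}_*P$.
If $\mult_{P_Y}B_Y=1$, this coefficient already dominates $\mult_P(B+(t+\epsilon)D)$ for $0<\epsilon\ll1$, so nothing needs to be added there.
Otherwise $\mult_{P_Y}B_Y\in(0,1)$, i.e. $P_Y\subset\Supp\{B_Y\}$. Such a component contains no lc center of the qdlt pair $(Y,B_Y)$: near the generic point of an lc center, a qdlt pair is toroidal with boundary equal to its reduced part, and a fractional component through the center would give the corresponding toric valuation negative log discrepancy.

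Now let $D_Y'$ be the part of $D_Y$ supported on $\Supp\{B_Y\}$. The pair $(Y,B_Y+\epsilon D_Y')$ is qdlt with the same lc centers and dual complex, and $-(K_Y+B_Y+\epsilon D_Y')$ is still ample over $Z$. Conditions (1) and (4) hold for $(X/Z,B+(t+\epsilon)D)$, which gives the contradiction.

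This is the paper's argument. It uses $t<\lct$ only to get $\Supp D\subset\Supp\{B+tD\}$, then perturbs along the fractional part of $B_Y$. Its assertion $\Supp D_Y\subset\Supp\{B_Y\}$ tacitly needs exactly the coefficient-$1$ caveat above. No comparison of log discrepancies between $X$ and $Y$ is needed.
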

\begin{proof}
We may assume that $t<\lct(X,B;D)$ but $\SRegt^{(\bir)}(X/Z,B;D)\geq n$. By Lemma \ref{lem: qfactorial bqf model}, there exists a QF$^{(\bir)}$ model $g\colon (Y,B_Y)\dashrightarrow (X,B+tD)$ of $(X/Z,B+tD)$ such that $Y$ is $\mathbb Q$-factorial and $\reg(Y,B_Y)\geq n$. Let $D_Y$ be the strict transform of $D$ on $Y$. Since $t<\lct(X,B;D)$, $\Supp D\subset\Supp\{B+tD\}$, hence 
$$\Supp D_Y\subset\Supp\{B_Y\}=\Supp\{B_Y+E\}.$$
Thus we may pick $0<\epsilon\ll 1$ such that $(Y,\Delta_Y:=B_Y+\epsilon D_Y)$ is qdlt and $-(K_Y+\Delta_Y)$ is ample$/Z$. We have that 
$$g\colon(Y,\Delta_Y)\dashrightarrow (X,B+(t+\epsilon)D)$$
is a QF$^{(\bir)}$ model of $(X/Z,B+(t+\epsilon)D)$, so
$$\SReg^{(\bir)}(X/Z,B+(t+\epsilon)D)\geq\reg(Y,\Delta_Y)\geq\reg(Y,B_Y)=n.$$
Thus $\SRegt^{(\bir)}_n(X/Z,B;D)\geq t+\epsilon$, a contradiction.
\end{proof}

The goal of this section is to prove the following theorem.

\begin{proof}[Proof of Theorem \ref{thm: acc scrt}]
Suppose that theorem does not hold. Then for any positive integer $i$, there exist a pair $(X_i/Z_i,B_i)$ of dimension $d$, an integer $n_i$ such that $0\leq n_i<d$, and an $\mathbb R$-Cartier $\mathbb R$-divisor $D_i$ on $X_i$, such that $B_i,D_i\in\Ii$, but
$$t_i:=\SRegt^{(\bir)}_n(X_i/Z_i,B_i;D_i)$$
is strictly increasing. Possibly passing to a subsequence, we may assume that $t_i\in (0,+\infty)$ for any $i$ and $n:=n_i$ is a constant. By the ACC for lc thresholds, \cite[Theorem 1.1]{HMX14}, possibly passing to a subsequence, we may assume that $t_i<\lct(X_i,B_i;D_i)$ for any $i$. Let $t:=\lim_{i\rightarrow+\infty}t_i$. Then we may pick a sequence of real numbers $\{s_i\}_{i=1}^{+\infty}$, such that $s_i$ is strictly increasing, $s_i<t_i$ for each $i$, and $\lim_{i\rightarrow+\infty}s_i=t$. Then
$$\SReg^{(\bir)}(X_i/Z_i,B_i+s_iD_i)\geq n$$
for any $i$. By Theorem \ref{thm: bqf induces reduced BQF}, for each $i$, there exists a reduced divisor $L_i$ on $X_i$ and a $\mathbb Q$-factorial reduced QF$^{(\bir)}$ model 
$$g_i\colon (Y_i,\Delta_{Y_i}:=B_{Y_i}+s_iD_{Y_i}+R_i)\dashrightarrow (X_i,B_i+s_iD_i)$$ 
of $(X_i/Z_i,B_i+s_iD_i)$ such that $Y_i$ is $\mathbb Q$-factorial and $\reg(Y_i,B_{Y_i})\geq n$, where $$B_{Y_i}:=g_{i,*}^{-1}(B_i\vee L_i)+E_i$$ for some reduced divisor $L_i$ on $X_i$ and $g_i$-exceptional reduced divisor $E_i$, $D_{Y_i}:=g_{i,*}^{-1}D_i$, and $R_i\geq 0$. We have that $(Y_i/Z_i,\Delta_{Y_i})$ is $\mathbb R$-complementary, hence $(Y_i/Z_i,B_{Y_i}+s_iD_{Y_i})$ is $\mathbb R$-complementary. By Lemma \ref{lem: QF is ft}, $Y_i/Z_i$ is of Fano type. By the ACC for $\mathbb R$-complementary thresholds \cite[Theorem 5.12]{HLS24}, possibly passing to a subsequence, we have that $(Y_i/Z_i,B_{Y_i}+tD_{Y_i})$ is $\mathbb R$-complementary. Let $$\left(Y_i/Z_i,B_{Y_i}^+:=B_{Y_i}+tD_{Y_i}+G_{i}\right)$$
be an $\mathbb R$-complement of $(Y_i/Z_i,B_{Y_i}+tD_{Y_i})$. Pick $0<u_i\ll t-t_i$. Consider
$$\left(Y_i/Z_i,C_{i}:=B_{Y_i}+((1-u_i)t+u_is_i)D_{Y_i}+(1-u_i)G_{i}+u_iR_i\right).$$
Then:
\begin{enumerate}
    \item $g_i$ does not extract any divisor and $C_i\geq g^{-1}_{i,*}(B_{i}+t_iD_i)$.
    \item Since 
    $$K_{Y_i}+C_i\sim_{\mathbb R,Z_i}(1-u_i)\left(K_{Y_i}+B_{Y_i}^+\right)+u_i(K_{Y_i}+\Delta_{Y_i})\sim_{\mathbb R,Z_i}u_i(K_{Y_i}+\Delta_{Y_i})$$
    is ample$/Z_i$. Moreover, any lc place of $(Y_i,C_i)$ is an lc place of $(Y_i,\Delta_{Y_i})$, hence $(Y_i,C_i)$ is qdlt.
    \item Any $g_i$-exceptional prime divisor $D$ such that $\mult_{D}C_i=0$ satisfies that $\mult_D\Delta_{Y_i}=0$, so $D$ does not pass through any lc center of $(Y_i,\Delta_{Y_i})$, and so $D$ does not pass through any lc center of $(Y_i,C_i)$.
\end{enumerate}
Therefore, 
$$g_i\colon (Y_i,C_i)\dashrightarrow (X_i,B_{i}+t_iD_i)$$
is a QF$^{(\bir)}$ model of $(X_i,B_i+t_iD_i)$. Thus
$$\SReg^{(\bir)}(X_i/Z_i,B_i+t_iD_i)\geq\reg(Y_i,C_i)=\reg(Y_i,\Delta_{Y_i})\geq n.$$
This contradicts Proposition \ref{prop: scrt is computed}.
\end{proof}

\end{document}